\newcommand{\A}{\mathrm{A}} \newcommand{\AGL}{\mathrm{AGL}}   \newcommand{\Aut}{\mathrm{Aut}}
 \newcommand{\bbF}{\mathbb{F}}
\newcommand{\Cen}{\mathbf{C}} \newcommand{\calB}{\mathcal{B}}   \newcommand{\calO}{\mathcal{O}}   \newcommand{\Co}{\mathrm{Co}} 
\newcommand{\D}{\mathrm{D}} 
\newcommand{\F}{\mathrm{F}} 
\newcommand{\G}{\mathrm{G}} \newcommand{\GaL}{\mathrm{\Gamma L}}    \newcommand{\GL}{\mathrm{GL}} \newcommand{\GO}{\mathrm{O}} \newcommand{\GU}{\mathrm{GU}}
 \newcommand{\Hol}{\mathrm{Hol}} \newcommand{\HS}{\mathrm{HS}}
\newcommand{\J}{\mathrm{J}}
\newcommand{\lefthat}{\scalebox{1.3}[1]{\text{$\hat{~}$}}}
\newcommand{\M}{\mathrm{M}} \newcommand{\magma}{\textsc{Magma}}
\newcommand{\N}{\mathrm{N}}
\newcommand{\Nor}{\mathbf{N}}
\newcommand{\Out}{\mathrm{Out}}
\newcommand{\Pa}{\mathrm{P}}  \newcommand{\PGL}{\mathrm{PGL}} \newcommand{\PGaL}{\mathrm{P\Gamma L}}   \newcommand{\ppd}{\mathrm{ppd}} \newcommand{\POm}{\mathrm{P\Omega}} \newcommand{\PSL}{\mathrm{PSL}} \newcommand{\PSiL}{\mathrm{P\Sigma L}}   \newcommand{\PSp}{\mathrm{PSp}} \newcommand{\PSU}{\mathrm{PSU}}
 \newcommand{\Rad}{\mathbf{R}}  
 \newcommand{\SL}{\mathrm{SL}}  \newcommand{\Soc}{\mathrm{Soc}} \newcommand{\Sp}{\mathrm{Sp}}  \newcommand{\SU}{\mathrm{SU}} \newcommand{\Suz}{\mathrm{Suz}} \newcommand{\Sy}{\mathrm{S}}  \newcommand{\Sz}{\mathrm{Sz}}
\newcommand{\Z}{\mathbf{Z}} \newcommand{\ZZ}{\mathrm{C}}
\newtheorem{theorem}{Theorem}[section]
\newtheorem{corollary}[theorem]{Corollary}
\newtheorem{lemma}[theorem]{Lemma}
\newtheorem{proposition}[theorem]{Proposition}
\newtheorem{problem}[theorem]{Problem}
\theoremstyle{definition}
\newtheorem{example}[theorem]{Example}
\newtheorem{hypothesis}[theorem]{Hypothesis}
\newtheorem*{remark}{Remark}
\numberwithin{equation}{section}
\numberwithin{table}{section}
\def\a{\alpha}\def\ov{\overline}
\def\calO{{\mathcal O}}
\def\sfM{{\sf M}}
\begin{document}

\title[Exact factorizations]{The Exact Factorizations of Almost Simple Groups}

\author{Cai Heng Li, Lei Wang, Binzhou Xia}
\address[Li]
{SUSTech International Center for Mathematics, and Department of Mathematics, Southern University of Science and Technology\\Shenzhen 518055, Guangdong\\P. R. China
\newline Email: {\tt lich@sustech.edu.cn}
}
\address[Wang]
{School of Mathematics and Statistics\\Yunnan University\\Kunming 650091, Yunnan\\P. R. China
\newline Email: {\tt wanglei@ynu.edu.cn}
}
\address[Xia]
{School of Mathematics and Statistics\\The University of Melbourne\\Parkville, VIC 3010\\Australia
\newline Email: {\tt binzhoux@unimelb.edu.au}
}

% \date{\today}

\begin{abstract}
This paper presents a classification of exact factorizations of almost simple groups, which has been a long-standing open problem initiated around 1980 by the work of Wiegold-Williamson, and significantly progressed by Liebeck, Praeger and Saxl in 2010.
%In fact, we slightly extend the classification for classical groups to the so-called near-exact factorizations.
The classification is then used to solve problems in bicrossproduct Hopf algebras and permutation groups.
%
%Characterizing primitive permutation groups $G$ containing a regular subgroup $H$ is a classical problem in permutation group theory.
%The problem has been solved for various important special cases, and among them a significant result achieved by Liebeck, Praeger and Saxl in 2010 solves the problem for the case where $G$ is primitive and almost simple.
%
%Praeger developed a theory for quasiprimitive permutation groups in 1992, which extended research scope of permutation group theory.
%Many problems in the applications of permutation groups can be reduced to the quasiprimitive case instead of the primitive case.
%, and thus quasiprimitive permutation groups have been extensively studied during the past 30 years.
%In this paper, we classify the pairs $H<G$ for two cases, namely, $G$ is almost simple, or $G$ is quasiprimitive and $H$ is simple, which extend the work by Liebeck, Praeger and Saxl in 2010 on the primitive case.
%One would expect that these results will have many applications, for instance, to Cayley graphs on simple groups in subsequent work.

%\vspace{5\baselineskip}

\vskip0.1in
\noindent {\it Key words and phrases}: \ almost simple groups; group factorizations; quasiprimitive permutation groups; regular subgroups;
\vskip0.1in

\noindent AMS Subject Classification (2020): 20D40, 20D06, 20D08

\end{abstract}

\maketitle

\section{Introduction}

An expression $G=HK$ of a group $G$ as the product of subgroups $H$ and $K$ is called a \emph{factorization} of $G$, where $H$ and $K$ are called \emph{factors}. A group factorization is said to be \emph{exact} if the intersection of the two factors is trivial.
Recall that a finite group $G$ is said to be \emph{almost simple} if $S\leqslant G\leqslant\Aut(S)$ for some nonabelian simple group $S$, where $S=\Soc(G)$ is the \emph{socle} of $G$.
The following is a long-standing problem in group theory.

\begin{problem}\label{PrbXia1}
Classify exact factorizations of finite almost simple groups.
\end{problem}

Indeed, Problem~\ref{PrbXia1} was initiated around 1980, when Wiegold and Williamson~\cite{WW1980} determined the exact factorizations of alternating groups and symmetric groups. A significant result for the problem was achieved in 2010 by Liebeck, Praeger and Saxl~\cite{LPS2010}, who determined exact factorizations of finite almost simple groups with one of the factors maximal.
Recently, Problem~\ref{PrbXia1} was solved in~\cite{LX2022} and~\cite{BL2021} in the case where one of the factors is solvable.
The problem is thus reduced to the case where both factors are nonsolvable, and a main result of this paper is to present a complete solution as in Theorem~\ref{ThmExact}. Our notation follows~\cite{LX2022}.

\begin{theorem}\label{ThmExact}
Let $G$ be a finite almost simple group, and let $H$ and $K$ be nonsolvable subgroups of $G$. Then $G=HK$ is an exact factorization if and only if $(G,H,K)$ lies in Table~$\ref{TabXia2}$ (with $H$ and $K$ possibly interchanged).
\end{theorem}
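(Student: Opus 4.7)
The plan is to prove Theorem~\ref{ThmExact} by a case analysis on the isomorphism type of $S=\Soc(G)$, afforded by the Classification of Finite Simple Groups. For $S$ an alternating group, the list of factorizations obtained by Wiegold and Williamson~\cite{WW1980} can simply be filtered to retain those with both factors nonsolvable. For $S$ sporadic, the problem reduces to a finite list of groups whose maximal subgroups and intersections are known, and the exact factorizations can be read off with a \magma\ computation. For $S$ an exceptional group of Lie type, the maximal factorizations leave only a very short list of candidate triples $(G,M,K)$ with $M$ maximal and nonsolvable; within each such $M$ the subgroup structure is well understood, so the nonmaximal factors $H\leqslant M$ producing an exact factorization can be pinned down directly.

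The core new case is $S$ a classical group. Here the strategy is to combine Theorem~\ref{ThmMain} and Corollary~\ref{CorMain} with the Liebeck--Praeger--Saxl memoir~\cite{LPS2010}. Outside the listed exceptions (socles $\Sp_6(q)$, $\Omega_7(q)$, or one of the $47$ small groups), Theorem~\ref{ThmMain} asserts that one factor, say $H$, is \emph{almost maximal}: there is a maximal subgroup $M\leqslant G$ such that $\Nor_G(H)$ equals, up to an automorphism of $S$, the stabilizer of a $1$-space, a $2$-space or an antiflag of the natural module. The exact factorization $G=HK$ then forces the maximal factorization $G=MK$, for which the admissible pairs $(M,K)$ are classified in~\cite{LPS2010}. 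For each such pair the induced factorization $M=H(M\cap K)$ with $H\cap(M\cap K)=1$ can be analysed using the well-understood subgroup structure of a parabolic or antiflag stabilizer; the order identity $|H|\cdot|K|=|G|$ typically forces one factor to be almost simple with socle as large as possible inside $M$ and the other to act reducibly on the natural module, and divisibility arguments using primitive prime divisors then eliminate most putative candidates, leaving only the triples eventually recorded in Table~\ref{TabXia2}.

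The hardest part, in my view, is not any single deduction but the organisation and completeness of this enumeration inside the classical case: one must establish that every pair $(M,K)$ arising in~\cite{LPS2010} admits only the $H$'s in Table~\ref{TabXia2} as exact complements, and conversely that each entry of the table genuinely yields an exact factorization with both factors nonsolvable and $H\cap K=1$. The excluded socles $\Sp_6(q)$ and $\Omega_7(q)$ are genuine outliers, producing extra triality-related factorizations that must be treated individually, and the $47$ small socles listed in Theorem~\ref{ThmMain} require case-by-case verification in \magma. Once these are settled, the alternating, sporadic, exceptional and classical contributions can be synthesised to confirm that Table~\ref{TabXia2} is exhaustive.
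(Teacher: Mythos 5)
Your overall architecture coincides with the paper's: alternating groups are handled through Wiegold--Williamson, sporadic groups through Giudici's classification, exceptional groups by checking $|G|=|H||K|$ against the Hering--Liebeck--Saxl factorizations (none survive), the classical case by feeding the structural classification of Theorem~\ref{ThmMain} and the chapter theorems into the exactness hypothesis, and the converse inclusion of each table entry by explicit constructions together with \magma. The difficulty is in how you propose to finish the classical case, and here your sketch has a genuine gap.

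Your elimination scheme --- the order identity $|H||K|=|G|$ together with primitive-prime-divisor divisibility applied to the induced factorization $M=H(M\cap K)$ --- is exactly the arithmetic filter that the paper already builds into part~(c) of Theorems~\ref{ThmLinear}--\ref{ThmOmegaPlus} via Lemma~\ref{LemXia4} (exactness forces $|H^{(\infty)}||K^{(\infty)}|$ to divide $|L|$). The candidates that survive this filter are precisely the ones your plan leaves unresolved, and they include three infinite families that satisfy \emph{every} such arithmetic constraint: $(\PSU_{2m}(q),\lefthat(P.\SL_2(q^m)),\SU_{2m-1}(q))$, $(\Sp_{2m}(q),P.\SL_2(q^{m/2}),\Omega_{2m}^-(q))$ and $(\Omega_{2m+1}(q),P.\SL_2(q^{m/2}),\Omega_{2m}^-(q))$; the paper explicitly remarks that these pass the divisibility test yet, apart from a handful of small groups, admit no exact factorization. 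Ruling them out requires structural rather than counting arguments: in the unitary case one pins down $|P|$ from $|H|_p=q^{n-1}$, notes that an element of order in $\ppd(q^{4b}-1)$ cannot act nontrivially on the small $p$-group $P$ and hence centralizes a nontrivial unipotent element, and contradicts the Jordan-form description of unitary centralizers; in the symplectic case one shows $P=1$, uses Borel--Tits to place $H$ in $\Pa_m$, and then, exploiting that $\Cen_G(\SL_2(q^{m/2}))\cong\SL_2(q^{m/2})$ has a single class of involutions, manufactures an involution lying in $H\cap K^{xy}$, contradicting exactness; in the odd-dimensional orthogonal case one uses the fixed-point-freeness of a $\ppd$-element on the unipotent radical of $\Pa_m$. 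None of these steps is delivered by ``one factor almost simple, the other reducible, plus ppd divisibility,'' so as written your plan would terminate with these families still standing. Two smaller points: the pairs $(M,K)$ you invoke are not classified in the regular-subgroup memoir (that treats one factor maximal \emph{and} the other regular; maximal factorizations with both factors maximal are the 1990 memoir), and the excluded socles $\Sp_6(q)$ and $\Omega_7(q)$ are exceptional because of $\G_2(q)$ factors, not triality, which only intervenes for $\POm_8^+(q)$.
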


\begin{table}[htbp]
\captionsetup{justification=centering}
\caption{Exact factorizations of almost simple groups with nonsolvable factors}\label{TabXia2}
\begin{tabular}{|l|l|l|l|l|}
\hline
Row~& $G$ & $H$ & $K$ & Remarks\\
\hline
1 & $\A_n.\calO$ & $[n].\calO_1$ & $\A_{n-1}.\calO_2$ & $H\cap\A_n=[n]$ if $\calO_1=2$\\
2 & $\A_{q+1}.\calO$ & $\PSL_2(q).(d\times\calO_1)$ & $\A_{q-2}.\calO_2$ & Example~\ref{ExaRow2}\\
3 & $\A_{q+1}.\calO$ & $\PSL_2(q)$ & $\A_{q-2}.(2\times\calO)$ & $q\equiv3\pmod{4}$\\
\hline
4 & $\A_9.\calO$ & $\PSL_2(8)$ & $(\A_5\times3).(2\times\calO)$ & $k=1$\\
5 & $\A_9.\calO$ & $\PGaL_2(8)$ & $\Sy_5\times\calO$ & $k=|\calO|$\\
6 & $\A_{11}.\calO$ & $\M_{11}$ & $\A_7.\calO$ & $k=3$ if $\calO=2$\\
7 & $\A_{12}.\calO$ & $\M_{12}$ & $\A_7.\calO$ & $k=3$ if $\calO=2$\\
8 & $\A_{33}.\calO$ & $\PGaL_2(32)$ & $(\A_{29}\times3).(2\times\calO)$ & \\
9 & $\A_{121}.\calO$ & $11^2{:}\SL_2(5)$ & $\A_{119}.\calO$ & $H$ sharply $2$-transitive\\
10 & $\A_{841}.\calO$ & $29^2{:}(\SL_2(5)\times7)$ & $\A_{839}.\calO$ & $H$ sharply $2$-transitive\\
11 & $\A_{3481}.\calO$ & $59^2{:}(\SL_2(5)\times29)$ & $\A_{3479}.\calO$ & $H$ sharply $2$-transitive\\
\hline
12 & $\SL_4(4).(2\times\calO)$ & $(\SL_2(16).4)\times\calO_1$ & $\SL_3(4).(2\times\calO_2)$ & Example~\ref{ExaRow12}\\
13 & $\SU_4(4).4$ & $\SL_2(16).4$ & $\SU_3(4).4$ & Example~\ref{ExaRow13}\\
14 & $\Sp_4(4).2$ & $\Sy_5$ & $\SL_2(16).4$ & $K$ maximal, $h=1$\\
15 & $\Sp_6(2)$ & $\Sy_5\times\calO$ & $\SU_3(3).(2/\calO)$ & Example~\ref{ExaRow15}\\
16 & $\Sp_6(2)$ & $2^4{:}\A_5$ & $\SL_2(8).3$ & $K$ maximal\\
17 & $\Sp_6(4).2$ & $\SL_2(16).4$ & $\G_2(4).2$ & $K$ maximal, $h=2$\\
18 & $\Sp_8(2)$ & $\Sy_5$ & $\GO_8^-(2)$ & $K$ maximal, $h=1$\\
19 & $\Omega_8^+(2).\calO$ & $2^4{:}\A_5.\calO_1$ & $\A_9.\calO_2$ & Example~\ref{ExaRow19}(a)\\
20 & $\Omega_8^+(2).\calO$ & $\Sy_5\times\calO_1$ & $\Sp_6(2)\times\calO_2$ & Example~\ref{ExaRow19}(b)\\
21 & $\Omega_8^+(4).(2\times\calO)$ & $(\SL_2(16).4)\times\calO_1$ & $(\Sp_6(4).2)\times\calO_2$ & Example~\ref{ExaRow21}\\
\hline
22 & $\M_{24}$ & $2^4{:}\A_7$ & $\PSL_2(23)$ & $K$ maximal\\
23 & $\M_{24}$ & $\PSiL_3(4)$ & $\PSL_2(23)$ & $K$ maximal\\
\hline
\end{tabular}

~\\
Symbols in this table:
{\small %$\calO\leqslant\mathrm{C}_2$,
$[n]$ denotes an unspecified group of order $n$;
$\calO_1,\calO_2\leqslant\calO\leqslant\ZZ_2$ with $|\calO|=|\calO_1||\calO_2|$;
$q\geqslant7$ is a prime power, and $d=(2,q-1)$.\
The parameter $k$ (or $h$) is the number of conjugacy classes of $K$ (or $H$) for each $H$ (or each $K$, respectively).\
%The parameter $h$ is the number of conjugacy classes of $H$ for each $K$.\
In row~1, $n\geqslant60$, $[n]$ is a regular permutation group on $n$ points;
in row~3, $H$ is $3$-homogeneous, and $k=1$ if $q=7$; in rows~9--11, $k=2$ if $\calO=2$.}
%The term $t$-tr is short for $t$-transitive.
\end{table}

A factorization of an almost simple group is called \emph{nontrivial} if both of the factors are core-free.
Obviously, an exact factorization of an almost simple group with nonsolvable factors is indeed nontrivial.
Determining all (nontrivial) factorizations of almost simple groups is a fundamental problem in the theory of simple groups.
The factorizations of almost simple exceptional groups of Lie type were classified by Hering, Liebeck and Saxl~\cite{HLS1987}
% \footnote{In part~(b) of Theorem~2 in~\cite{HLS1987}, $A_0$ can also be $\G_2(2)$, $\SU_3(3)\times2$, $\SL_3(4).2$ or $\SL_3(4).2^2$ besides $\G_2(2)\times2$.}
in 1987.
For the other families of almost simple groups, a landmark result was achieved by Liebeck, Praeger and Saxl~\cite{LPS1990} in 1990, which classifies the maximal (not necessarily exact) factorizations of almost simple groups. (A factorization is said to be \emph{maximal} if both the factors are maximal subgroups.)
Then factorizations of alternating and symmetric groups are classified in~\cite{LPS1990}, and factorizations of sporadic almost simple groups are classified in~\cite{Giudici2006}. Therefore, the essential case to deal with for Problem~\ref{PrbXia1} is that of classical groups.

Let $L$ be a finite simple group, and let $\sfM(L)$ be the Schur multiplier of $L$.
Then $\sfM(L)$ is explicitly determined (see \cite[Table~4.1]{Gorenstein1982}) and has few prime divisors. Let
\[\pi=\pi(\sfM(L)),\ \mbox{the set of prime divisors of the order $|\sfM(L)|$}.\]
A factorization $G=HK$ with $\Soc(G)=L$ is called {\it near-exact} if $H\cap K$ is a $\pi$-group.
Thus an exact factorization is a near-exact factorization.
%
%, and let $H$ and $K$ be subgroups of $H$ not containing $L$. If $G=HK$ such that $H\cap K$ is a $\pi$-group, then we call the factorization $G=HK$ a {\it near-exact} factorization.
We shall prove the following theorem.

\begin{theorem}\label{ThmNearExact}
Let $G$ be an almost simple classical group of Lie type, and let $G=HK$ be a nontrivial near-exact factorization with nonsolvable $H$ and $K$.
Then $\Soc(G)$ is one of:
\[
\PSL_3(4),\SL_4(2),\SL_4(4),\PSU_4(3),\SU_4(4),\Omega_7(3),\Omega_8^+(2),\Omega_8^+(4),
\Sp_4(4),\Sp_6(2),\Sp_6(4),\Sp_8(2).
\]
%Moreover, the triples $(G,H,K)$ are presented in Propositions~\emph{\ref{ThmLinear},~\ref{ThmUnitary},~\ref{ThmOmega},~\ref{ThmOmegaPlus},~\ref{ThmSymplectic}}.
\end{theorem}

\begin{remark}
Information on the factorizations $G=HK$ in Theorem~\ref{ThmNearExact} can be found in Propositions~\ref{ThmLinear}, \ref{ThmUnitary}, \ref{ThmOmega}, \ref{ThmOmegaPlus}, \ref{ThmSymplectic}.
\end{remark}

Although near-exact factorizations are a genuine extension of exact factorizations, the proof of Theorem~\ref{ThmNearExact} is much shorter than the proof of Theorem~\ref{ThmExact} in an earlier unpublished version of this paper~\cite{LWX}, which directly addresses exact factorizations.
This is the reason why we extend our work to classify near-exact factorizations.
%The somewhat surprising fact is that considering near-exact factorizations of classical groups enables us to use mathematical induction on this family of factorizations, and thus significantly shorten the proof.

% {\bf Remarks:}
%The general factorization problem for almost simple classical groups is still a challenge problem, although it has been solved for some special cases in the preprints \cite{LWX}.
%Relatively, the argument presented here for near-exact factorizations is a bit surprisingly short.

In 2000, Etingof, Gelaki, Guralnick and Saxl~\cite{EGGS2000} used the bicrossproduct Hopf algebra associated with the exact factorization $\M_{24}=(2^4{:}\A_7)\PSL_2(23)$ to give the first example of biperfect semisimple Hopf algebra of dimension greater than $1$, which yields a negative answer to~\cite[Question 7.5]{EG2000}. In fact, both factors of the factorization $\M_{24}=(2^4{:}\A_7)\PSL_2(23)$ are perfect and self-normalizing, and it was shown in~\cite[Theorem~2.3]{EGGS2000} that the bicrossproduct Hopf algebra associated with an exact factorization $G=HK$ is biperfect if and only if both $H$ and $K$ are perfect and self-normalizing. The authors in~\cite{EGGS2000} described the existence of such an exact factorization as ``amazing'', and suspected that it would be the only example among finite simple groups.
% This is confirmed by the following corollary
The following corollary of Theorem~\ref{ThmExact} confirms that this conjectured result is true.

\begin{corollary}\label{CorBiperfect}
The only exact factorization of finite simple group with both factors perfect and self-normalizing is $\M_{24}=(2^4{:}\A_7)\PSL_2(23)$.
\end{corollary}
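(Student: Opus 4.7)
The strategy is to invoke Theorem~\ref{ThmExact} and discard every row of Table~\ref{TabXia2} except Row~22. First note that the trivial subgroup is not self-normalizing in any nontrivial group, so the hypothesis forces both $H$ and $K$ to be nontrivial and hence nonsolvable (the only solvable perfect group being trivial). Moreover, $G$ cannot be abelian simple, as no such group admits a nontrivial perfect subgroup; hence $G$ is nonabelian simple, $G=\Soc(G)$, and in Table~\ref{TabXia2} we have $\calO=1$. The problem thereby reduces to a row-by-row inspection.

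A first pass eliminates most rows. Rows~12, 13, 14, 17, and~21 are discarded because $G$ has shape $L.2$ or $L.4$ and so is not simple. Rows~3, 4, 5, 8, 10, 11, 15, 16, 18, 20, and~23 are discarded because at least one listed factor has an explicit abelian quotient (for instance $\Sym_5$, $\PGaL_2(8)$, $\PSiL_3(4)$, or the $\C_7$ quotient of $\SL_2(5)\times 7$ appearing in Row~10), so that one factor is not perfect. In the remaining alternating cases of Rows~2, 6, 7, and~9, the factor $K$ is of the form $\A_{n-k}$ with $k\geqslant 2$ in its natural action on $n$ points, whose $\A_n$-normalizer contains the setwise stabilizer of the fixed $k$-set and is therefore strictly larger than $K$; so $K$ is not self-normalizing. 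For Row~1, although $K=\A_{n-1}$ is self-normalizing, the other factor $H=[n]$ is a regular subgroup of order $n\geqslant 60$; perfectness forces $H$ nonabelian, and the left regular copy of $H$ in $\A_n$ (which has the same cycle structure as, and therefore the same parity as, the right regular copy) centralizes $H$ but differs from it, giving $\Nor_{\A_n}(H)\supsetneq H$.

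This leaves Rows~19 and~22. In Row~19 one has $G=\Omega_8^+(2)$, $H=2^4{:}\A_5$, and $K=\A_9$: the factor $K$ is simple and maximal in $G$, hence perfect and self-normalizing, and $H$ is perfect because $\A_5$ acts nontrivially and irreducibly on $\bbF_2^4$, forcing $[H,H]\supseteq 2^4$ and so $H'=H$. However, $H$ is not maximal in $G$, and by the explicit construction in Example~\ref{ExaRow19}(a) it is properly contained in an overgroup isomorphic to $2^4{:}\Sym_5$ inside $G$ (the outer automorphism of $\A_5$ preserves the $\bbF_2[\A_5]$-module structure of $\bbF_2^4$), so $\Nor_G(H)\supsetneq H$ and Row~19 is excluded. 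Row~22 gives $G=\M_{24}$, $H=2^4{:}\A_7$, and $K=\PSL_2(23)$: both are maximal subgroups of $\M_{24}$, hence self-normalizing, and both are perfect ($\PSL_2(23)$ is simple, and $\A_7$ acts irreducibly on $\bbF_2^4$, giving $H=H'$). This is the unique example, proving the corollary.

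The main obstacle is Row~19: to rule out self-normalization there requires the concrete subgroup structure of $\Omega_8^+(2)$ in order to exhibit the overgroup $2^4{:}\Sym_5$ of $H$, for which Example~\ref{ExaRow19}(a) is the key input. All other cases are dispatched directly from the table by elementary considerations about abelian quotients, point stabilizers, and regular subgroups in alternating groups.
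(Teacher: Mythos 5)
Your overall route is the same as the paper's: reduce to Table~\ref{TabXia2} with $\calO=1$, eliminate most rows because a factor is visibly non-perfect or $G$ is not simple, and eliminate Row~1 and the alternating rows~2, 6, 7, 9 because a factor fails to be self-normalizing (your left-regular/centralizer argument for Row~1 is a clean substitute for the paper's holomorph order bound, and your treatment of $\A_{n-k}<\A_n$ matches the paper's). The step that does not hold up is Row~19, $G=\Omega_8^+(2)$, $H=2^4{:}\A_5$. Example~\ref{ExaRow19}(a) does not exhibit an overgroup $2^4{:}\Sy_5$ of $H$ inside $\Omega_8^+(2)$: the subgroups $2^4{:}\A_5.\calO_1$ with $\calO_1=2$ constructed there live in $\GO_8^+(2)=\Omega_8^+(2).2$, and since they satisfy $\GO_8^+(2)=(2^4{:}\A_5.2)\A_9$ with $\A_9\leqslant\Omega_8^+(2)$, those copies cannot be contained in $\Omega_8^+(2)$ at all. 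Your parenthetical justification --- that the outer automorphism of $\A_5$ acts $\bbF_2$-linearly on the module $2^4$ --- only yields an abstract extension $2^4{:}\Sy_5$ inside $\AGL_4(2)$; it does not show that the extending element can be chosen inside $\Omega_8^+(2)$, i.e.\ compatibly with the quadratic form and with the given embedding of $H$, and a priori it could land only in $\GO_8^+(2)\setminus\Omega_8^+(2)$, exactly as the example's copies do. So the non-self-normalization of $H$ in this case is asserted but not proved; since this is the decisive case, it is a genuine gap.

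The conclusion is nevertheless correct, and the paper closes it with concrete subgroup-structure data rather than a general argument: after isolating the perfect--perfect candidates in Theorem~\ref{ThmBiperfect}, it quotes from the Atlas that $\Nor_{\Omega_8^+(2)}(2^4{:}\A_5)=2^6{:}(3\times\A_5).2>H$, which immediately kills Row~19. Some input of this kind (the Atlas normalizer, or a direct computation in $\Omega_8^+(2)$) is what your argument needs at this point; with it, your remaining verifications --- including that both $2^4{:}\A_7$ and $\PSL_2(23)$ are perfect and maximal, hence self-normalizing, in $\M_{24}$ --- go through and agree with the paper's proof.
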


Permutation groups containing a regular subgroup form an important class of permutation groups which has been studied for a long time, dated back to Burnside in the 19th century. For a review of the literature and background of this topic, the reader is referred to the introduction of~\cite{LPS2010}.

A permutation group acting on a set is said to be \emph{primitive} if it does not preserve any nontrivial partition of the set, and is said to be \emph{quasiprimitive} if each of its nontrivial normal subgroups is transitive. Primitive groups are all quasiprimitive, but the converse statement is not true.
Liebeck, Praeger and Saxl in their memoirs~\cite{LPS2010} studied finite primitive permutation groups $G$ with a regular subgroup, and completely classified them under the assumption that $G$ is almost simple.
%In this paper, we extend their classification to a general almost simple group $G$, without assuming that $G$ is primitive.
%
%The problem of determining regular subgroups of a permutation group can be stated in the language of group factorizations.
%
%Another result in the above-mentioned memoirs of Liebeck, Praeger and Saxl is the classification of finite primitive groups with a regular almost simple subgroup~\cite[Theorem~1.4]{LPS2010}.
%This has a very nice consequence regarding Cayley graphs of finite simple groups.
As another application of Theorem~\ref{ThmExact}, we classify finite quasiprimitive permutation groups containing a regular simple subgroup.

\begin{theorem}\label{qp>simple-gps}
Let $X$ be a finite quasiprimitive group on $\Omega$, and let $L$ be a nonabelian simple regular subgroup of $X$.
Then one of the following holds:
\begin{enumerate}[{\rm (a)}]
\item $L\lhd X\leqslant\Aut(L)$;
\item $L\times L\lhd X\leqslant (L\times L).(\Out(L)\times\Sy_2)$;
\item $X$ is an almost simple group, and $X=LX_\a$ is an exact factorization, given in Table~$\ref{TabXia2}$ and~\cite[Theorem~3]{BL2021}.
\end{enumerate}
\end{theorem}

In particular, Theorem~\ref{qp>simple-gps} shows that a finite quasiprimitive permutation group of product action type does not contain a regular simple group.
Theorem~\ref{qp>simple-gps} contributes to the inclusion problem of quasiprimitive permutation groups, which was studied by Praeger and has been significantly progressed \cite{Baddeley-Praeger-2003,Praeger1990,Praeger2003}.

%In particular, Theorem~\ref{qp>simple-gps} shows that a finite quasiprimitive permutation group of product action type does not contain a regular simple group, and this theorem will be applied to study Cayley graphs of simple groups in subsequent work.

The layout of the paper is as follows. After this Introduction, we collect some preliminary results in Section~\ref{SecXia3}.
In Section~\ref{SecXia7} we prove Theorem~\ref{ThmNearExact} (the proof consists of Propositions~\ref{ThmLinear},~\ref{ThmUnitary},~\ref{ThmOmega},~\ref{ThmOmegaPlus} and \ref{ThmSymplectic}). Based on this, we prove Theorem~\ref{ThmExact} in Section~\ref{SecXia6}.
Then in Section~\ref{SecHopf} we apply Theorem~\ref{ThmExact} to Hopf algebras, proving Corollary~\ref{CorBiperfect}.
In the final section, we discuss transitive simple groups of quasiprimitive permutation groups and prove Theorem~\ref{qp>simple-gps}.

\section{Preliminaries}\label{SecXia3}

%This section presents notation and preliminary results that will be used later.
The basic group-theoretic notation in this paper is standard, see for example \cite{CCNPW1985} and \cite{LPS1990}.
A core-free subgroup $H$ of an almost simple group $G$ is said to be a \emph{factor} of $G$ if there exists a core-free supplement $K$ of $H$ in $G$.

Let $a$ and $b$ be positive integers. Denote the greatest common divisor of $a$ and $b$ by $(a,b)$. For a positive integer $n$ and a prime $p$, denote by $n_p$ the largest $p$-power dividing $n$, and denote $n_{p'}=n/n_p$.
If $\pi$ is a set of primes, then we denote $n_\pi=\prod_{p\in\pi}n_p$ and $n_{\pi'}=n/n_\pi$.
%For convenience, we say that $n$ is divisible by $a/b$ if and only if $nb$ is divisible by $a$. For example, $5$ is divisible by $5/2$ but not by $3/2$.

Let $a>1$ and $k>1$ be integers. A prime $r$ is called a \emph{primitive prime divisor} of the pair $(a,k)$ if $r$ divides $a^k-1$ but does not divide $a^j-1$ for any positive integer $j<k$. %We record the celebrated Zsigmondy's theorem here on the existence of primitive prime divisors.

\begin{theorem}[Zsigmondy~\cite{Zsigmondy1892}]
If $k\geqslant2$, then $(a,k)$ has a primitive prime divisor except for either $k=2$ and $a=2^e-1$ is a Mersenne prime, or $(a,k)=(2,6)$.
%Moreover, if $r$ is a primitive prime divisor of $(a,k)$ then $r>k$.
\end{theorem}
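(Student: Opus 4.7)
The plan is to use the cyclotomic factorization $a^k-1=\prod_{d\mid k}\Phi_d(a)$, where $\Phi_d$ denotes the $d$-th cyclotomic polynomial over $\mathbb{Z}$. A prime $r$ is a primitive prime divisor of $(a,k)$ if and only if the multiplicative order of $a$ modulo $r$ equals $k$, so the theorem is equivalent to showing that $\Phi_k(a)$ has a prime divisor $r$ with $r\nmid a^j-1$ for every $j<k$, outside the stated exceptions.

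The first step is the standard lemma: if $r$ is a prime divisor of $\Phi_k(a)$ that is \emph{not} primitive, then $r\mid k$; in fact $r$ is the largest prime factor of $k$ and the exact power of $r$ dividing $\Phi_k(a)$ is $r$ itself (so $\Phi_k(a)/r$ is a product of primitive prime divisors, if any). This is proved by writing $k=mr^s$ with $(m,r)=1$, using the identity $\Phi_{mr^s}(x)=\Phi_{mr}(x^{r^{s-1}})$ when $s\geqslant 1$, and analysing $\Phi_{mr}(a)\pmod{r^2}$ via the derivative of $x^{mr}-1$; equivalently one can use Lifting the Exponent.

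The second step is to bound $\Phi_k(a)$ from below. From the product formula $\Phi_k(a)=\prod_{\zeta}(a-\zeta)$ over primitive $k$-th roots of unity one gets $\Phi_k(a)\geqslant (a-1)^{\varphi(k)}$; a slightly sharper bookkeeping (pairing complex conjugate roots) yields $\Phi_k(a)>(a-1)^{\varphi(k)}$ strictly when $k\geqslant 3$. Combining this with Step~1, the absence of a primitive prime divisor forces $\Phi_k(a)\leqslant r$ where $r$ is the largest prime dividing $k$, hence $(a-1)^{\varphi(k)}\leqslant r\leqslant k$. Elementary estimates show this inequality fails whenever $\varphi(k)\geqslant 3$, or $\varphi(k)=2$ with $a\geqslant 3$, or $\varphi(k)=1$ with $a\geqslant 3$; so the only possible failures occur in a short explicit list of small pairs $(a,k)$.

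The last step is to verify the finite list of small cases by direct calculation. The surviving candidates are $k=2$ (giving $\Phi_2(a)=a+1$, which has a primitive prime divisor unless $a+1$ is a power of $2$, i.e.\ $a=2^e-1$ is a Mersenne prime), and $k\in\{3,4,6\}$ with $a=2$: here $\Phi_3(2)=7$, $\Phi_4(2)=5$ both yield primitive prime divisors, while $\Phi_6(2)=3$ divides $2^2-1$ and so fails, producing the exception $(a,k)=(2,6)$. I expect the conceptual ingredients (the divisibility lemma and the lower bound on $\Phi_k(a)$) to be routine; the main obstacle will be the careful case analysis that isolates exactly these two exceptions, in particular ensuring that in the Mersenne case every prime factor of $a+1=2^e$ really is non-primitive (trivially, since the only prime is~$2$ which divides $a-1\cdot\text{nothing}$, but one must check $2\mid a^1-1$ is impossible when $a$ is odd; the correct statement is that $2\mid a+1$ is not primitive because $2\mid a^0$-style arguments require a separate check, done by noting $k=2$ and $r=2$ gives order $1$, not $2$, of $a$ modulo $4$ when $a\equiv 3\pmod 4$).
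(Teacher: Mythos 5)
Your overall skeleton is the standard cyclotomic-polynomial proof of Zsigmondy (the paper itself gives no proof, it simply cites \cite{Zsigmondy1892}), and Steps 1 and 3 are essentially sound. The genuine gap is in Step 2: the lower bound $\Phi_k(a)>(a-1)^{\varphi(k)}$ is vacuous precisely when $a=2$, since then $(a-1)^{\varphi(k)}=1\leqslant r$ for every $k$, so the inequality you derive never ``fails'' and you obtain no contradiction for \emph{any} pair $(2,k)$. Your claim that the surviving candidates are only $k=2$ together with $(a,k)\in\{(2,3),(2,4),(2,6)\}$ therefore does not follow from what you proved; the case $a=2$ with $k$ large is exactly the delicate heart of the theorem (it is where the true exception $(2,6)$ lives), and as written your argument leaves it completely open. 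To close it you need a sharper estimate, e.g. $\Phi_k(2)\geqslant 2^{\varphi(k)}\prod_{j\geqslant1}(1-2^{-j})>2^{\varphi(k)-2}$ (obtained from $\Phi_k(a)=\prod_{d\mid k}(a^d-1)^{\mu(k/d)}$), which together with $\Phi_k(2)\leqslant r\leqslant k$ forces $2^{\varphi(k)-2}<k$, bounds $k$ explicitly, and reduces the matter to a finite verification isolating $(2,6)$; a similar refinement cleans up small $a\geqslant3$ if you insist on comparing with $k$ rather than with $r$.

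Two smaller points. First, the lemma in Step 1 as stated (``the exact power of $r$ dividing $\Phi_k(a)$ is $r$ itself'') is false for $k=2$: take $a=7$, where $\Phi_2(7)=8$ and $2$ is non-primitive. This is harmless only because you treat $k=2$ separately, but you should state the lemma for $k\geqslant3$ (or record the exception) so that the deduction $\Phi_k(a)\leqslant r$ is invoked only where it is valid. Second, your closing parenthetical about the Mersenne case is garbled; the relevant fact is simply that for odd $a$ the prime $2$ divides $a-1$ and hence is never a primitive prime divisor for $k=2$, so a primitive prime divisor exists exactly when $a+1$ has an odd prime factor, i.e.\ unless $a+1$ is a power of $2$ --- note the correct exception is $a=2^e-1$ as a Mersenne \emph{number}, primality of $a$ being irrelevant despite the paper's phrasing.
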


If $a$ is prime and $(a,k)\neq(2,6)$, then a primitive prime divisor of the pair $(a,k)$ is simply called a \emph{primitive prime divisor} of $a^k-1$, and we let
\[
\ppd(a^k-1)\text{ denote the set of primitive prime divisors of }(a,k).
\]
Note that our $\ppd$ function is defined on integers that equal a prime power minus $1$.
For example, $\ppd(3^6-1)=\ppd(9^3-1)=\{7\}$, which is different from the set $\{7,13\}$ of primitive prime divisors of the pair $(9,3)$. For convenience we let
\[
\ppd(63)=\{7\}.
\]
An integer is said to be divisible by or coprime to $\ppd(p^b-1)$ if it is divisible by or coprime to all the primes in $\ppd(p^b-1)$, respectively. %and that an integer is coprime to $\ppd(p^b-1)$ if it is coprime to all the primes in $\ppd(p^b-1)$.

%For a prime number $p$ and an integer $b\geqslant3$, we call a primitive prime divisor of the pair $(p,b)$ simply a \emph{primitive prime divisor} of the number $p^b-1$. By Zsigmondy's theorem, $p^b-1$ has a primitive prime divisor whenever $p^b-1\neq63$. Let $\ppd(p^b-1)$ denote the set of primitive prime divisors of $p^b-1$ if $p^b-1\neq63$, and let $\ppd(63)=\{7\}$. We say that an integer is divisible by $\ppd(p^b-1)$ if it is divisible by all the primes in $\ppd(p^b-1)$, and that an integer is coprime to $\ppd(p^b-1)$ if it is coprime to all the primes in $\ppd(p^b-1)$.

Checking the orders of outer automorphism groups of finite simple groups leads to the following conclusion (see~\cite[Page~38, Proposition~B]{LPS1990}):

\begin{lemma}\label{LemXia6}
Let $L$ be a simple group of Lie type over $\bbF_q$, and let $k\geqslant3$. If $(q,k)\neq(2,6)$, then none of the primitive prime divisors of $(q,k)$ divides $|\Out(L)|$.
\end{lemma}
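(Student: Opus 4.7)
The plan is to verify the conclusion by a case-by-case analysis using the standard structure of the outer automorphism group $\Out(L)$ of a finite simple group of Lie type. Write $q=p^f$ where $p$ is the defining characteristic.

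First I would extract the arithmetic consequences of the hypothesis. Since $r$ is a primitive prime divisor of $(q,k)$, the multiplicative order of $q$ modulo $r$ equals $k$, so $k\mid r-1$, giving $r\geqslant k+1\geqslant 4$ and hence $r\geqslant 5$. Moreover $r\nmid q^j-1$ for every $1\leqslant j<k$; in particular $r\nmid q-1$ and, using $q+1\mid q^2-1$ together with $k\geqslant 3$, also $r\nmid q+1$.

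Next I would invoke the standard decomposition $\Out(L)=D\cdot F\cdot G$ into diagonal, field, and graph automorphism subgroups, and aim to show that $r$ divides none of $|D|,|F|,|G|$. For the diagonal part: in every family $|D|$ divides one of $(n,q-1)$, $(n,q+1)$, $(2,q-1)$, or $(4,q^m\pm1)$, so $|D|$ divides $q-1$, $q+1$, or is at most $4$; combined with $r\geqslant 5$ and the preliminary coprimality facts this yields $r\nmid|D|$. For the graph part: $|G|\leqslant 2$ in all families except $\POm_8^+$, where the triality of $D_4$ gives $|G|=6$; in every case the prime divisors of $|G|$ lie in $\{2,3\}$, both less than $r$, so $r\nmid|G|$.

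The main step, and the principal obstacle, is handling the field automorphism part and showing $r\nmid|F|$, equivalently $r\nmid f$ (any extra factor of $2$ arising in $|F|$ for unitary or triality extensions is killed by $r\geqslant 5$ being odd). This is the arithmetic heart of the argument: writing $d$ for the multiplicative order of $p$ modulo $r$, we have $d\mid r-1<r$ and $d\mid fk$, while the primitivity hypothesis translates to $d/\gcd(d,f)=k$. Assuming $r\mid f$ toward a contradiction, one analyses this system and invokes Zsigmondy's theorem to funnel the only surviving possibility into $(q,k)=(2,6)$, the case excluded by hypothesis. The diagonal and graph components are controlled by crude size bounds, but the field component demands a finer accounting of how the multiplicative orders of $p$ and $q$ modulo $r$ interact, and it is precisely here that the $(q,k)=(2,6)$ exception plays its essential role.
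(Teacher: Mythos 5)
Your reduction of $\Out(L)$ into diagonal, field and graph parts is fine as far as it goes: $r\geqslant5$ together with $r\nmid q-1$ and $r\nmid q+1$ does dispose of the diagonal and graph contributions. The gap is exactly the step you defer to the end, namely showing $r\nmid f$ where $q=p^f$. This step cannot be closed, because with the lemma read literally (primitivity of $r$ with respect to the base $q$) that claim is simply false when $q$ is a proper power of $p$: take $q=32=2^5$ and $k=4$; then $5$ divides $32^4-1$ but none of $32^j-1$ for $j<4$, so $5$ is a primitive prime divisor of the pair $(32,4)$, yet $5=f$ divides $|\Out(\PSL_2(32))|=5$. So "assume $r\mid f$ and derive a contradiction" is not a finer arithmetic analysis waiting to be carried out — there is no contradiction to be had, and Zsigmondy's theorem does not rescue it. Your reading of the role of $(q,k)=(2,6)$ is also off: that pair is excluded because it is the Zsigmondy exception in which no primitive prime divisor of $(q,k)$ exists at all, not because it is a surviving case of the field-automorphism analysis.

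What makes the assertion work (and is how the cited result of Liebeck--Praeger--Saxl \cite{LPS1990}, and the applications in this paper, are meant to be used) is primitivity with respect to the characteristic: if $r$ is a primitive prime divisor of $p^{fk}-1$, then $r\equiv1\pmod{fk}$, hence $r>fk\geqslant f$, and the field part is killed at once; your treatment of the diagonal and graph parts then finishes the argument. The paper itself gives no argument beyond "checking the orders of the outer automorphism groups" and the citation, so there is no detailed proof to compare with step by step; but as written, your proposal rests at its self-declared main step on a claim that is unproved and, in the stated generality, unprovable.
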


For a group $X$, let $\Rad(X)$ denote the \emph{solvable radical} of $X$, which is the largest solvable normal subgroup of $X$.
For an almost simple group $G$ with socle $L$ and $X<G$, the radical $\Rad(X\cap L)=\Rad(X)\cap L$, and so $|\Rad(X)|/|\Rad(X\cap L)|=|\Rad(X)L/L|$ divides $|\Out(L)|$.

If $G=HK$ is a group factorization and $A$ and $B$ are subgroups of $G$ containing $H$ and $K$ respectively, then
\[
A=H(A\cap B)\quad\text{and}\quad B=(A\cap B)K.
\]
This observation will be useful especially when $G$ is an almost simple group with maximal subgroups $A$ and $B$.
For a maximal factorization $G=AB$ (given in \cite[Theorem~A]{LPS1990}), by Lemma~\ref{LemXia6}, inspecting $|\Rad(A\cap L)|$ and $|\Rad(B\cap L)|$ leads to the next lemma.
%for the triple $(L,A\cap L,B\cap L)$, where $G=AB$ is a maximal factorization (see~\cite[Theorem~A]{LPS1990}).

\begin{lemma}{\rm(\cite[Proposition~2.17]{LX2022})}\label{LemXia7}
Let $G$ be an almost simple classical group of Lie type over $\bbF_q$, and let $G=AB$ be a maximal factorization, with both $A$ and $B$ nonsolvable.
\begin{enumerate}[{\rm(a)}]
\item For $X=A$ or $B$, if $X$ has exactly one nonsolvable composition factor, then $X/\Rad(X)$ is almost simple, and $(A\cap B)\Rad(X)/\Rad(X)$ is core-free in $X/\Rad(X)$.
%If $A$ has exactly one nonsolvable composition factor, then $A/\Rad(A)$ is almost simple, and $(A\cap B)\Rad(A)/\Rad(A)$ is core-free in $A/\Rad(A)$.
%Similarly, if $B$ has exactly one nonsolvable composition factor, then $B/\Rad(B)$ is almost simple, and $(A\cap B)\Rad(B)/\Rad(B)$ is core-free in $B/\Rad(B)$.

\item If $k\geqslant3$ and $(q,k)\neq(2,6)$, then no primitive prime divisor of $(q,k)$ divides $|\Rad(A)||\Rad(B)|$.

\end{enumerate}
\end{lemma}

%If $G$ is an almost simple group with socle $L$ and $X$ is a subgroup of $G$, then $\Rad(X\cap L)=\Rad(X)\cap L$, and so $|\Rad(X)|/|\Rad(X\cap L)|=|\Rad(X)L/L|$ divides $|\Out(L)|$.
%Then, by Lemma~\ref{LemXia6}, we obtain the following lemma by checking $|\Rad(A\cap L)|$ and $|\Rad(B\cap L)|$ for the triple $(L,A\cap L,B\cap L)$, where $G=AB$ is a maximal factorization (see~\cite[Theorem~A]{LPS1990}).

%\begin{lemma}\label{LemXia22}
%Let $G$ be an almost simple group with socle $L$, where $L$ is a simple group of Lie type over $\bbF_q$, and let $G=AB$ be a maximal factorization with nonsolvable  $A$ and $B$.
%If $k\geqslant3$ and $(q,k)\neq(2,6)$, then no primitive prime divisor of $(q,k)$ divides $|\Rad(A)|$ or $|\Rad(B)|$.
%\end{lemma}

The above two lemmas will be used repeatedly and implicitly in this paper.

\begin{lemma}\label{LemXia5}
Let $G=HK$ be an almost simple group with socle $L$. Then there exist a group $G^*$ with $L\lhd G^*\leqslant G$, $H^*\leqslant H$ and $K^*\leqslant K$ such that $G^*=H^*K^*$, $H^*\cap L=H\cap L$, $K^*\cap L=K\cap L$ and
\[
H^*L=K^*L=G^*
\]
In particular, if $H\cap K$ is a $\pi$-group, then so is $H^*\cap K^*$.
\end{lemma}

\begin{proof}
Take $G^*=HL\cap KL$, $H^*=H\cap G^*$ and $K^*=K\cap G^*$. Then $L\trianglelefteq G^*\leqslant G$, $H^*\leqslant H$, $K^*\leqslant K$, $H^*\cap L=H\cap L$ and $K^*\cap L=K\cap L$. Since $H$ and $K$ are both core-free in $G$, it follows that $H^*$ and $K^*$ are both core-free in $G^*$. Moreover, it is easy to verify that $G^*=H^*K^*$ and $H^*L=K^*L=G^*$ (see for instance~\cite[Lemma~2(i)]{LPS1996}).
\end{proof}

To exclude some candidates for factorizations $G=HK$, Lemma~\ref{LemXia5} enables us to assume
\begin{equation}\label{EqnXia6}
HL=KL=G.
\end{equation}
%Moreover, since Lemma~\ref{LemXia5} asserts that $H^*\leqslant H$ and $K^*\leqslant K$, we have that, if $H\cap K$ is a $\pi$-group, then so is $H^*\cap K^*$.
Under this assumption, the next lemma enables us to embed a near-exact factorization into a maximal factorization.

\begin{lemma}\label{LemXia27}
Let $G$ be an almost simple group with socle $L$, and let $H$ and $K$ be core-free subgroups of $G$ such that $HL=KL=G$. Then there exist core-free maximal subgroups $A$ and $B$ of $G$ such that $H\leqslant A$ and $K\leqslant B$.
\end{lemma}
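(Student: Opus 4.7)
The plan is to exploit the well-known fact that in an almost simple group $G$ with socle $L$, the only nontrivial normal subgroups of $G$ contain $L$, so a subgroup of $G$ is core-free precisely when it does not contain $L$. Once this characterization is available, the existence of the required maximal over-groups $A\supseteq H$ and $B\supseteq K$ is immediate, because the hypothesis $HL=KL=G$ is inherited by any over-group.

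First I would verify the normal-subgroup structure. For any nontrivial normal subgroup $N\lhd G$, the intersection $N\cap L$ is normal in $L$, so simplicity of $L$ forces $N\cap L\in\{1,L\}$. If $N\cap L=1$, then $[N,L]\leqslant N\cap L=1$, so $N\leqslant\Cen_G(L)$; but in an almost simple group $\Cen_G(L)=1$, giving $N=1$, a contradiction. Hence every nontrivial normal subgroup of $G$ contains $L$. Consequently, for any subgroup $M\leqslant G$, the normal core of $M$ in $G$ is trivial if and only if $L\not\leqslant M$, i.e.\ if and only if $ML>M$.

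Second, since $H$ is core-free we have $H<G$, and because $G$ is finite we may choose a maximal subgroup $A$ of $G$ with $H\leqslant A<G$. From $HL=G$ and $H\leqslant A$ we get $AL\supseteq HL=G$, hence $AL=G$; in particular $A\ne AL$, so $L\not\leqslant A$, and by the characterization above $A$ is core-free. The identical argument, applied with $K$ in place of $H$, produces a core-free maximal subgroup $B$ of $G$ with $K\leqslant B$.

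There is no real obstacle here: the lemma is essentially a bookkeeping consequence of the description of normal subgroups of almost simple groups together with the trivial observation that $ML=G$ is preserved under enlarging $M$. The only point worth stating carefully is the equivalence between being core-free and not containing $L$, which is the content of the first paragraph of the plan.
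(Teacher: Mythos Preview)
Your argument is correct and follows the same route as the paper: pick any maximal subgroup $A\geqslant H$ and observe that $A\geqslant L$ would force $A\geqslant HL=G$, so $A$ is core-free (and likewise for $B$). The only difference is that you spell out the equivalence ``core-free $\Leftrightarrow$ $L\not\leqslant M$'' in detail, whereas the paper simply asserts it.
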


\begin{proof}
Let $A$ be a maximal subgroup of $G$ containing $H$.
If $A\geqslant L$, then $A\geqslant HL=G$ since $H\leqslant A$, a contradiction.
Hence $A$ is core-free in $G$.
Similarly, any maximal subgroup $B$ of $G$ containing $K$ is core-free in $G$.
\end{proof}

As usual, for a finite group $X$, we denote by $X^{(\infty)}$ the smallest normal subgroup of $X$ such that $X/X^{(\infty)}$ is solvable.
Note that $H^{(\infty)}=(H\cap L)^{(\infty)}$ and $K^{(\infty)}=(K\cap L)^{(\infty)}$.

\begin{lemma}\label{LemXia4}
Let $G$ be an almost simple group with socle $L$, and let $G=HK$ be a factorization with $HL=KL=G$. Then $|L||H\cap K|=|G/L||H\cap L||K\cap L|$. In particular, if $H\cap K$ is a $\pi$-group for some set $\pi$ of primes, then $|H^{(\infty)}|_{\pi'}|K^{(\infty)}|_{\pi'}$ divides $|L|_{\pi'}$.
\end{lemma}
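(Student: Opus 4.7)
The plan is to obtain the identity $|L||H\cap K|=|G/L||H\cap L||K\cap L|$ as a direct consequence of the standard product formula $|G||H\cap K|=|H||K|$ from Lemma~\ref{LemXia2}(c), using the hypothesis $HL=KL=G$ to relate $|H|$ and $|K|$ to $|H\cap L|$ and $|K\cap L|$, and then to deduce the divisibility statement for $|H^{(\infty)}||K^{(\infty)}|$ by routine index chasing.

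More explicitly, I would first apply the second isomorphism theorem: since $HL=G$, we have $H/(H\cap L)\cong HL/L=G/L$, so $|H|=|G/L|\cdot|H\cap L|$, and likewise $|K|=|G/L|\cdot|K\cap L|$ from $KL=G$. Combining this with Lemma~\ref{LemXia2}(c) gives
\[
|G||H\cap K|=|H||K|=|G/L|^2|H\cap L||K\cap L|,
\]
and dividing through by $|G/L|$, while using $|G|=|G/L|\cdot|L|$, yields the desired identity.

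For the second assertion, setting $|H\cap K|=1$ forces $|L|=|G/L||H\cap L||K\cap L|$, so in particular $|H\cap L||K\cap L|$ divides $|L|$. As noted immediately before the lemma, $H^{(\infty)}=(H\cap L)^{(\infty)}\leqslant H\cap L$ and similarly $K^{(\infty)}\leqslant K\cap L$. Hence $|H^{(\infty)}|\cdot|K^{(\infty)}|$ divides $|H\cap L|\cdot|K\cap L|$, and therefore divides $|L|$.

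I do not anticipate any substantial obstacle, as the statement is essentially a bookkeeping lemma; the only point of subtlety is to notice that the factor $|G/L|$ appears with multiplicity two in the product $|H||K|$ but only once in $|G|$, so it survives on the right-hand side after cancellation.
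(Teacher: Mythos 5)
Your proof is correct and follows essentially the same route as the paper: the identity $|H|=|G/L||H\cap L|$ you obtain from the second isomorphism theorem is exactly the product formula $|H||L|=|G||H\cap L|$ the paper extracts from Lemma~\ref{LemXia2} applied to $G=HL$ (and similarly for $K$), and the remaining manipulations and the final divisibility argument via $H^{(\infty)}\leqslant H\cap L$, $K^{(\infty)}\leqslant K\cap L$ coincide with the paper's.
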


\begin{proof}
We deduce from $G=HK=HL=KL$ that
\begin{align*}
|G||H\cap K|=|H||K|,\\
|H||L|=|G||H\cap L|,\\
|K||L|=|G||K\cap L|.
\end{align*}
Multiplying the three equations together on both sides, we obtain
\[
|L|^2|H\cap K|=|G||H\cap L||K\cap L|
\]
and thus $|L||H\cap K|=|G/L||H\cap L||K\cap L|$.
\end{proof}

Finally, we introduce some notation regarding classical groups acting on subspaces.

Let $T$ be a classical linear group on $V$ with center $Z$ such that $T/Z$ is a classical simple group, and let $S$ be a subgroup of $\GaL(V)$ containing $T$ as a normal subgroup. Then for a subgroup $X$ of $S$, denote by $\lefthat X$ the subgroup $(X\cap T)Z/Z$ of $T/Z$.

%Our notation follows~\cite{LX} if there is no further instruction.
Let $G$ be a classical group defined on a vector space $V$. If $G$ is a linear group, define $\Pa_i[G]$ to be the stabilizer in $G$ of an $i$-space in $V$. Now suppose that $G$ is a symplectic, unitary or orthogonal group. If $G$ is not a plus type orthogonal group of dimension $2i$ or $2i+2$, then define $\Pa_i[G]$ to be the stabilizer of a totally singular $i$-space in $G$. If $G$ is a plus type orthogonal group of dimension $2i$, then $G^{(\infty)}$ has precisely two orbits on the set of totally singular $i$-spaces and define $\Pa_{i-1}[G]$ and $\Pa_i[G]$ to be the stabilizers in $G$ of totally singular $i$-spaces in the two different orbits. Let $W$ be a nondegenerate $i$-space in $V$, and
\begin{enumerate}[{\rm (i)}]
\item[(i)] $\N_i[G]=G_W$ if $G$ is symplectic, unitary, or orthogonal of even dimension with $i$ odd;
\item[(ii)] $\N_i^\varepsilon[G]=G_W$ for $\varepsilon\in\{+,-\}$ if $G$ is orthogonal and $W$ has type $\varepsilon$;
\item[(iii)] $\N_i^\varepsilon[G]=G_W$ for $\varepsilon\in\{+,-\}$ if $G$ is orthogonal of odd dimension and $W^\perp$ has type $\varepsilon$.
\end{enumerate}
For the above defined groups $\Pa_i[G]$, $\N_i[G]$, $\N_i^+[G]$ and $\N_i^-[G]$, we will simply write $\Pa_i$, $\N_i$, $\N_i^+$ and $\N_i^-$, respectively, if the group $G$ is clear from the context.

For a classical group $G$, let $\mathcal{P}_i[G]$, $\mathcal{N}_i[G]$, $\mathcal{N}_i^+[G]$ and $\mathcal{N}_i^-[G]$ be the set of right cosets of $\Pa_i[G]$, $\N_i[G]$, $\N_i^+[G]$ and $\N_i^-[G]$, respectively, in $G$. Note that these sets of right cosets can be identified with the set of certain subspaces. For example, $\mathcal{P}_1[\PSL_n(q)]$ is the set of $1$-spaces of a vector space of dimension $n$ over $\bbF_q$, and $\mathcal{N}_1[\PSU_n(q)]$ is the set of nonsingular $1$-spaces of a unitary space of dimension $n$ over $\bbF_{q^2}$.

%{\color{red}
%The subgroups of $\GaL_n(q)$ that are transitive on the set of $1$-spaces were classified by Hering (see~\cite[Lemma~3.1]{LPS2010}). An \emph{antiflag} of a vector space $V$ is an unordered pair $\{U,W\}$, where $U$ is a $1$-space in $V$ and $W$ is a hyperplane in $V$ not containing $U$. The antiflag-transitive subgroups of $\GaL_n(q)$ were classified by Cameron and Kantor~\cite{CK1979} (see~\cite{Kantor} for a correction), and in this paper we will adopt a slightly different version of the classification from~\cite[Theorem~3.2]{LPS2010}. For some other classical groups $G$, subgroups of $G$ that are transitive on a $G$-orbit of certain subspaces are described in~\cite[Chapter~4]{LPS2010}, which will used in this paper. Recently, the work in~\cite[Chapter~4]{LPS2010} has been extended by Giudici, Glasby and Praeger~\cite{GGP}.
%}

\section{Near-exact factorizations of classical groups}\label{SecXia7}

In this section, we prove Theorem~\ref{ThmNearExact}.
For convenience, we make a hypothesis.
%
%\begin{theorem}\label{thm:Near-exact-F}
%Let $G$ be an almost simple classical group of Lie type, and let $G=HK$ be a nontrivial near-exact factorization with nonsovable $H$ and $K$.
%Then $\Soc(G)$ is one of:
%\[
%\PSL_3(4),\,\PSL_4(2),\,\PSU_4(3),\,\SU_4(4),\,\Omega_7(3),\,\Omega_8^+(2),\,\Omega_8^+(4),\,\Sp_4(4),\,\Sp_6(2),\,\Sp_6(4),\,\Sp_8(2).
%\]
%Moreover, the triples $(G,H,K)$ are shown in Propositions~\emph{\ref{ThmLinear},~\ref{ThmUnitary},~\ref{ThmOmega},~\ref{ThmOmegaPlus},~\ref{ThmSymplectic}}.
%\end{theorem}
%

%The proof of this theorem will be given by the following subsections according to different socles of $G$.
%As mentioned in the Introduction, we will apply mathematical induction in the proof.
%Thus we make the following hypothesis, which will enable us to finish the proof by contradiction in some cases.

\begin{hypothesis}\label{hypo}
Let $G$ be an almost simple classical group with socle $L$, and let $\pi=\pi(\sfM(L))$, where $\sfM(L)$ is the Schur multiplier of $L$.
Let $G=HK$ be a near-exact factorization with nonsolvable $H$ and $K$, namely, $H\cap K$ is a $\pi$-group.
Suppose that $G$ is a minimal (with respect to order) counterexample to the statement of Theorem~\ref{ThmNearExact}.
\end{hypothesis}

This hypothesis serves as our inductive hypothesis since every near-exact factorization of an almost simple classical group of order smaller than $|G|$ satisfies Theorem~\ref{ThmNearExact}.

% \begin{lemma}\label{recursive}
% Under Hypothesis~$\ref{hypo}$, the following statements hold:if , then $L$ is not a linear group or an even-dimenstional orthogonal group, then one of the following holds (with $A$ and $B$ interchanged if necessary):
% \begin{enumerate}[{\rm (a)}]
% \item
% if $\pi(\Rad(A))\subseteq\pi(\sfM(\Soc(A/\Rad(A))))$, then letting $\overline{\phantom{x}}$ denote the quotient modulo $\Rad(A)$, either $H^{(\infty)}=A^{(\infty)}$ or $\overline{A}=\overline{H}\,\overline{A_\delta}$ is a near-exact factorization satisfying the conclusion of Theorem~$\ref{ThmNearExact}$;
% \item
% if $\pi(\Rad(B))\subseteq\pi(\sfM(\Soc(B/\Rad(B))))$, then letting $\overline{\phantom{x}}$ denote the quotient modulo $\Rad(B)$, either $K^{(\infty)}=B^{(\infty)}$ or $\overline{B}=\overline{K}\,\overline{B_\omega}$ is a near-exact factorization satisfying the conclusion of Theorem~$\ref{ThmNearExact}$.
% \end{enumerate}
% \end{lemma}

% \begin{proof}
% Under Hypothesis~$\ref{hypo}$, $G$ is a minimal counterexample to Theorem~$\ref{ThmNearExact}$.
% Then both $H$ and $K$ are nonsolvable.

% If $H^{(\infty)}\not=A^{(\infty)}$, then $A=HA_\delta$ is a near-exact factorization, and satisfies Theorem~$\ref{ThmNearExact}$.
% Thus either $A_\delta$ is solvable and $H$ is given in \cite[Tables\,1.1-1.2]{LX}, or
% $(A,H,A_\delta)$ lies in Table~\ref{tab:near-exact-F}.

% Similar statements hold for part~(c) for $B=KB_\omega$.
% \end{proof}

\subsection{Linear groups}\label{SecXia1}
\ \vspace{1mm}

Let $L=\Soc(G)=\PSL_n(q)$, where $n\geqslant2$ and $(n,q)\not=(2,2)$ or $(2,3)$.
Then the Schur multiplier $\sfM(L)$ is $\ZZ_{(n,q-1)}$, with the following exceptions:
\[
\begin{array}{c|ccccc}
L & \PSL_2(4) & \PSL_2(9) & \PSL_3(2) & \PSL_3(4) & \PSL_4(2) \\ \hline
\sfM(L) & 2 & 6 & 2 & 3\times 4^2 & 2
\end{array}
\]
Thus $\pi=\pi(\ZZ_{(n,q-1)})$, or $\pi\subseteq\{2,3\}$.
Near-exact factorizations of $G$ are determined below.
%The proof of Theorem~\ref{ThmNearExact} for linear groups is given in the following proposition.

\begin{proposition}\label{ThmLinear}
%Let $G$ be an almost simple group with socle $\PSL_n(q)$, and l
Let $G=HK$ be a near-exact factorization with nonsolvable $H$ and $K$.
%a nontrivial near-exact factorization with nonsolvable $H$ and $K$.
Then, interchanging $H$ and $K$ if necessary, $(G,H,K)$ lies in the following table, where $\calO_1,\calO_2\leqslant\calO\leqslant\ZZ_2$ with $|\calO_1||\calO_2|=|\calO|$.
\[
\begin{array}{|llll|}
\hline
G & H & K & H\cap K \\
\hline
\PSL_3(4).(2\times\calO) & \PGL_2(7)\times\calO_1 & \M_{10}{:}\calO_2 & \Sy_3 \\ \hline
\SL_4(2).\calO & \Sy_5\times\calO_1 & \PSL_2(7).\calO_2 & 1 \\
%\SL_4(2).2 & \Sy_5\times2 & \PGL_2(7) & 2\\
\SL_4(2).\calO & \A_5\times\calO & 2^3{:}\SL_3(2) & 2^2\\
\SL_4(2).\calO & \Sy_5\times\calO & 2^3{:}\SL_3(2) & \D_8\\
\SL_4(2).2 & \Sy_5\times2 & \PGL_2(7) & 2\\
\SL_4(2).2 & \Sy_5 & 2^3{:}\SL_3(2) & 2^2\\ \hline
\SL_4(4).(2\times\calO) & \SL_2(16)\times\calO_1 &\SL_3(4).(2\times\calO_2) & 1 \\
\hline
\end{array}
\]
\end{proposition}

\begin{proof}
If $n$ is a prime, then $G=HK$ is determined in~\cite[Theorem~3.3]{LX2022}, giving rise to
\[
(G,H,K)=(\PSL_3(4).(2\times\calO),\ \PGL_2(7)\times\calO_1,\ \M_{10}{:}\calO_2),
\]
where $\calO_1,\calO_2\leqslant\calO\leqslant\ZZ_2$ with $|\calO_1||\calO_2|=|\calO|$.
In this case, $H\cap K=\Sy_3$.

%We now assume that $n$ is not a prime.
For the small groups $G$ with $L=\PSL_4(2)$, $\PSL_4(3)$ or $\PSL_4(4)$, computation in \magma~\cite{BCP1997} shows that $(G,H,K,H\cap K)$ lies in the table of Proposition~\ref{ThmLinear}.

Now assume that $n\geqslant4$ is not prime, and $(n,q)\not=(4,2)$, $(4,3)$ or $(4,4)$.
We aim to derive a contradiction.
%By Lemmas~\ref{LemXia5} and~\ref{LemXia27}, we may assume that $G=HL=KL$ and there exist maximal subgroups $A$ and $B$ of $G$ containing $H$ and $K$ respectively, so that $G=HK=AB$.
By Lemmas~\ref{LemXia5}-\ref{LemXia27}, we may assume that $H\leqslant A$ and $K\leqslant B$ for some  maximal subgroups $A$ and $B$, so that $G=HK=AB$.
The maximal factorizations $G=AB$ are classified in~\cite{LPS1990}, from which we read off the candidates as in Table~\ref{TabMaxLinear}.
{\begin{table}[htbp]
\captionsetup{justification=centering}
\caption{Maximal factorizations of $G$ with $n\geqslant4$ and $(n,q)\not=(4,2)$}\label{TabMaxLinear}
\[\begin{array}{|l|l|l|}
\hline
 A^{(\infty)} & B^{(\infty)} & \\
\hline
\lefthat\SL_a(q^b) & q^{n-1}{:}\SL_{n-1}(q) & ab=n, a\geqslant2, \mbox{$b$ prime} \\
\lefthat\Sp_n(q) & q^{n-1}{:}\SL_{n-1}(q)  & \mbox{$n\geqslant4$ even} \\

 \lefthat\Sp_n(q) & \SL_{n-1}(q) & \mbox{$n\geqslant4$ even} \\
 \lefthat\SL_{n/2}(q^2) & \SL_{n-1}(q) & \mbox{$n\geqslant4$ even, $q\in\{2,4\}$} \\
\hline
\end{array}\]
\end{table}}

We analyze the subgroup $K$ of $B$.
For a subgroup $X$ of $B$, we denote by $\ov X$ the subgroup $X\Rad(B)/\Rad(B)$ of $B/\Rad(B)$.
%so let $\ov B=B/\Rad(B)$, $\ov K=K\Rad(B)/\Rad(B)$ and $\overline{A\cap B}=(A\cap B)\Rad(B)/\Rad(B)$.
Since $G=AB$, one has $B=(A\cap B)K$, and hence $\ov B=\overline{(A\cap B)}\,\ov K$.
We note that $\ov B$ is an almost simple group with socle $\PSL_{n-1}(q)$, and $\ov K$ is nonsolvable as $K$ is nonsolvable.

Suppose that $\ov K^{(\infty)}<\ov B^{(\infty)}$.
Then $\ov K$ is a factor of $\ov B$ in the nontrivial factorization $\ov B=\overline{(A\cap B)}\,\ov K$.
Since $|\Rad(B)|$ divides $q^{n-1}(q-1)$, we observe that,
\begin{equation}\label{EqnXia12}
\text{if ${|L|/|A^{(\infty)}|}$ is divisible by $\ppd(q^i-1)$ for $i\geqslant2$, then so is $|\ov K^{(\infty)}|$.}
\end{equation}
%
% with $|KN|$ divisible by ${|L|\over|A^{(\infty)}|}$, by Lemmas~\ref{LemXia3} and~\ref{LemXia7}\,(1).
%Thus $|KN|$ is divisible by $r\in\ppd(q^{n-1}-1)$, and
%{\color{blue}
%so is $|\ov K|$ as $|\Rad(B)|$ is coprime to $r$ by Lemma~\ref{LemXia7}\,(2).
%}
Take $r\in\ppd(q^{n-1}-1)$. Then~\eqref{EqnXia12} implies that $|\ov K^{(\infty)}|$ is divisible by $r$.
If $n-1$ is prime, then $\ov B$ (with socle $\PSL_{n-1}(q)$) does not have a nonsolvable core-free factor of order divisible by $r$ by \cite[Theorem\,3.3]{LX2022}, which is a contradiction.
Therefore, $n-1$ is not prime, and as $n$ is not prime either, we have $n\geqslant9$.
Take $s\in\ppd(q^{n-2}-1)$ if $A^{(\infty)}$ is in row~1 of Table~\ref{TabMaxLinear} with $b\geqslant3$, and take $s\in\ppd(q^{n-3}-1)$ otherwise.
Then ${|L|/|A^{(\infty)}|}$ is divisible by $s$, and so is $|\ov K|$ by~\eqref{EqnXia12}.
%{\color{blue}
%Hence $|\ov K|$ is divisible by $s$ (by Lemma~\ref{LemXia7}\,(2)).
%}
However, Table~\ref{TabMaxLinear} shows that there is no core-free factor of $\ov B$ with order divisible by both $r$ and $s$, a contradiction.

We thus conclude that $\ov K^{(\infty)}=\ov B^{(\infty)}=\PSL_{n-1}(q)$. Consequently,
\[
\text{$K^{(\infty)}= q^{n-1}{:}\SL_{n-1}(q)$ or $\SL_{n-1}(q)$.}
\]

First let $K^{(\infty)}=q^{n-1}{:}\SL_{n-1}(q)$. Then $\Nor_L(K^{(\infty)})=\Pa_i$ is a maximal subgroup of $L$, where $i\in\{1,n-1\}$.
Since $K\leqslant\Pa_i[G]$ and $G=HK$, we have $G=H\Pa_i[G]$, and so $H$ is transitive on $\mathcal{P}_i[G]$.
By a result of Hering (see~\cite[Lemma~3.1]{LPS2010}), $H^{(\infty)}$ is one of the following groups:
\[
\text{$\lefthat\SL_a(q^b)$ with $n=ab$,\, $\lefthat\Sp_a(q^b)$ with $n=ab$\, or\, $\G_2(q^b)'$ with $n=6b$ and $q$ even.}
\]
However, for these groups, $|H^{(\infty)}|_{\pi'}|K^{(\infty)}|_{\pi'}$ does not divide $|L|_{\pi'}$, contradicting Lemma~\ref{LemXia4}.

Next assume that $K^{(\infty)}=\SL_{n-1}(q)$. Then $K^{(\infty)}$ stabilizes an antiflag of $\mathbb{F}_q^n$.
Noticing that $\Aut(L)=\PGaL_n(q).2$ acts primitively on the set of antiflags, we see that the stabilizer $\Nor_{\Aut(L)}(K^{(\infty)})$ of an antiflag is a maximal subgroup of $\Aut(L)$.
%Let $Y=L.2\leqslant\Aut(L)$ such that $Y\nleqslant\PGaL_n(q)$. Then $\Nor_Y(K^{(\infty)})$ is the stabilizer of an antiflag and is maximal in $Y$.
%Hence, in particular, $K$ is almost maximal in $G$ by definition.
Since $G=HK$ is transitive on the set of antiflags and $K$ is contained in the stabilizer of an antiflag in $G$, it follows that $H$ is antiflag-transitive.
Then by~\cite[Theorem~3.2]{LPS2010},
$H^{(\infty)}$ is one of the following groups:
\begin{align*}
&\text{$\lefthat\Sp_n(q)$,\,~$\lefthat\SL_{n/2}(q^2)$ with $q\in\{2,4\}$,\,~$\Sp_{n/2}(q^2)$ with $q\in\{2,4\}$,}\\
&\text{$\G_6(q)'$ with $n=6$ and $q$ even,\, or\, $\G_2(q^2)$ with $n=12$ and $q\in\{2,4\}$.}
\end{align*}
For these groups, $|H^{(\infty)}|_{\pi'}|K^{(\infty)}|_{\pi'}$ does not divide $|L|_{\pi'}$, contradicting Lemma~\ref{LemXia4}.
\end{proof}

\subsection{Unitary groups}
\ \vspace{1mm}

Let $G$ be a unitary group with socle $L=\PSU_n(q)$, where $n\geqslant3$, let $\sfM(L)$ be the Schur multiplier, and let $\pi=\pi(\sfM(L))$.
Then either $\sfM(L)=\ZZ_{(n,q+1)}$, or $(n,q,|\sfM(L)|)=(4,2,2)$, $(4,3,36)$ or $(6,2,12)$.
Thus either $\pi=\pi(\ZZ_{(n,q+1)})$ or $\pi\subseteq\{2,3\}$.
%We describe near-exact factorizations of $G$ in the following proposition.

\begin{proposition}\label{ThmUnitary}
Let $G=HK$ be a near-exact factorization with nonsolvable $H$ and $K$.
Then, interchanging $H$ and $K$ if necessary, $(G,H,K)$ lies in the following table, where $\calO_1\calO_2=\calO\leqslant\ZZ_2$ and $\ell=|\calO_1||\calO_2|/|\calO|$ $(=1$ or $2)$.
\[
\begin{array}{|llll|}
\hline
G & H & K & H\cap K \\
\hline
\PSU_4(3).(2^2.\calO) & \PSL_2(7).(2^2.\calO_1) & (3^4{:}\A_6).(2^2.\calO_2) & \D_{6\ell} \\
\PSU_4(3).(4.\calO) & \PSL_2(7).(4.\calO_1) & (3^4{:}\A_6).(4.\calO_2) & \D_{6\ell} \\
\SU_4(4).4 & \SL_2(16).4 & \SU_3(4).4 & 1 \\
\hline
\end{array}
\]
\end{proposition}

\begin{proof}
If $n$ is prime, then the proposition follows from~\cite[Theorem~3.5]{LX2022}.
If $L$ is one of
\[
\text{$\SU_4(2)$,\, $\PSU_4(3)$,\, $\SU_4(4)$,\, $\PSU_4(5)$,\, $\PSU_6(2)$,\, $\SU_8(2)$ or $\PSU_9(2)$,}
\]
then computation in \magma~\cite{BCP1997} leads to the candidates for $(G,H,K)$ in Proposition~\ref{ThmUnitary}.

Now assume $n\geqslant4$ and $L$ is not any of these seven groups.
%$(n,q)\notin\{(4,2),(4,3),(4,4),(4,5),(6,2),(8,2),(9,2)\}$.
%For the rest of the proof we will show that such a case is not possible.
By Lemmas~\ref{LemXia5}-\ref{LemXia27}, we may assume that $H\leqslant A$ and $K\leqslant B$ for some maximal subgroups $A$ and $B$ of $G$.
%We first assume that $(m,q)=(9,2)$.
%Then $A'=\SU_9(2)$.
%For this case, $B=\Pa_1[G]$ or $\N_1[G]$,
%Clearly, $G=AB$ is not a near-exact factorization.
%Note that $|G|\over|B|$ is divisible by 19,
%so is $|H|$.
%Since $A'=(3.\J_3)\Pa_1[A']$, we have $H=3.\J_3$,
%and hence $B=\Pa_1[G]$.
%By \cite[5.2.12]{LPS1990}, one has $H\cap B'=H\cap\Pa_1[A']=2^{2+4}.(3\times\Sy_3)$.
%It is easy to verify that $G=HK$ with $K=B'$
%is a minimal factorization.
The maximal factorizations $G=AB$ are classified in~\cite{LPS1990}, from which we obtain that $n=2m$, and $B^{(\infty)}=\SU_{2m-1}(q)$.
% and $A^{(\infty)}$ is one of the groups:
% \[
% \text{$\lefthat(q^{m^2}.\SL_m(q^2))$,\, $\PSp_{2m}(q)$,\, $\lefthat\SL_m(q^2)$ with $q\in\{2,4\}$,\, $\Suz$.}
% \]
By~\cite[Theorem~A]{LPS1990} we conclude that there is no factorization of a group with socle $\PSU_{2m-1}(q)$ with a nonsolvable factor. Then, since $B=(A\cap B)K$ with $K$ nonsolvable, it follows that $K^{(\infty)}=B^{(\infty)}=\SU_{2m-1}(q)$.

Notice that $G$ is transitive on $\mathcal{N}_1[G]$, the set of nonsingular $1$-spaces.
Since $K\leqslant B=\N_1[G]$, we have $G=HK=H\N_1[G]$, and so $H$ is transitive on $\mathcal{N}_1[G]$.
Then by~\cite[Lemma~4.3]{LPS2010}, one of the following occurs:
\begin{enumerate}[{\rm (i)}]
\item $H^{(\infty)}=\lefthat\Sp_{2m}(q)'$, $\Sp_m(q^2)$ ($q\in\{2,4\}$) or $\Sp_{m/2}(q^4)$ ($q=2$);
\item $H^{(\infty)}=\lefthat\SL_m(q^2)$ ($q\in\{2,4\}$) or $\lefthat\SL_{m/2}(q^4)$ ($q=2$);
\item $H^{(\infty)}=\G_2(q)'$ ($m=3$ and $q$ even), $\G_2(q^2)$ ($m=6$ and $q\in\{2,4\}$) or $\G_2(q^4)$ ($m=12$ and $q=2$);
%\item $m=3$, $q=2$, and $H^{(\infty)}=\PSU_4(3)$ or $\M_{22}$, as in row~6 of Table~\ref{TabUnitary};
\item $L=\PSU_{12}(2)$ and $H^{(\infty)}=\Suz$;
\item $H\cap L=\lefthat(P.R)$, where $P\leqslant q^{m^2}$ and $R\leqslant\GaL_m(q^2)$ is transitive on $\mathcal{P}_1[\GaL_m(q^2)]$.
\end{enumerate}
%If~(i) occurs, then $(L,H^{(\infty)},K^{(\infty)})$ lies in row~1 or~2 of~\ref{TabUnitary}. If~(ii) occurs, then $(L,H^{(\infty)},K^{(\infty)})$ lies in row~1 of~\ref{TabUnitary}. If~(iii) occurs, then $(L,H^{(\infty)},K^{(\infty)})$ lies in row~1 or~3 of~\ref{TabUnitary}.
Moreover, by~\cite[Lemma~3.1]{LPS2010}, the group $R$ in~(v) satisfies one of the following:
\begin{itemize}
\item[(v.1)] $R^{(\infty)}=\SL_a(q^{2b})$ with $m=ab$;
\item[(v.2)] $R^{(\infty)}=\Sp_a(q^{2b})$ with $m=ab$;
\item[(v.3)] $R^{(\infty)}=\G_2(q^{2b})$ with $m=6b$ and $q$ even.
\end{itemize}
Since $H\cap K$ is a $\pi$-group, where $\pi=\pi(\sfM(L))=\pi(\ZZ_{(n,q+1)})$, Lemma~\ref{LemXia4} asserts that $|H^{(\infty)}|_{\pi'}|K^{(\infty)}|_{\pi'}$ divides $|L|_{\pi'}$.
It follows that $|H^{(\infty)}|_{\pi'}$ divides \[\mbox{$|L|_{\pi'}/|K^{(\infty)}|_{\pi'}=|\PSU_{2m}(q)|_{\pi'}/|\SU_{2m-1}(q)|_{\pi'}
=q^{2m-1}(q^{2m}-1)_{\pi'}$.}\]
Inspecting the candidates for $H^{(\infty)}$ in cases (i)--(v), we conclude that $H\cap L=\lefthat(P.R)$ with $P\leqslant q^{m^2}$ and $\SL_2(q^m)\trianglelefteq R\leqslant\GaL_m(q^2)$, as in case~(ii) or~(v.1).
Then $P=\lefthat P$ is a normal subgroup of $H$, and $\SL_2(q^m)\trianglelefteq H/P\leqslant\GaL_2(q^m)$.
Let $q=p^f$ with prime $p$ and integer $f$. Note that $|K\cap L|_p=|\SU_{2m-1}(q)|_p$ and so
\begin{equation}\label{EqnXia1}
|P||H/P|_p=|H|_p=\frac{|G|_p}{|K|_p}=\frac{|L|_p}{|K\cap L|_p}=\frac{|\SU_n(q)|_p}{|\SU_{n-1}(q)|_p}=q^{n-1}=q^{2m-1}.
\end{equation}
Since $|H/P|_p\leqslant|\GaL_2(q^m)|_p\leqslant fmq^m$, this implies that
\[
|P|=\frac{q^{2m-1}}{|H/P|_p}\geqslant\frac{q^{2m-1}}{fmq^m}=\frac{q^{m-1}}{fm}>1.
\]
Moreover, since $|H/P|_p\geqslant|\SL_2(q^m)|_p=q^m$, it follows from~\eqref{EqnXia1} that
\[
|P|=\frac{q^{2m-1}}{|H/P|_p}\leqslant\frac{q^{2m-1}}{q^m}=q^{m-1}.
\]
Let $r\in\ppd(q^{2m}-1)$.
Then $r$ divides $|P.R|$ as $\SL_2(q^m)\trianglelefteq R$. On the other hand, $r$ does not divide $|\Aut(P)|$ since $P$ is a $p$-group of order at most $q^{m-1}$. Hence $r$ divides $|\Cen_{P.R}(P)|$ and thus divides $|\Cen_{(P.R)\cap\GU_{2m}(q)}(P)|$. Take $1\neq u\in P$. Then $u$ is a unipotent element in $\GU_{2m}(q)$, and
\[
\Cen_{(P.R)\cap\GU_{2m}(q)}(P)\leqslant\Cen_{(P.R)\cap\GU_{2m}(q)}(u)\leqslant\Cen_{\GU_{2m}(q)}(u).
\]
Let $\bigoplus_iJ_i^{n_i}$ be the Jordan canonical form of $u$, where $J_i$ is a unipotent Jordan block of length $i$ and $\sum_iin_i=2m$. From~\cite[Theorem~7.1(ii)]{LS2012} we see that $|\Cen_{\GU_{2m}(q)}(u)|_{p'}$ divides $\prod_i|\GU_{n_i}(q)|$. Consequently, $r$ divides $\prod_i|\GU_{n_i}(q)|$. Since $r$ is a primitive prime divisor of $q^{2m}-1$, this yields $i=1$ and $n_1=2m$. Hence the Jordan canonical form of $u$ is $\bigoplus_iJ_i^{n_i}=J_1^{2m}$, which implies that $u=1$, which a contradiction.
\end{proof}

\subsection{Orthogonal groups in odd dimension}
\ \vspace{1mm}

Let $L=\Soc(G)=\Omega_{2m+1}(q)$ with $m\geqslant3$ and $q$ odd, and let $\pi=\pi(\sfM(L))$.
Then either $\sfM(L)=\ZZ_2$ or $(L,\sfM(L))=(\Omega_7(3),\ZZ_6)$, so that either $\pi=\{2\}$ or $\pi=\{2,3\}$ for $L=\Omega_7(3)$.

%In this subsection we prove Theorem~\ref{ThmNearExact} for orthogonal groups.

\begin{proposition}\label{ThmOmega}
Let $G=HK$ be a near-exact factorization with nonsolvable $H$ and $K$.
Then $G=\Omega_7(3)$ or $\Omega_7(3).2$.
\end{proposition}

\begin{proof}
%For $L=\Omega_7(3)$, computation in \magma~\cite{BCP1997} shows that the near-exact factorization $G=HK$ lies in the table of Proposition~\ref{ThmOmega}.

Suppose $(m,q)\neq(3,3)$.
Then $H\cap K$ is a 2-group.
By Lemmas~\ref{LemXia5} and~\ref{LemXia27}, we may assume that $G=HL=KL$ and there exist maximal subgroups $A$ and $B$ of $G$ containing $H$ and $K$ respectively.
The maximal factorizations $G=AB$ are classified in \cite{LPS1990}, listed in Table~\ref{TabMaxOmega}.
{\begin{table}[htbp]
\caption{Maximal factorizations of $G$ with $(m,q)\neq(3,3)$}\label{TabMaxOmega}
\centering
\begin{tabular}{|l|l|l|}
\hline
 $A\cap L$ & $B\cap L$ & Remark\\
\hline
 $\Pa_m$ & $\N_1^-$ & \\
 $\Pa_1$, $\N_1^+$, $\N_1^-$, $\N_2^+$, $\N_2^-$ & $\G_2(q)$ & $m=3$ \\
 $\PSp_6(q).a$ & $\N_1^-$ & $m=6$, $q=3^f$, $a\leqslant2$ \\
 $\F_4(q)$ & $\N_1^-$ & $m=12$, $q=3^f$ \\
\hline
%5 & $\Sy_9$, $\Sp_6(2)$ & $\G_2(3)$ & $m=3$, $q=3$ \\
%6 & $\Sy_9$, $\Sp_6(2)$ & $\N_1^+$ & $m=3$, $q=3$ \\
%7 & $\Sy_9$, $\Sp_6(2)$, $2^6.\A_7$ & $\Pa_3$ & $m=3$, $q=3$ \\
%\hline
\end{tabular}
\end{table}}
Notice that $B\cap L=\N_1^-$, or $B\cap L=\G_2(q)$ with $m=3$.

{\sc Case 1.}\, Let $B\cap L=\G_2(q)$, so that $L=\Omega_7(q)$.
Then $B$ is an almost simple group with socle $\Omega_7(q)$.
Consulting~\cite[Theorem~1.1]{HLS1987}, we see that $\G_2(q)$ has no near-exact factorization, and hence $K^{(\infty)}=B^{(\infty)}=\G_2(q)$.
As $G=KL$, this implies that $K=B$ is maximal in $G$.
By~\cite[5.1.15]{LPS1990}, there is a factorization $X=GY$ of an almost simple group $X$ with socle $\POm_8^+(q)$ such that $Y=\N_1[X]$ and $G\cap Y=K$.
Thus it follows from $G=HK$ that $X=HY$.
Since Lemma~\ref{LemXia4} implies that $|H^{(\infty)}|_{2'}$ divides ${|G^{(\infty)}|_{2'}/|K^{(\infty)}|_{2'}}=q^3(q^4-1)_{2'}$, we derive from~\cite[Lemma~4.5]{LPS2010} that $H<M$ for some stabilizer $M$ of a totally singular $4$-space in $X$ and $H^{(\infty)}=\PSL_2(q^2)$.
Let $S=H^{(\infty)}\cong\Omega_4^-(q)$, let $R$ be the unipotent radical of the parabolic subgroup $M$ of $X$, and let $V_2$ be a natural module for $H^{(\infty)}$.
By \cite[Corollarly~4.5]{AJL1983}, we have ${\rm Ext}_{S}(V_2\otimes V_2^{(q)},{\rm triv}_2)=0$, and hence $R\downarrow S=V_2\otimes V_2^{(q)}\oplus {\rm triv}_2$ (realized over $\mathbb{F}_q$) (\cite[Page~38]{LPS2010}), where ${\rm triv}_2$ is a $2$-dimensional trivial module.
Moreover, we deduce from~\cite[Corollarly~4.5]{AJL1983} that ${\rm H^1}(S,V_2\otimes V_2^{(q)})=0$, and so the semidirect product $RS$ has a unique conjugacy class of complements to $R$. Thus we may assume $S\leqslant\N_3[G]$. It follows that $H\leqslant\N_3[G]$.
However, this leads to $G=\N_3[G]B$, contradicting~\cite[Theorem~A]{LPS1990}.

{\sc Case 2.}\, Let $B\cap L=\N_1^-$.
Then $B$ is an almost simple group with socle $B^{(\infty)}=\Omega_{2m}^-(q)$, and $A^{(\infty)}$ is one of the following:
\[\mbox{$(q^{m(m-1)/2}.q^m){:}\SL_m(q)$,
$\PSp_6(q)$ with $(m,q)=(6,3^f)$, $\F_4(q)$ with $(m,q)=(12,3^f)$}.\]
%Then the index ${|L|\over|B^{(\infty)}|}={|\Omega_{2m+1}(q)|\over|\POm_{2m}^-(q)|}=q^m(q^m-1)$.
 %is divisible by
%\[r\in\ppd(q^{2m}-1)\ \text{ and }\ s\in\ppd(q^{2m-2}-1),\]
%and so is the order $|H|$.

Suppose that $K^{(\infty)}<B^{(\infty)}$.
Since the Schur multiplier $\sfM(B^{(\infty)})=\sfM(\Omega_{2m}^-(q))=\ZZ_{(4,q^m+1)}$, it follows that $B=(H\cap B)K$ is a near-exact factorization, and so satisfies Theorem~\ref{ThmNearExact} as $G=HK$ is a minimal counterexample.
This is not possible by \cite[Theorem~1.1]{LX2022} and Theorem~\ref{ThmNearExact}.

We thus conclude that $K^{(\infty)}=B^{(\infty)}=\Omega_{2m}^-(q)$. Then Lemma~\ref{LemXia4} implies that $|H^{(\infty)}|_{2'}$ divides ${|G^{(\infty)}|_{2'}/|K^{(\infty)}|_{2'}}=q^m(q^m-1)_{2'}$. Now consider the factorization $A=H(A\cap K)$.
If $A^{(\infty)}=\F_4(q)$, then by~\cite[Theorem~1]{HLS1987} we have $H^{(\infty)}=A^{(\infty)}=\F_4(q)$, contradicting the conclusion that $|H^{(\infty)}|_{2'}$ divides $q^m(q^m-1)_{2'}$.
If $A^{(\infty)}=\PSp_6(q)$ with $(m,q)=(6,3^f)$, then since $G=HK$ is a minimal counterexample, we conclude from Theorem~\ref{ThmNearExact}
and~\cite[Theorem~1.1]{LX2022} that $q=3$ and $H^{(\infty)}=\PSL_2(27)$, again contradicting the conclusion that $|H^{(\infty)}|_{2'}$ divides $q^m(q^m-1)_{2'}$.

Thus $A^{(\infty)}=(q^{m(m-1)/2}.q^m){:}\SL_m(q)$. Let $\overline{\phantom{x}}\colon A\to A/\Rad(A)$ be the quotient modulo $\Rad(A)$.
Then $\ov A$ is almost simple with socle $\PSL_m(q)$, and $\overline{A}=\overline{H}\,\overline{(A\cap K)}$.
Since $q$ is odd and $|\overline{H}|$ is divisible by $\ppd(q^m-1)$, it follows from Table~\ref{TabMaxLinear} that $\ov{A\cap K}$ stabilizes a $1$-space or an antiflag, and so $\ov H$ is transitive on the set of $1$-spaces or the set of antiflags.
Since $H$ is nonsolvable with $|H^{(\infty)}|_{2'}$ dividing $q^m(q^m-1)_{2'}$, we conclude from~\cite[Lemma~3.1 and Theorem~3.2]{LPS2010} that one of the following holds:
\begin{itemize}
\item[(i)]
$H^{(\infty)}=P.\SL_a(q^b)$ or $P.\Sp_a(q^b)$ with $m=ab$ and $P\leqslant q^{m(m-1)/2}.q^m$;
\item[(ii)]
$H^{(\infty)}=P.2^{1+4}.\A_5$ or $P.\SL_2(5)$ with $(m,q)=(4,3)$ and $P\leqslant 3^{6+4}$;
\item[(iii)]
$H^{(\infty)}=P.\SL_2(13)$ with $(m,q)=(6,3)$ and $P\leqslant 3^{15+6}$.
\end{itemize}
The candidates in case~(ii) are excluded by computation in \magma~\cite{BCP1997}.

Next assume that~(i) occurs. Then $\SL_2(q^b)\trianglelefteq R\leqslant\GL_2(q^b).b$. Since $|H\cap L|_p=|P||R|_p$ and $|K\cap L|_p=|\Omega_{2m}^-(q)|_p$, it follows from Lemma~\ref{LemXia4} that
\begin{equation}\label{EqnXia5}
|G/L|_p|P||R|_p=|G/L|_p|H\cap L|_p=\frac{|L|_p}{|K\cap L|_p}=\frac{|\Omega_{2m+1}(q)|_p}{|\Omega_{2m}^-(q)|_p}=q^m=q^{2b}.
\end{equation}
As $|G/L|_p\leqslant f$ and $|R|_p\leqslant|\GL_2(q^b).b|_p\leqslant q^bb$, this implies that
\[
|P|=\frac{q^{2b}}{|G/L|_p|R|_p}\geqslant\frac{q^{2b}}{f\cdot q^bb}=\frac{q^b}{fb}>1.
\]
Moreover, since $|R|_p\geqslant|\SL_2(q^b)|_p=q^b$, we deduce from~\eqref{EqnXia5} that
\[
|P|=\frac{q^{2b}}{|G/L|_p|R|_p}\leqslant\frac{q^{2b}}{q^b}=q^b.
\]
Let $r\in\ppd(q^{2b}-1)=\ppd(q^m-1)$. Then $r$ divides $|P.R|$, but does not divide $|\Aut(P)|$ as $|P|\leqslant q^b<q^m$. Hence $r$ divides $|\Cen_{P.R}(P)|$, and thus divides $|\Cen_{\Pa_m[L]}(P)|$ as $P.R\leqslant\Pa_m[L]$. Let $T$ be a subgroup of order $r$ in $\Cen_{\Pa_m[L]}(P)$, and let $Q$ be the unipotent radical of $\Pa_m[L]$. Then $\Cen_Q(T)=1$ (see~\cite[Page~32,~Line~2]{LPS2010}), and so $T$ does not centralize any nontrivial element in $Q$. In particular, $T$ does not centralize $P$, contradicting our choice of $T$.

Finally, assume that~(iii) occurs. Then $\SL_2(13)\trianglelefteq R<\GL_6(3)$. Since $\Nor_{\GL_6(3)}(\SL_2(13))=\SL_2(13)$, it follows that $R=\SL_2(13)$. In particular, $|R|_3=|\SL_2(13)|_3=3$. This in conjunction with Lemma~\ref{LemXia4} implies that
\[
3|P|=|G/L|_3|P||R|_3=|G/L|_3|H\cap L|_3=\frac{|L|_3}{|K\cap L|_3}=\frac{|\Omega_{13}(3)|_3}{|\Omega_{12}^-(3)|_3}=3^6,
\]
and so $|P|=3^5<q^m$. Then the similar argument as in case~(i) leads to a contradiction.
\end{proof}

\subsection{Orthogonal groups of minus type}
\ \vspace{1mm}

Let $L=\Soc(G)=\POm_{2m}^-(q)$ with $m\geqslant4$.
Then $\sfM(L)=\ZZ_{(4,q^m+1)}$ and so $\pi\subseteq\{2\}$.

%In this subsection we prove Theorem~\ref{ThmNearExact} for orthogonal groups of minus type.

\begin{proposition}\label{ThmOmegaMinus}
There are no nonsolvable subgroups $H,K$ of $G$ such that
$G=HK$ is a near-exact factorization.
%subject to the condition where $H\cap K$ is a $\pi$-group.
\end{proposition}

Suppose $G=HK$ with nonsolvable $H$ and $K$ such that $H\cap K$ is a $\pi$-group.
By Lemmas~\ref{LemXia5} and~\ref{LemXia27}, we may assume that $G=HL=KL$ and $H\leqslant A$ and $K\leqslant B$ for some maximal subgroups $A$ and $B$ of $G$.
Then $G=AB$ is given in \cite{LPS1990}, see Table~\ref{TabMaxOmegaMinus}.

{\begin{table}[htbp]
\caption{Maximal factorizations of $G$}\label{TabMaxOmegaMinus}
\centering
\begin{tabular}{|l|l|l|l|}
\hline
 & $A\cap L$ & $B\cap L$ & Remark\\
\hline
1 & $\lefthat\GU_m(q)$ & $\Pa_1$ & $m$ odd \\
2 & $\lefthat\GU_m(q)$ & $\N_1$ & $m$ odd \\
3 & $\Omega_m^-(q^2).2$ & $\N_1$ & $m$ even, $q\in\{2,4\}$, $G=\Aut(L)$ \\
4 & $\GU_m(4)$ & $\N_2^+$ & $m$ odd, $q=4$, $G=\Aut(L)$ \\
5 & $\A_{12}$ & $\Pa_1$ & $m=5$, $q=2$ \\
\hline
\end{tabular}
\end{table}}

Computation in \magma~\cite{BCP1997} shows that there is no near-exact factorization for $L=\POm_{10}^-(2)$ with $(A\cap L,B\cap L)=(\A_{12},\Pa_1)$, excluding the candidate in row~5.
%\[
%(H\cap L,K\cap L)\in\{(\SU_5(2).a,\Pa_1),\,(\SU_5(2).a,\Omega_8^-(2)),\,(\A_{12},\Pa_1),\,(\M_{12},\Pa_1)\},
%\]
%This is not possible since now $|H^{(\infty)}||K^{(\infty)}|$ should divide $|G^{(\infty)}|$.

\begin{lemma}\label{lem:O-minus-3}
Let $A^{(\infty)}=\Omega_m^-(q^2)$, and $B^{(\infty)}=\Omega_{2m-1}(q)$ with $m$ even and $q\in\{2,4\}$.
Then $G=HK$ is not a near-exact factorization.
\end{lemma}

\begin{proof}
As $q\in\{2,4\}$, the Schur multiplier $\sfM(L)=1$, and so $H\cap K=1$.
For $m\in\{4,6\}$, computation in \magma~\cite{BCP1997} shows that there is no such factorization $G=HK$. Now assume $m\geqslant8$.
By~\cite[Prop.4.3.16]{KL1990}, the group $A$ is almost simple with socle $\Omega_m^-(q^2)$.
Under Hypothesis~\ref{hypo}, Theorem~\ref{ThmNearExact} implies that $A$ has no near-exact factorization.
Since $A=H(A\cap K)$, we conclude that $H^{(\infty)}=A^{(\infty)}=\Omega_m^-(q^2)$.
As $|L|/|A\cap L|$ is divisible by
\[
r\in\ppd(q^{2m-2}-1),\, s\in\ppd(q^{2m-6}-1)\,\, \text{and }\,\, t\in\ppd(q^{m-1}-1),
\]
so is $|K|$. Note that $B=(H\cap B)K$, and by~\cite[Theorem~A]{LPS1990}, no almost simple group with socle $\Omega_{2m-1}(q)\cong\Sp_{2m-2}(q)$ has a nontrivial factor of order divisible by $rst$. We have $K^{(\infty)}=B^{(\infty)}=\Omega_{2m-1}(q)$.
However, this implies that $|H||K|\geqslant|\Omega_m^-(q^2)||\Omega_{2m-1}(q)|>|G|$, which is a contradiction to the condition $H\cap K=1$.
\end{proof}

%It is easy to see that $|H^{(\infty)}|_{2'}|B^{(\infty)}|_{2'}$ does not divide $|L^{(\infty)}|$, and hence $B=KB_\o$ is a near-exact factorization.
%Note that $|K^{(\infty)}|$ is divisible by $r$.
%By Theorem~\ref{ThmNearExact},
%either $B_\o$ is solvable, or
%$(B^{(\infty)},B_\o^{(\infty)},K^{(\infty)})=(\Sp_6(4),\SL_2(16),\G_2(4))$.
%For the latter, $|K|$ is indivisible by ${|L|\over|H^{(\infty)}|}$,
%which is a contradiction.
%For the former, \cite[Theorem~1.1]{LX} tells us that $K^{(\infty)}=\Omega_{2m-2}^-(q)$.
%However, $|H^{(\infty)}|_{2'}|K^{(\infty)}|_{2'}$ does not divide $|L|$, which is not  possible.

%$K^{(\infty)}=\Omega_{2m-2}^-(q)$, or $(m,q,K^{(\infty)})=(3,3,\G_2(3))$, $(3,3,\Sp_6(2))$ or $(m,3,\Omega_8^-(3))$.
%Then $|H^{(\infty)}|_{2'}|K^{(\infty)}|_{2'}$ does not divide $|L|$, which is not  possible.
%
%Moreover, we derive from $G=HB=H\N_1[G]$ that $H$ is transitive on $\mathcal{N}_1[G]$.
%By \cite[Lemma~4.4]{LPS2010} with $m\geqslant4$, $H^{(\infty)}$ is one of the groups:
%\begin{quote}
%$\SU_{m/2}(q^2)$ with $m/2$ odd, or $\SU_{m/4}(q^4)$ with $m/4$ odd and $q=2$, or
%$\Omega_m^-(q^2)$, or $\Omega_{m/2}^-(q^4)$ with $q=2$.
%\end{quote}
%Calculation shows that $|H^{(\infty)}||K^{(\infty)}|$ divides $|G^{(\infty)}|$, a contradiction.

We now consider the case $A\cap L=\lefthat\GU_m(q)$, as in rows~1, 2 and 4 of Table~\ref{TabMaxOmegaMinus}.

\begin{lemma}\label{lem:Unitary-1}
For $A^{(\infty)}=\SU_m(q)$ with $m$ odd, $G=HK$ is not a near-exact factorization.
\end{lemma}

\begin{proof}
Let $M=\Rad(A)$. Then $A/M$ is an almost simple group with socle $\PSU_m(q)$.

{\sc Case 1.}\, Assume that $B\cap L=\Pa_1$, as in row~1 of Table~\ref{TabMaxOmegaMinus}. Since
\[
B^{(\infty)}=\lefthat\Big(q^{2m-2}{:}\Omega^-_{2m-2}(q)\Big)=q^{2m-2}{:}\Omega^-_{2m-2}(q),
\]
it follows that $|L|/|A\cap L|$ is divisible by
\[
r\in\ppd(q^{2m-2}-1)\ \text{ and }\ s\in\ppd(q^{m-2}-1),
\]
and so is $|K|$. Moreover, $|L|/|B\cap L|$ is divisible by $t\in\ppd(q^{2m}-1)$, and so is $|H|$.
Let $N=\Rad(B)$. Then $B/N$ is a group with socle $\POm_{2m-2}^-(q)$.
Since $|K|$ is divisible by both $r$ and $s$, so is $|KN/N|$.
Inspecting the maximal factorizations of groups with socle $\POm_{2m-2}^-(q)$, we see that there is no core-free factor of $B/N$ with order divisible by both $r$ and $s$. Thus $KN/N\trianglerighteq\Soc(B/N)=\POm_{2m-2}^-(q)$.
In particular, $|K^{(\infty)}|_{\pi'}$ is divisible by $|\POm_{2m-2}^-(q)|_{\pi'}$.
Since Lemma~\ref{LemXia4} shows that $|H^{(\infty)}|_{\pi'}|K^{(\infty)}|_{\pi'}$ divides $|\POm_{2m}^-(q)|_{\pi'}$, it follows that
\begin{equation}\label{EqnXia7}
|H^{(\infty)}|_{\pi'}\text{ divides }q^{2m-2}(q^m+1)(q^{m-1}-1).
\end{equation}
We now consider $H$ in its overgroup $A$.
Since $|H|$ is divisible by $t\in\ppd(q^{2m}-1)$, so is $|HM/M|$.
If $HM/M\trianglerighteq\Soc(A/M)=\PSU_m(q)$, then~\eqref{EqnXia7} would not hold.
Hence $HM/M$ is a core-free factor of $A/M$ with order divisible by $t$.
Since $m\geqslant5$ is odd, we see from~\cite[Theorem~A]{LPS1990} that $(m,q)=(9,2)$ and $(HM/M)\cap\Soc(A/M)\leqslant\J_3$.
Moreover, searching for such factors $HM/M$ in $A/M$ in \magma~\cite{BCP1997} gives $(HM/M)\cap\Soc(A/M)=\J_3$.
Thus $H\cap L=3.\J_3$ is a maximal subgroup of $A^{(\infty)}=\SU_9(2)$ (see~\cite[Table~8.57]{BHR2013}), and so $|H|_2\leqslant|H\cap L|_2|\Out(L)|_2=2|\J_3|_2=2^8$. Since $|H|_2|K|_2\geqslant|G|_2\geqslant|L|_2=2^{72}$, this yields $|K|_2\geqslant2^{64}$, which together with $KN/N\trianglerighteq\POm_{2m-2}^-(q)$ implies $K^{(\infty)}=2^{16}{:}\Omega_{16}^-(2)$. However, it follows that $|H^{(\infty)}|_{\pi'}|K^{(\infty)}|_{\pi'}$ does not divide $|\POm_{2m}^-(q)|_{\pi'}$, a contradiction.

{\sc Case 2.}\, Assume that $B\cap L=\N_1$, as in row~~2 of Table~\ref{TabMaxOmegaMinus}.
Let $N=\Rad(B)$, and let $\overline{\phantom{x}}\colon B\to B/N$ be the quotient modulo $N$. Then $\overline{B}$ is an almost simple group with socle $\Omega_{2m-1}(q)$, and $\overline{K}$ is a factor of $\overline{B}$.
Since $A\cap B\cap L\leqslant\N_1[A\cap L]$ (see~\cite[3.5.2(b)]{LPS1990}), we have $(A\cap B)M/M\leqslant\N_1[A/M]$.
Since $A=H(A\cap B)$, the group $HM/M$ is a supplement of $(A\cap B)M/M$ in $A/M$ and hence is a supplement of $\N_1[A/M]$ in $A/M$.
As $m\geqslant5$ is odd, we see from~\cite[Theorem~A]{LPS1990} that there is no core-free supplement of $\N_1[A/M]$ in $A/M$.
Thus $HM/M\trianglerighteq\Soc(A/M)=\PSU_m(q)$, and so $H^{(\infty)}=A^{(\infty)}=\SU_m(q)$.

Since $|L|/|A\cap L|=|\POm_{2m}^-(q)|/|\lefthat\GU_m(q)|$ is divisible by
\[
r\in\ppd(q^{2m-2}-1),\ \ s\in\ppd(q^{2m-6}-1)\ \text{and }\ t\in\ppd(q^{m-2}-1),
\]
the order $|K|$ is divisible by $rst$, and so is $|\overline{K}|$. Since $\overline{K}$ is a factor of $\overline{B}$ with $\Soc(\overline{B})=\Omega_{2m-1}(q)$, \cite[Theorem~A]{LPS1990} implies that one of the following appears:
\begin{enumerate}[{\rm (i)}]
\item $\overline{K}\trianglerighteq\Soc(\overline{B})=\Omega_{2m-1}(q)$;
\item $\overline{K}\cap\Soc(\overline{B})\leqslant\Omega_{2m-2}^-(q).2$.
\end{enumerate}
For~(i), $|K^{(\infty)}|$ is divisible by $|\Omega_{2m-1}(q)|$, contradicting the conclusion from Lemma~\ref{LemXia4} that $|H^{(\infty)}|_{\pi'}|K^{(\infty)}|_{\pi'}$ divides $|L|_{\pi'}$.

Thus~(ii) appears. Let $\overline{B}_1$ be a maximal core-free subgroup of $\overline{B}$ containing $\overline{K}$ such that $\overline{B}_1\cap\Soc(\overline{B})=\Omega_{2m-2}^-(q).2$, and let $B_1$ be a subgroup of $B$ containing $N$ such that $B_1/N=\overline{B}_1$. Then $\overline{B}_1$ is an almost simple group with socle $\Omega_{2m-2}^-(q)\cong\POm_{2m-2}^-(q)$ (as $m$ is odd), and $K\leqslant B_1$. Thus it follows from $G=AK$ that $B_1=(A\cap B_1)K$ and so
\[
\overline{B}_1=(\overline{A\cap B_1})\overline{K}.
\]
Since $A\cap B_1\cap L\leqslant A\cap B\cap L\leqslant\N_1[A\cap L]$, we see that $\overline{A\cap B_1}$ is core-free in $\overline{B}_1$. Then as $|\overline{K}|$ is divisible by $rst$, we conclude from~\cite[Theorem~A]{LPS1990} that $\overline{K}\trianglerighteq\Soc(\overline{B}_1)=\Omega_{2m-2}^-(q)$, again contradicting Lemma~\ref{LemXia4}.

{\sc Case 3.}\, Let $B\cap L=\N_2^+$, as in row~4 of Table~\ref{TabMaxOmegaMinus}.

In this case, $q=4$, $A^{(\infty)}=\SU_m(4)$, and $B^{(\infty)}=\Omega_{2m-2}^-(4)$. Since $|L|/|A\cap L|$ is divisible by $r\in\ppd(4^{2m-2}-1)$ and $s\in\ppd(4^{m-2}-1)$, so is $|K|$.

Let $N=\Rad(B)$. Then the group $B/N$ is almost simple with socle $\Omega_{2m-2}^-(4)$, and $KN/N$ is a factor of $B/N$ with order divisible by $r$ and $s$.
%we see that $KN/N$ is a factor of $B/N$ with $|KN|$ divisible by $|L|/|A\cap L|$. As $|L|/|A\cap L|$ is divisible by $\ppd(4^{2m-2}-1)$ and $\ppd(4^{m-2}-1)$, we then conclude by Lemma~\ref{LemXia7}\,(2) that $|KN/N|$ is divisible by $\ppd(4^{2m-2}-1)$ and $\ppd(4^{m-2}-1)$.
By the classification in~\cite{LPS1990} of the maximal factorizations of almost simple groups with socle $\Omega_{2m-2}^-(4)$, there is no core-free factor of $B/N$ with order divisible by both $r\in\ppd(4^{2m-2}-1)$ and $s\in\ppd(4^{m-2}-1)$. Hence $KN/N\trianglerighteq\Soc(B/N)=\Omega_{2m-2}^-(4)$, and so $K^{(\infty)}=B^{(\infty)}=\Omega^-_{2m-2}(4)$.

Let $M=\Rad(A)$. Then $A/M$ is an almost simple group with socle $\PSU_m(4)$, and $HM/M$ is a factor of $A/M$. According to the classification in~\cite{LPS1990} of the maximal factorizations of almost simple unitary groups $A/M$ has no factorization as $m$ is odd. Therefore, $HM/M\trianglerighteq\Soc(A/M)=\PSU_m(4)$, and so $H^{(\infty)}=A^{(\infty)}=\SU_m(4)$.
However, this contradicts Lemma~\ref{LemXia4} again.
\end{proof}

\subsection{Orthogonal groups of plus type}\label{SecXia2}
\ \vspace{1mm}

Let $L=\Soc(G)=\POm_{2m}^+(q)$, where $m\geqslant4$, and let $\pi=\pi(\sfM(L))$.
Then either $\pi=\pi(\ZZ_{(2,q-1)})$, or $\pi=\{2\}$ for $(m,q)=(4,2)$.
In particular, $\sfM(L)$ is a 2-group.

\begin{proposition}\label{ThmOmegaPlus}
Let $G=HK$ be a near-exact factorization with nonsolvable $H$ and $K$.
Then $(G,H,K)$ is one of the following, where $\calO_1,\calO_2\leqslant\calO\leqslant\mathrm{C}_2$ with $|\calO|=|\calO_1||\calO_2|$.
\[
\begin{array}{|llll|}
\hline
G & H & K & H\cap K \\
\hline
\Omega_8^+(2).\calO & 2^4.\A_5\leqslant H\leqslant 2^{1+6}.\Sy_5 & \A_9 & H\cap K\leqslant\D_8 \\
\Omega_8^+(2).\calO & 2^4.\A_5\leqslant H\leqslant 2^6.\Sy_5 & \A_9,\Sy_9 & H\cap K\leqslant\D_8 \\
\Omega_8^+(2).\calO & \A_5\leqslant H\leqslant 2^{1+6}.\Sy_5 & \Sp_6(2) & H\cap K\leqslant\D_8{:}\D_8 \\
\Omega_8^+(2).\calO & \A_5\leqslant H\leqslant 2^6.\Sy_5 & \Sp_6(2),\Sp_6(2)\times2 & H\cap K\leqslant\D_8{:}\D_8 \\ \hline
\Omega_8^+(4).(2\times\calO) & (\SL_2(16).4)\times\calO_1 & (\Sp_6(4).2)\times\calO_2 & 1\\
\hline
\end{array}
\]
\end{proposition}

To prove Proposition~\ref{ThmOmegaPlus}, by Lemmas~\ref{LemXia5} and~\ref{LemXia27}, we may assume that $G=HL=KL$ and there exist maximal subgroups $A$ and $B$ of $G$ containing $H$ and $K$ respectively.
We first treat a few small groups by computation in \magma~\cite{BCP1997}.

\begin{lemma}\label{LemXia29}
If $L=\Omega_8^+(2)$, $\POm_8^+(3)$ or $\Omega_8^+(4)$, then $L=\Omega_8^+(2)$ or $\Omega_8^+(4)$, and, interchanging $H$ and $K$ if necessary, $(L,H^{(\infty)},K^{(\infty)})$ lies in Proposition~$\ref{ThmOmegaPlus}$.
\end{lemma}

We assume next that $(m,q)\neq(4,2),\ (4,3)\,\text{ or }\,(4,4)$.
The maximal factorizations $G=AB$ are classified in~\cite{LPS1990}, as in Tables~\ref{TabMaxOmegaPlus2}\footnote{See~\cite[Remark~1.5]{GGP} for a correction of this table.} and~\ref{TabMaxOmegaPlus1}.

{\begin{table}[htbp]
\caption{Maximal factorizations of $G$ for $L=\POm_8^+(q)$ with $q\geqslant5$}\label{TabMaxOmegaPlus2}
\centering
\begin{tabular}{|l|l|l|l|}
\hline
 & $A\cap L$ & $B\cap L$ & Remark\\
\hline
1 & $\Omega_7(q)$ & $\Omega_7(q)$ & $A=B^\tau$ for some triality $\tau$ \\
2 & $\Pa_1$, $\Pa_3$, $\Pa_4$ & $\Omega_7(q)$ & \\
3 & $\lefthat((q+1)/d\times\Omega_6^-(q)).2^d$ & $\Omega_7(q)$ & $d=(2,q-1)$, $A$ in $\mathcal{C}_1$ or $\mathcal{C}_3$ \\
4 & $\lefthat((q-1)/d\times\Omega_6^+(q)).2^d$ & $\Omega_7(q)$ & $d=(2,q-1)$, $A$ in $\mathcal{C}_1$ or $\mathcal{C}_2$, \\
 & & & $q>2$, $G=L.2$ if $q=3$ \\
5 & $(\PSp_2(q)\otimes\PSp_4(q)).2$ & $\Omega_7(q)$ & $q$ odd, $A$ in $\mathcal{C}_1$ or $\mathcal{C}_4$ \\
6 & $\Omega_8^-(q^{1/2})$ & $\Omega_7(q)$ & $q$ square, $A$ in $\mathcal{C}_5$ or $\mathcal{S}$ \\
7 & $\Pa_1$, $\Pa_3$, $\Pa_4$ & $\lefthat((q+1)/d\times\Omega_6^-(q)).2^d$ & $d=(2,q-1)$, $B$ in $\mathcal{C}_1$ or $\mathcal{C}_3$ \\
%%8 & $((q+1)\times\Omega_6^-(q)).2$ & $\Omega^-_8(q^{1/2})$& $q\in\{4,16\}$, $B$ in $\mathcal{C}_3$ or $\mathcal{S}$  \\
%\hline
%8 & $\Omega_7(2)$, $\Pa_1$, $\Pa_3$, $\Pa_4$, &$\A_9$  & $q=2$ \\
% & $(3\times\Omega_6^-(2)).2$ & & \\
%9 & $(\SL_2(4)\times\SL_2(4)).2^2$ & $\Omega_7(2)$ & $q=2$, $A$ in $\mathcal{C}_2$ or $\mathcal{C}_3$ \\
%10 & $\Omega_7(3)$, $\Pa_1$, $\Pa_3$, $\Pa_4$ & $\Omega_8^+(2)$ & $q=3$ \\
%11 & $\Pa_{13}$, $\Pa_{14}$, $\Pa_{34}$ & $\Omega_8^+(2)$ & $q=3$, $G\geqslant L.2$ \\
%12 & $\Pa_1$, $\Pa_3$, $\Pa_4$ & $2^6.\A_8$ & $q=3$, $G\geqslant L.2$ \\
%13 & $(\SL_2(16)\times\SL_2(16)).2^2$ & $\Omega_7(4)$ & $q=4$, $G\geqslant L.2$ \\
%14 & $(3\times\Omega_6^+(4)).2$, $\Omega_8^-(2)$ & $(5\times\Omega_6^-(4)).2$ & $q=4$, $G\geqslant L.2$ \\
8 & $\Omega_8^-(4)$ & $(17\times\Omega_6^-(16)).2$ & $q=16$ \\
\hline
\end{tabular}
\end{table}}

{\begin{table}[htbp]
\caption{Maximal factorizations of $G$ with $L=\POm_{2m}^+(q)$, $m\geqslant5$}\label{TabMaxOmegaPlus1}
\centering
\begin{tabular}{|l|l|l|l|}
\hline
 & $A\cap L$ & $B\cap L$ & Remark\\
\hline
1 & $\Pa_m$, $\Pa_{m-1}$ & $\N_1$ & \\
2 & $\lefthat\GU_m(q).2$ & $\N_1$ & $m$ even \\
3 & $(\PSp_2(q)\otimes\PSp_m(q)).a$ & $\N_1$ & $m$ even, $q>2$, $a=\gcd(2,m/2,q-1)$ \\
4 & $\Pa_m$, $\Pa_{m-1}$ & $\N_2^-$ & \\
5 & $\lefthat\GU_m(q).2$ & $\Pa_1$ & $m$ even \\
6 & $\lefthat\GL_m(q).2$ & $\N_1$ & $G\geqslant L.2$ if $m$ odd \\
7 & $\Omega_m^+(4).2^2$ & $\N_1$ & $m$ even, $q=2$ \\
8 & $\Omega_m^+(16).2^2$ & $\N_1$ & $m$ even, $q=4$, $G\geqslant L.2$ \\
9 & $\lefthat\GL_m(2).2$ & $\N_2^-$ & $q=2$, $G\geqslant L.2$ if $m$ odd \\
10 & $\lefthat\GL_m(4).2$ & $\N_2^-$ & $q=4$, $G\geqslant L.2$, $G\neq\GO_{2m}^+(4)$ \\
11 & $\lefthat\GU_m(4).2$ & $\N_2^+$ & $m$ even, $q=4$, $G=L.2$ \\
12 & $\Omega_9(q).a$ & $\N_1$ & $m=8$, $a\leqslant2$ \\
13 & $\Co_1$ & $\N_1$ & $m=12$, $q=2$ \\
\hline
\end{tabular}
\end{table}}

Inspecting the candidates in Tables~\ref{TabMaxOmegaPlus2} and~\ref{TabMaxOmegaPlus1} we see that
\[
B^{(\infty)}=q^{2m-2}{:}\Omega_{2m-2}^+(q),\,\ \Omega_{2m-1}(q),\,\ \Omega_{2m-2}^+(q)\,\text{ or }\,\Omega_{2m-2}^-(q).
\]
%Moreover, if $B^{(\infty)}=q^{2m-2}{:}\Omega_{2m-2}^+(q)$ or $\Omega_{2m-2}^+(q)$, then $A^{(\infty)}=\lefthat\SU_m(q)$ and $m$ is even.
We handle the cases $B^{(\infty)}\in\{q^{2m-2}{:}\Omega_{2m-2}^+(q),\Omega_{2m-2}^+(q)\}$, $B^{(\infty)}=\Omega_{2m-1}(q)$ and $B^{(\infty)}=\Omega_{2m-2}^-(q)$, respectively, in the following three lemmas.

\begin{lemma}\label{LemOmegaPlus1Row5,11}
$B^{(\infty)}\not=q^{2m-2}{:}\Omega_{2m-2}^+(q)$ or $\Omega_{2m-2}^+(q)$.
\end{lemma}

\begin{proof}
Suppose that $B^{(\infty)}=q^{2m-2}{:}\Omega_{2m-2}^+(q)$ or $\Omega_{2m-2}^+(q)$, that is, $B\cap L=\Pa_1$ or $\N_2^+$.
Then $A^{(\infty)}=\lefthat\SU_m(q)$ with $m$ even, by Tables~\ref{TabMaxOmegaPlus2} and~\ref{TabMaxOmegaPlus1}.
Computation in \magma~\cite{BCP1997} shows that $L\not=\Omega_{12}^+(2)$.
%Thus $(m,q)\neq(6,2)$.
Let
\[
r\in\ppd(q^{2m-2}-1),\, \ s\in\ppd(q^m-1),\, \ t\in\ppd(q^{m-1}-1)\,\text{ and }\,u\in\ppd(q^{2m-4}-1).
\]
Since $A^{(\infty)}=\lefthat\SU_m(q)$, the index $|L|/|A\cap L|$ is divisible by $tu$.
As $|L|/|B^{(\infty)}|$ is divisible by $(q^m-1)(q^{m-1}+1)$, so is $|L|/|B\cap L|$. Hence $|L|/|B\cap L|$ is divisible by $rs$.
Write $q=p^f$ with prime $p$. Let $M=\Rad(A)$. Then $A/M$ is almost simple with socle $\PSU_m(q)$, and $HM/M$ is a factor of $A/M$ with order divisible by $rs$. By~\cite[Theorem~A]{LPS1990}, there is no core-free factor of $A/M$ with order divisible by $rs$. Hence $HM/M\trianglerighteq\Soc(A/M)=\PSU_m(q)$, and so $H^{(\infty)}=A^{(\infty)}=\lefthat\SU_m(q)$.

Let $N=\Rad(B)$. Then $B/N$ is almost simple with socle $\POm_{2m-2}^+(q)$. Since $|L|/|A\cap L|$ is divisible by $tu$, the group $KN/N$ is a factor of $B/N$ with order divisible by $tu$. Inspecting Table~~\ref{TabMaxOmegaPlus1} for $m\geqslant6$ and Table~\ref{TabMaxLinear} for $m=4$ (note $\POm_6^+(q)\cong\PSL_4(q)$), we see that no almost simple group with socle $\POm_{2m-2}^+(q)$ has a core-free factor of order divisible by $tu$. Therefore, $KN/N\trianglerighteq\Soc(B/N)=\POm_{2m-2}^+(q)$. However, this implies that $|H^{(\infty)}||K^{(\infty}|$ is divisible by $|\PSU_m(q)||\POm_{2m-2}^+(q)|$, contradicting Lemma~\ref{LemXia4}.
\end{proof}

\begin{lemma}\label{LemOmegaPlus1Row1--3,6--8,12--13}
$B^{(\infty)}\not=\Omega_{2m-1}(q)$.
\end{lemma}

\begin{proof}
Suppose that $B^{(\infty)}=\Omega_{2m-1}(q)$.
Suppose further that $K^{(\infty)}=B^{(\infty)}$.
Since $|H^{\infty}|_{\pi'}$ divides $|L|_{\pi'}/|K^{(\infty)}|_{\pi'}$ by Lemma~\ref{LemXia4}, inspecting the candidates of $H$ in~\cite[Lemma~4.5]{LPS2010}, we have that either $H\cap L=\lefthat(P.\SL_2(q^{m/2}))$, or $H\cap L=\lefthat(P.\G_2(q^{m/6})')$ with $q$ even, where $P\leqslant q^{m(m-1)/2}$. In particular, $m$ is even. It follows that $K^{(\infty)}=\Omega_{2m-1}(q)$ is maximal in $L$, and so $K$ is maximal in $G$ as $G=KL$. If $q$ is even, then $H\cap K=1$, which contradicts~\cite[Theorem~1.1]{LPS2010}.
Thus $q$ is odd, and $H\cap L=\lefthat(P.\SL_2(q^{m/2}))$. Note that $p\notin\pi=\{2\}$. We have $|H|_p|K|_p=|G|_p$. If $P=1$, then $|H\cap L|_p=q^{m/2}$, which implies that $|H|_p<q^{m-1}=|G|_p/|K|_p$, a contradiction. Hence $P\neq1$, and so $|P|\geqslant q^m$. However, this implies that $|H|_p>|G|_p/|K|_p$, again a contradiction.

Thus $K^{(\infty)}<B^{(\infty)}=\Omega_{2m-1}(q)$. Let $N=\Rad(B)$, and let $\overline{\phantom{x}}\colon B\to B/N$ be the quotient modulo $N$. Then $\overline{B}$ is an almost simple group with socle $\Omega_{2m-1}(q)$ (note that $\Omega_{2m-1}(q)\cong\Sp_{2m-2}(q)$ if $q$ is even), $\overline{K}$ is a core-free factor of $\overline{B}$, and $\overline{B}=\overline{(H\cap B)}\,\overline{K}$. If $q$ is odd, then this is a near-exact factorization of $\overline{B}$, contradicting our inductive hypothesis. Thus $q$ is even. As $B^{(\infty)}=\Omega_{2m-1}(q)$, we see from Tables~\ref{TabMaxOmegaPlus2} and~\ref{TabMaxOmegaPlus1} that $A^{(\infty)}$ is one of the following:
\begin{align}
&q^{m(m-1)/2}{:}\SL_m(q),\,\ \SL_m(q),\,\ \SU_m(q)\text{ with }m\text{ even},\,\ \Omega_7(q)\text{ with }m=4,\label{EqnXia14}\\
&\PSp_2(q)\times\PSp_m(q),\,\ \Omega_9(q)\text{ with }m=8,\,\ \Omega_8^-(q^{1/2})\text{ with }m=4,\label{EqnXia15}\\
&\Omega_m^+(q^2)\text{ with }q\in\{2,4\},\,\ \Co_1\text{ with }(m,q)=(12,2).\label{EqnXia16}
\end{align}
Let $r\in\ppd(q^{2m-2}-1)$, $s\in\ppd(q^m-1)$, $t\in\ppd(q^{m-1}-1)$ and $u\in\ppd(q^{2m-4}-1)$.

Suppose that $A^{(\infty)}$ is as in~\eqref{EqnXia15} or~\eqref{EqnXia16}. If $A^{(\infty)}$ lies in~\eqref{EqnXia15}, then $|G|/|A|$ is divisible by $rtu$, and so is $|\overline{K}|$. If $A^{(\infty)}$ lies in~\eqref{EqnXia16}, then $|G|/|A|$ is divisible by $rt$ and $\ppd(q^{2m-6}-1)$, and so is $|\overline{K}|$. In either case, however, we see from Table~\ref{TabMaxSymplectic} that there is no such core-free factor $\overline{K}$ of $\overline{B}$, a contradiction.

Thus we conclude that $A^{(\infty)}$ is in~\eqref{EqnXia14}. Let $\overline{B}_1$ be a maximal core-free subgroup of $\overline{B}$ containing $\overline{K}$, and let $B_1$ be a subgroup of $B$ containing $N$ such that $B_1/N=\overline{B}_1$. Then $\overline{B}=(\overline{A\cap B})\overline{B}_1$, and so $\overline{B}_1$ is a factor of $\overline{B}$. Moreover, as $K\leqslant B_1$, we have $B_1=(A\cap B_1)K$ and hence $\overline{B}_1=(\overline{A\cap B_1})\overline{K}$. For $L=\Omega_{10}^+(2)$ or $\Omega_{12}^+(2)$ computation in \magma~\cite{BCP1997} shows that no such near-exact factorization $G=HK$ exists. Thus $(m,q)\notin\{(5,2),(6,2)\}$. Let $M=\Rad(A)$. We proceed by different candidates for $A^{(\infty)}$ in~\eqref{EqnXia14}.

\textsc{Case}~1: $A^{(\infty)}=q^{m(m-1)/2}{:}\SL_m(q)$ or $\SL_m(q)$. Then $A/M$ is an almost simple group with socle $\PSL_m(q)$, and the index $|G|/|A|$ is divisible by $r\in\ppd(q^{2m-2}-1)$ and $u\in\ppd(q^{2m-4}-1)$. Hence $|\overline{K}|$ is divisible by $ru$, and so is $|\overline{B}_1|$. Then by Table~\ref{TabMaxSymplectic}, the maximal core-free factor $\overline{B}_1$ of $\overline{B}$ satisfies $\overline{B}_1^{(\infty)}=\Omega_{2m-2}^-(q)$. Since $\overline{B}_1=(\overline{A\cap B_1})\overline{K}$ with $|\overline{K}|$ divisible by $ru$, it follows from~\cite[Theorem~A]{LPS1990} that $K^{(\infty)}=\overline{B}_1^{(\infty)}=\Omega_{2m-2}^-(q)$.
Hence $K\leqslant\Nor_G(K^{(\infty)})=\N_2^-[G]$, up to a triality automorphism of $L$ if $m=4$. Thus $|G|/|K|$ is divisible by $s\in\ppd(q^m-1)$ and $t\in\ppd(q^{m-1}-1)$, and so $HM/M$ is a factor of $A/M$ with order divisible by $st$.
So we conclude from Table~\ref{TabMaxLinear} that $HM/M\trianglerighteq\Soc(A/M)=\PSL_m(q)$.
This yields that $|H^{(\infty)}||K^{(\infty)}|$ is divisible by $|\PSL_m(q)||\Omega_{2m-2}^-(q)|$, contradicting Lemma~\ref{LemXia4}.

\textsc{Case}~2: $A^{(\infty)}=\SU_m(q)$ with $m$ even. In this case, $A/M$ is an almost simple group with socle $\PSU_m(q)$, and $|L|/|A\cap L|$ is divisible by $t\in\ppd(q^{m-1}-1)$ and $u\in\ppd(q^{2m-4}-1)$. This implies that $|K|$ is divisible by $tu$, and so is $|\overline{K}|$. Since $\overline{K}\leqslant\overline{B}_1$, the maximal core-free subgroup $\overline{B}_1$ of the almost simple symplectic group $\overline{B}$ is a factor of $\overline{B}$ with order divisible by $tu$. Then we conclude by Table~\ref{TabMaxSymplectic} that $\overline{B}_1$ is an almost simple group with socle $\Omega_{2m-2}^+(q)$. Since $\overline{B}_1=(\overline{A\cap B_1})\overline{K}$ with $|\overline{K}|$ divisible by $tu$, we then conclude by Table~\ref{TabMaxOmegaPlus1} that $\overline{K}\trianglerighteq\Soc(\overline{B}_1)=\Omega_{2m-2}^+(q)$. Therefore, $K^{(\infty)}=\Omega_{2m-2}^+(q)$.
As a consequence, $|G|/|K|$ is divisible by $r\in\ppd(q^{2m-2}-1)$ and $s\in\ppd(q^m-1)$.
Hence $|H|$ is divisible by $rs$, and so is $|HM/M|$. Since $HM/M$ is a factor of $A/M$ with $\Soc(A/M)=\PSU_m(q)$, we conclude by~\cite[Theorem~A]{LPS1990} that $HM/M\trianglerighteq\Soc(A/M)=\PSU_m(q)$, which implies $H^{(\infty)}=\SU_m(q)$.
This implies that $|H^{(\infty)}||K^{(\infty)}|$ is divisible by $|\SU_m(q)||\Omega_{2m-2}^+(q)|$, contradicting Lemma~\ref{LemXia4}.

\textsc{Case}~3: $A^{(\infty)}=\Omega_7(q)$ with $m=4$. In this case, $A=(\N_1[G])^\alpha$ and $B=(\N_1[G])^\beta$ for some triality automorphisms $\alpha$ and $\beta$ of $L$. As $K^{(\infty)}<B^{(\infty)}$, similar argument shows that $H^{(\infty)}<A^{(\infty)}$. Since $|G|/|A|=|G|/|B|$ is divisible by $s\in\ppd(q^4-1)$, so are $|H|$ and $|K|$. Hence $HM/M$ and $KN/N$ are core-free factors, respectively, of $A/M$ and $B/N$ with order divisible by $s$. Let $A_1$ be a subgroup of $A$ containing $M$ such that $A_1/M$ is a maximal core-free subgroup of $A/M$ containing $HM/M$, and recall the definition of $B_1$ before Case~1. Then $H\leqslant A_1<A$ and $K\leqslant B_1<B$. This implies that $|A_1|$ and $|B_1|$ are both divisible by $s$, and so by Table~\ref{TabMaxSymplectic} we have
\[
A_1^{(\infty)},B_1^{(\infty)}\in\{q^5{:}\Omega_5(q),\Omega_6^+(q),\Omega_6^-(q),\Omega_5(q)\}.
\]
Note that $|H||K|$ is divisible by $r\in\ppd(q^6-1)$ and $t\in\ppd(q^3-1)$ as $|G|$ is. We derive that $|A_1||B_1|$ is divisible by $rt$, and so $A_1^{(\infty)}=\Omega_6^\varepsilon(q)$ and $B_1^{(\infty)}=\Omega_6^{-\varepsilon}(q)$, where $\varepsilon\in\{+,-\}$. Since $A=(\N_1[G])^\alpha$ and $B=(\N_1[G])^\beta$, it follows that $A_1\leqslant\N_1^\varepsilon[A]\leqslant(\N_2^\varepsilon[G])^\alpha$ and $B_1\leqslant\N_1^{-\varepsilon}[B]\leqslant(\N_2^{-\varepsilon}[G])^\beta$. Thus $G=HK=A_1B_1=(\N_2^\varepsilon[G])^\alpha(\N_2^{-\varepsilon}[G])^\beta$. However, the factorization $G=(\N_2^\varepsilon[G])^\alpha(\N_2^{-\varepsilon}[G])^\beta$ does not exist by Table~\ref{TabMaxOmegaPlus2}, a contradiction.
\end{proof}

\begin{lemma}\label{LemOmegaPlus1Row4,9,10}
$B^{(\infty)}\not=\Omega_{2m-2}^-(q)$.
\end{lemma}

\begin{proof}
Suppose $B^{(\infty)}=\Omega_{2m-2}^-(q)$.
Then $A^{(\infty)}$ is one of the groups by Tables~\ref{TabMaxOmegaPlus2} and~\ref{TabMaxOmegaPlus1}:
\[
\lefthat(q^{m(m-1)/2}{:}\SL_m(q)),\,\ \SL_m(q)\text{ with }q\in\{2,4\},\,\ \Omega_8^-(4)\text{ with }(m,q)=(4,16).
\]
In particular, $|G|/|A|$ is divisible by $r\in\ppd(q^{2m-2}-1)$ and $u\in\ppd(q^{2m-4}-1)$, which implies that $|K|$ is divisible by $ru$. Let $N=\Rad(B)$. Then $B/N$ is almost simple with socle $\POm_{2m-2}^-(q)$, and $KN/N$ is a factor of $B/N$ with order divisible by $ru$. By~\cite[Theorem~A]{LPS1990}, no almost simple group with socle $\POm_{2m-2}^-(q)$ has a core-free factor of order divisible by $ru$. Thus $KN/N\trianglerighteq\Soc(B/N)=\POm_{2m-2}^-(q)$, and so $K^{(\infty)}=B^{(\infty)}=\Omega^-_{2m-2}(q)$.

If $A^{(\infty)}=\SL_m(q)$ with $q\in\{2,4\}$ or $A^{(\infty)}=\lefthat(q^{m(m-1)/2}{:}\SL_m(q))$, then similar argument as in Case~1 in the proof of Lemma~\ref{LemOmegaPlus1Row1--3,6--8,12--13} shows that this is not possible. Now assume that $A^{(\infty)}=\Omega_8^-(4)$ with $(m,q)=(4,16)$. Since $|G|/|B|$ is divisible by $s\in\ppd(16^4-1)$ and $t\in\ppd(16^3-1)$, so is $|H|$. Let $M=\Rad(A)$. Then $A/M$ is almost simple with socle $\Omega_8^-(4)$, and $HM/M$ is a factor of $A/M$ with order divisible by $st$. According to Table~\ref{TabMaxOmegaMinus}, there is no almost simple group with socle $\Omega_8^-(4)$ that has a core-free factor of order divisible by $st$. Hence $HM/M\trianglerighteq\Soc(A/M)=\Omega_8^-(4)$. This implies that $H^{(\infty)}=A^{(\infty)}=\Omega_8^-(4)$, which yields that $|H^{(\infty)}||K^{(\infty)}|$ is divisible by $|\Omega_8^-(4)||\Omega_6^-(16)|$, contradicting Lemma~\ref{LemXia4}.
\end{proof}

\subsection{Symplectic groups}
\ \vspace{1mm}

Let $L=\Soc(G)=\PSp_{2m}(q)$, where $m\geqslant2$ and $(m,q)\neq(2,2)$, and let $\pi=\pi(\sfM(L))$.
Then Schur multiplier $\mathsf{M}(L)$ is either $\ZZ_{(2,q-1)}$ or $\ZZ_2$ for $L=\Sp_6(2)$.
In particular, $\sfM(L)$ is a 2-group.

\begin{proposition}\label{ThmSymplectic}
Let $G=HK$ be a near-exact factorization with nonsolvable $H$ and $K$.
Then, interchanging $H$ and $K$ if necessary, $(G,H,K)$ lies in the following table, where $\calO_1\calO_2=\ZZ_2$.
\[
\begin{array}{|llll|}
\hline
G & H & K & H\cap K \\
\hline
\Sp_4(4).2 & \Sy_5 & \SL_2(16).4 & 1\\ \hline
\Sp_6(2) & \Sy_5\times\calO_1 & \SU_3(3).\calO_2 & |\calO_1||\calO_2|/2 \\
\Sp_6(2) & 2^4.\A_5\leqslant H\leqslant 2^5.\Sy_5 & \SU_3(3)\leqslant K\leqslant\SU_3(3).2 & 4\leqslant H\cap K\leqslant8{:}2^2 \\
\Sp_6(2) & 2^4.\A_5\leqslant H\leqslant 2^5.\Sy_5 & \SL_2(8).3 & H\cap K\leqslant2^2 \\
\Sp_6(2) & 2^5.\A_6\leqslant H\leqslant 2^5.\Sy_6 & \SL_2(8) & 2^2\leqslant H\cap K\leqslant2^3 \\
\hline
\Sp_6(4).2 & \SL_2(16).4 & \G_2(4).2 & 1\\ \hline
\Sp_8(2) & \Sy_5 & \GO_8^-(2) & 1 \\
\hline
\end{array}
\]
\end{proposition}

Computation in \magma~\cite{BCP1997} shows that $(G,H,K)$ satisfies Proposition~$\ref{ThmSymplectic}$ for
\[\mbox{$L=\Sp_4(3)$,\, $\Sp_4(4)$,\, $\Sp_6(2)$,\, $\PSp_6(3)$,\, $\Sp_6(4)$\, or\, $\Sp_8(2)$.}\]
To prove Proposition~\ref{ThmSymplectic}, by Lemmas~\ref{LemXia5} and~\ref{LemXia27}, we may assume that $G=HL=KL$ and there exist maximal subgroups $A$ and $B$ of $G$ containing $H$ and $K$ respectively.
We thus assume in the following that $L$ is not any of these groups.
The maximal factorizations $G=AB$ are classified in \cite{LPS1990}, as recorded in Table~\ref{TabMaxSymplectic} below.

{\small\begin{table}[htbp]
\caption{Maximal factorizations of $G$}\label{TabMaxSymplectic}
\centering
\begin{tabular}{|l|l|l|l|}
\hline
 & $A\cap L$ & $B\cap L$ & Remark\\
\hline
1 & $\PSp_{2a}(q^b).b$ & $\Pa_1$ & $ab=m$, $b$ prime \\
2 & $\Sp_{2a}(q^b).b$ & $\GO_{2m}^+(q)$ & $q$ even, $ab=m$, $b$ prime \\
3 & $\Sp_{2a}(q^b).b$ & $\GO_{2m}^-(q)$ & $q$ even, $ab=m$, $b$ prime \\
4 & $\GO_{2m}^-(q)$ & $\Pa_m$ & $q$ even \\
5 & $\GO_{2m}^-(q)$ & $\Sp_m(q)\wr\Sy_2$ & $m$ even, $q$ even \\
6 & $\GO_{2m}^-(2)$ & $\GO_{2m}^+(2)$ & $q=2$ \\
7 & $\GO_{2m}^-(4)$ & $\GO_{2m}^+(4)$ & $q=4$, $G=L.2$, two classes \\
 & & & of factorizations if $m=2$ \\
8 & $\GO_{2m}^-(4)$ & $\Sp_{2m}(2)$ & $q=4$, $G=L.2$ \\
9 & $\GO_{2m}^-(16)$ & $\Sp_{2m}(4)$ & $q=16$, $G=L.4$ \\
10 & $\Sp_m(4).2$ & $\N_2$ & $m\geqslant6$ even, $q=2$ \\
11 & $\Sp_m(16).2$ & $\N_2$ & $m\geqslant4$ even, $q=4$, $G=L.2$ \\
12 & $\Sz(q)$ & $\GO_4^+(q)$ & $m=2$, $q=2^f$, $f\geqslant3$ odd, \\
 & & & two classes of factorizations \\
13 & $\GO_6^+(q)$, $\GO_6^-(q)$, $\Pa_1$, $\N_2$ & $\G_2(q)$ & $m=3$, $q\geqslant8$ even \\
\hline
\end{tabular}
\end{table}}

As for most candidates in Table~\ref{TabMaxSymplectic}, either $A\cap L$ or $B\cap L$ is one of the three groups $\Pa_1$, $\GO_{2m}^+(q)$ and $\GO_{2m}^-(q)$, we first treat these cases in the following three lemmas.

\begin{lemma}\label{Lem10.2}
$B\cap L\not=\Pa_1$.
\end{lemma}

\begin{proof}
Suppose that $B\cap L=\Pa_1$.
As $G$ is transitive on the set of $1$-dimensional subspaces, so is $H$.
By Hering's classification (see~\cite[Lemma~3.1]{LPS2010}) of such groups $H$, either
\begin{itemize}
\item
$H^{(\infty)}=\PSp_{2a}(q^b)$ with $m=ab$ and $b>1$, or
\item
$H^{(\infty)}=\G_2(q^b)$ with $m=3b$ and $q$ even.
\end{itemize}
In either case, ${|L|/|H^{(\infty)}|}$ is divisible by $\ppd(q^{2m-2}-1)$ and $\ppd(q^{m-1}-1)$, and so is $|K^{(\infty)}|$.
%Moreover, $|H^{(\infty)}|_{\pi'}|B^{(\infty)}|_{\pi'}$ does not divide $|L|$, and so $K^{(\infty)}\neq B^{(\infty)}$.
Note that $B^{(\infty)}=[q^{2m-1}]{:}\Sp_{2m-2}(q)$.
Let $\overline{\phantom{x}}\colon B\to B/\Rad(B)$ be the quotient modulo $\Rad(B)$.
Then $\ov B$ is almost simple with socle $\PSp_{2m-2}(q)$, and $\ov B=\ov K\,\overline{(H\cap B)}$ with $|\ov K|$ divisible by $\ppd(q^{2m-2}-1)$ and $\ppd(q^{m-1}-1)$.
If $\overline{K}$ contains $\PSp_{2m-2}(q)$, then as $H^{(\infty)}=\PSp_{2a}(q^b)$ or $\G_2(q^b)$, the intersection $H\cap K$ is not a 2-group, a contradiction. Hence $\overline{K}$ is core-free in $\overline{B}$.
Since $\overline{K}$ is nonsolvable with order divisible by $\ppd(q^{2m-2}-1)$ and $\ppd(q^{m-1}-1)$, an inspection of Table~\ref{TabMaxSymplectic} shows that $m\geqslant3$ and $\ov K\cap\Soc(\ov B)$ is contained in:
\begin{equation}\label{EqnXia99}
\mbox{$\PSp_{2c}(q^d).d$ with $c<cd=m-1$,\, $\GO_{2m-2}^-(q)$,\, or $\G_2(q)$ with $m=4$.}
\end{equation}
To complete the proof, we show that this is not possible in any of the following cases.

\textsc{Case}~1. $m=3$.
Then $H^{(\infty)}=\PSp_2(q^3)$ or $\G_2(q)$.
If $H^{(\infty)}=\G_2(q)$, then $q$ is even and $G=HK$ is an exact factorization with $H$ maximal in $G$, which is not possible by~\cite[Theorem~1.1]{LPS2010}.
Thus $H^{(\infty)}=\PSp_2(q^3)$, and so $|G|/|H|$ is divisible by $|\PSp_6(q)|/|\PSp_2(q^3)|=q^6(q^4-1)(q^2-1)$.
However, $|K|$ is not divisible by $q^6(q^4-1)(q^2-1)$ by \eqref{EqnXia99}, a contradiction.

\textsc{Case}~2. $m=4$.
In this case, $H^{(\infty)}=\PSp_4(q^2)$ or $\PSp_2(q^4)$.
If $H^{(\infty)}=\PSp_2(q^4)$, then $|G|/|H|$ is divisible by $|\PSp_8(q)|/|\PSp_2(q^4)|=q^{12}(q^6-1)(q^4-1)(q^2-1)$, and so is $|K|$, which is not possible by~\eqref{EqnXia99}.
Therefore, $H^{(\infty)}=\PSp_4(q^2)$. It follows that $|G|/|H|$ is divisible by $|\PSp_8(q)|/|\PSp_4(q^2)|=q^8(q^6-1)(q^2-1)$, and so is $|K|$.
This together with~\eqref{EqnXia99} implies that $K^{(\infty)}=Q.\Omega_6^-(q)$ or $Q.\G_2(q)$, where $q$ is even and $Q$ is a $2$-group with $|Q|>q$.
Since $|\Z(\Rad(B))|=q$, it follows that $|Q|\geqslant q^6$. However, this implies that $|H^{(\infty)}|_2|K^{(\infty)}|_2\geqslant q^{20}>|L|_2$, contradicting Lemma~\ref{LemXia4}.

\textsc{Case}~3. $m\geqslant5$ is odd.
Then $b$ is odd, and so $b\geqslant3$, which implies that $|G|/|H|$ is divisible by $r\in\ppd(q^{2m-2}-1)$ and $s\in\ppd(q^{2m-4}-1)$.
It follows that $|K|$ is divisible by $rs$, and so is $|\overline{K}|$.
Then we conclude from~\eqref{EqnXia99} that $\overline{K}\cap\Soc(\overline{B})\leqslant\GO_{2m-2}^-(q)$.
As a consequence, $|G|/|K|$ is divisible by $\ppd(q^{m-1}-1)$. However, $|H|$ is coprime to $\ppd(q^{m-1}-1)$ as $b$ is an odd prime dividing $m$.
This contradicts $G=HK$.

\textsc{Case}~4. $m\geqslant6$ is even.
If $b$ is odd, then $|L|/|H\cap L|$ is divisible by $\ppd(q^{2m-2}-1)$, $\ppd(q^{m-1}-1)$ and $\ppd(q^{2m-4}-1)$, and so is $|\overline{K}|$. If $b=2$, then $|L|/|H\cap L|$ is divisible by $\ppd(q^{2m-2}-1)$, $\ppd(q^{m-1}-1)$ and $\ppd(q^{2m-6}-1)$, and so is $|\overline{K}|$.
However,~\eqref{EqnXia99} shows that neither is possible.
\end{proof}

\begin{lemma}\label{Lem10.3}
$B\cap L\not=\GO_{2m}^-(q)$.
\end{lemma}

\begin{proof}
Suppose that $B\cap L=\GO_{2m}^-(q)$.
Then $q$ is even, $B$ is almost simple with socle $\Omega_{2m}^-(q)$, and $B=(H\cap B)K$.
By our inductive hypothesis and~\cite[Theorem~3]{BL2021} we derive that $K^{(\infty)}=B^{(\infty)}=\Omega_{2m}^-(q)$.
Since $q$ is even, we have $\mathsf{M}(L)=1$ and so $H\cap K=1$.
Hence Lemma~\ref{LemXia4} implies that $|H^{(\infty)}||K^{(\infty)}|$ divides $|L|$, that is, $|H^{(\infty)}|$ divides $q^m(q^m-1)$.
Then by~\cite[Lemma~4.6]{LPS2010}\footnote{In part~(iv) of Lemma~4.6 in~\cite{LPS2010}, the case $B\trianglerighteq\Omega_{2m}^+(q)$ with $q\in\{2,4\}$ is missing.} we conclude that either
\begin{itemize}
\item $q^m$ is a square and $H^{(\infty)}=\Sp_2(q^{m/2})$, or
\item $m$ is even and $H^{(\infty)}=P.\SL_2(q^{m/2})$ with $P\leqslant q^{m(m+1)/2}$.
\end{itemize}
In particular, $H^{(\infty)}=P.\SL_2(q^{m/2})$ with $P\leqslant q^{m(m+1)/2}$, and $K\cap L=\Omega_{2m}^-(q)$ or $\GO_{2m}^-(q)$. Let $B$ be the maximal subgroup of $G$ containing $K$. Then $B\cap L=\GO_{2m}^-(q)$. If $K\cap L=\GO_{2m}^-(q)$, then $K\cap L$ is maximal in $L$ and so $K$ is maximal in $G$, contradicting~\cite[Theorem~1.1]{LPS2010}. Thus $K\cap L=\Omega_{2m}^-(q)$, and so
\begin{equation}\label{EqnXia11}
|H|=\frac{|G|}{|K|}=\frac{|L|}{|K\cap L|}=\frac{|\Sp_{2m}(q)|}{|\Omega_{2m}^-(q)|}=q^m(q^m-1).
\end{equation}
Consequently, $|P|\leqslant|H|/|\SL_2(q^{m/2})|=q^{m/2}<q^m$. This implies that $\SL_2(q^{m/2})$ acts trivially on $P$. Hence $H^{(\infty)}=P.\SL_2(q^{m/2})=P\times\SL_2(q^{m/2})$. Since $H^{(\infty)}$ is a perfect group, we then obtain $P=1$ and $H^{(\infty)}=\SL_2(q^{m/2})$. Let
\[
S=H^{(\infty)}=\SL_2(q^{m/2}).
\]
Now since $|H/H^{(\infty)}|=q^m(q^m-1)/|\SL_2(q^{m/2})|=q^{m/2}$, we see that $H$ has a nontrivial normal $2$-subgroup $E=\Cen_H(S)$. Then by Borel-Tits Theorem, $H\leqslant\Pa_i[G]$ for some $1\leqslant i\leqslant m$. As $H\geqslant\SL_2(q^{m/2})$, we have $i=m$ with $m$ even. Let $T=\Cen_G(S)$. Then $E\leqslant T$.

For an element $t\in S$ of order $3$, note that $t$ fixes a point of $[G:B]$ if and only if $t$ fixes a point of $[G:K]$, as $|B/K|=2$. Then similar argument as in the proof of~\cite[Proposition~9.1]{LPS2010} shows that $\Cen_L(S)\cong\SL_2(q^{m/2})$, and there exist involutions
\[
u\in\Cen_L(S)\quad\text{and}\quad s\in S
\]
such that $us\in B^x$ for some $x\in G$ and $us$ has Jordan canonical form $J_2^m$, where $J_2$ is a unipotent Jordan block of length $2$. Since $m$ is even, for any element $r\in L$ with Jordan canonical form $J_2^m$, we have $r\in\Omega_{2m}^-(q)$ if and only if $r\in\GO_{2m}^-(q)$ (see~\cite[Lemma~2.2]{LPS2010}). Therefore, $us\in K^x$.
Since no nontrivial field automorphism of $L$ centralizes $S$, we obtain
\[
T=\Cen_G(S)=\Cen_L(S)\cong\SL_2(q^{m/2}).
\]
Then since $\SL_2(q^{m/2})$ has only one conjugacy class of involutions, there exists $y\in T$ such that $u^y\in E$. Now $u^ys\in ES\leqslant H$, and as $y$ centralizes $S$, we have
\[
u^ys=u^ys^y=(us)^y\in(K^x)^y=K^{xy}.
\]
It follows that $u^ys$ is an involution in $H\cap K^{xy}$. However, the exact factorization $G=HK$ implies that $H\cap K^{xy}=1$, a contradiction.
\end{proof}

\begin{lemma}\label{Lem10.4}
$B\cap L\not=\GO_{2m}^+(q)$.
\end{lemma}

\begin{proof}
Suppose that $B\cap L=\GO_{2m}^+(q)$.
Since $q$ is even, we have $\mathsf{M}(L)=1$ and so $H\cap K=1$.
Hence Lemma~\ref{LemXia4} implies that $|H^{(\infty)}||K^{(\infty)}|$ divides $|L|$.
By Lemma~\ref{Lem10.3} we may assume that $A\cap L\neq\GO_{2m}^-(q)$.
Then Table~\ref{TabMaxSymplectic} shows that $A$ is almost simple with one of the following socles:
\begin{enumerate}[{\rm (i)}]
\item
$\Sp_{2a}(q^b)$ with $m=ab$ and $b$ prime, as in rows~2 of Table~\ref{TabMaxSymplectic};
%\item $\GO_{2m}^-(q)$ with $q\in\{2,4\}$, from rows~6-7 of Table~\ref{TabMaxSymplectic};
\item
$\Sz(q)$ with $m=2$ and $q=2^f\geqslant2^3$, as in row~~12 of Table~\ref{TabMaxSymplectic};
\item
$\G_2(q)$ with $m=3$ and $q$ even, as in row~~13 of Table~\ref{TabMaxSymplectic}.
\end{enumerate}

{\sc Case 1.}\ Assume that $m=2$. In this case, $B\cap L=(\SL_2(q)\times\SL_2(q)).2$, and $A^{(\infty)}=\Sp_2(q^2)$ or $\Sz(q)$.
It follows that $|G|/|B|=q^2(q^2+1)/2$, and so $|H|$ is divisible by $\ppd(q^4-1)$.
By \cite[Table~8.1]{BHR2013} and \cite[Table~8.16]{BHR2013}, we have $H^{(\infty)}=A^{(\infty)}=\Sp_2(q^2)$ or $\Sz(q)$.
If $H^{(\infty)}=A^{(\infty)}=\Sp_2(q^2)$, then applying the graph automorphism of $L$ if necessary, we have that $A\cap L=\GO_4^-(q)$, contradicting Lemma~\ref{Lem10.3}.
Thus $H^{(\infty)}=\Sz(q)$, which implies that $H\cap L=\Sz(q)$ and so $|K|=|G|/|H|=|L|/|H\cap L|=q^2(q^2-1)(q+1)$.
However, as $K$ is a nonsolvable subgroup of $B$, we see from \cite[Table~8.1]{BHR2013} that this is not possible.

{\sc Case~2.}\,  Assume that $m\geqslant3$.
In this case, $B$ is an almost simple group with socle $\Omega_{2m}^+(q)$, and $B=(H\cap B)K$.
By our inductive hypothesis and~\cite[Theorem~3]{BL2021} we derive that either $K^{(\infty)}=B^{(\infty)}=\Omega_{2m}^+(q)$, or $(m,q)=(4,4)$ and $K^{(\infty)}=\Sp_6(4)$.
For the latter, $A$ is an almost simple group with socle $\Sp_4(16)$, and $\Sp_6(4)\leqslant K\cap L\leqslant\Sp_6(4).2$.
It follows that $A$ has no nonsolvable subgroup $H$ such that $|H||K|=|G|$, a contradiction.

We thus have $K^{(\infty)}=B^{(\infty)}$, and hence $|H^{(\infty)}|$ divides $|L|/|K^{(\infty)}|=q^m(q^m+1)$.
As a consequence, $H^{(\infty)}<A^{(\infty)}$. Then since $A=H(A\cap K)$ with $H\cap(A\cap K)=1$, we deduce from~\cite[Theorem~1]{HLS1987} that $A^{(\infty)}\neq\G_2(q)$ or $\Sz(q)$. Therefore, $A$ is an almost simple group with socle $\Sp_{2a}(q^b)$.
By Hypothesis~\ref{hypo} and~\cite[Theorem~3]{BL2021}, either $(A^{(\infty)},H^{(\infty)},q^m)=(\Sp_6(4),\G_2(4),64)$, or $H^{(\infty)}\cong\Omega_{2a}^-(q^b)$. However, neither candidates for $|H^{(\infty)}|$ divides $q^m(q^m+1)$, a contradiction.
\end{proof}

If neither $A\cap L$ nor $B\cap L$ is in $\{\Pa_1,\GO_{2m}^+(q),\GO_{2m}^-(q)\}$, then one of the rows~10,~11,~13 of Table~\ref{TabMaxSymplectic} appears.

\begin{lemma}\label{Lem10.5}
Let $A\cap L=\Sp_m(q^2).2$ and $B\cap L=\N_2$ with $q\in\{2,4\}$, as in rows~$10$--$11$ of Table~$\ref{TabMaxSymplectic}$.
Then there is no such near-exact factorization $G=HK$.
\end{lemma}

\begin{proof}
As $q\in\{2,4\}$, we have $m\geqslant4$.
Since $|L|/|A^{(\infty)}|=|\Sp_{2m}(q)|/|\Sp_m(q^2)|$ is divisible by
\[
r\in\ppd(q^{2m-2}-1),\,\ s\in \ppd(q^{m-1}-1)\ \text{ and }\ t\in\ppd(q^{2m-6}-1),
\]
so is $|K|$. Let $N$ be a normal subgroup of $B$ such that $B/N$ is an almost simple group with socle $\Sp_{2m-2}(q)$, and let $\overline{\phantom{x}}\colon B\to B/N$ be the quotient modulo $N$.
Then $|N|$ is coprime to each of $r$, $s$ and $t$, and so $|\overline{K}|$ is divisible by $rst$.

Suppose that $\overline{K}\ngeqslant\Soc(\overline{B})$. Then $\overline{K}$ is a core-free factor of $\overline{B}$ with order divisible by $rst$. Inspecting the maximal factorizations of the symplectic group $\overline{B}$, listed in Table~\ref{TabMaxSymplectic} with $2m$ replaced by $2m-2$, we see that $\overline{B}$ has no core-free factor of order divisible by $rst$, which is a contradiction.

Thus $\overline{K}\trianglerighteq\Soc(\overline{K})=\Sp_{2m-2}(q)$.
Since $G=HB=H\N_2[G]$, we see from \cite[Lemma~4.2]{LPS2010} that one of the following occurs:
\begin{itemize}
\item $H^{(\infty)}=\Sp_m(q^2)$, or $\Sp_{m/2}(q^4)$ with $q=2$;
\item $H^{(\infty)}=\G_2(q^2)$ with $m=6$, or $\G_2(q^4)$ with $(m,q)=(12,2)$.
\end{itemize}
Then $|H^{(\infty)}||K^{(\infty)}|$ does not divide $|L|$, which is a contradiction by Lemma~\ref{LemXia4}.
\end{proof}

\begin{lemma}\label{Lem10.6}
Let $B\cap L=\G_2(q)$ with $q\geqslant8$ even, as in row~~$13$ of Table~$\ref{TabMaxSymplectic}$.
Then there is no such near-exact factorization $G=HK$.
\end{lemma}

\begin{proof}
Since $q$ is even, we have $\mathsf{M}(L)=1$ and so $H\cap K=1$. It follows that $B=(H\cap B)K$ is an exact factorization.
Then we derive from~\cite{HLS1987} that $K\cap L=B\cap L=\G_2(q)$. Hence $K$ is maximal in $G$. However, by~\cite{LPS2010}, there is no such exact
factorization $G=HK$.
\end{proof}

\section{Exact factorizations of almost simple groups}\label{SecXia6}

We produce a classification of exact factorizations of almost simple groups in this section.
%, based on the results obtained in Propositions~\ref{ThmLinear},~\ref{ThmUnitary},~\ref{ThmOmega},~\ref{ThmOmegaPlus}, and \ref{ThmSymplectic}.

%
%Based on the results obtained in Section~\ref{SecXia7}, we are able to completely classify exact factorizations of almost simple groups.

\subsection{Example}
\ \vspace{1mm}

In this subsection we present the examples of exact factorizations $G=HK$ in rows~2, 12, 13, 15 and 19--21 in Table~\ref{TabXia2}.
In what follows, $\calO$ will be a subgroup of $\mathrm{C}_2$, and $\calO_1$ and $\calO_2$ are subgroups of $\calO$ with
\[
|\calO|=|\calO_1||\calO_2|.
\]

\begin{example}\label{ExaRow2}
Let $q\geqslant7$ be a prime power, let $G=\A_{q+1}$ or $\Sy_{q+1}$, let $\calO=G/\A_{q+1}\leqslant\mathrm{C}_2$, and let $H$ be a subgroup of $G$ such that either $H=\PSL_2(q).(2,q-1)$ is sharply $3$-transitive or $H=\PSL_2(q).((2,q-1)\times2)\nleqslant\A_{q+1}$ with $H\cap\A_{q+1}$ sharply $3$-transitive. In the former case let $K=\A_{q-2}.\calO$ be a subgroup of $G$ that intersects trivially with $H$; in the latter case let $K=\A_{q-2}$.
%If $\calO_2=1$, then let $H=\PSL_2(q).((2,q-1)\times\calO_1)$ be a subgroup of $G$ such that $H\cap\A_{q+1}$ is a $3$-transitive group of index $|\calO_1|$ in $H$ (thus $H\cap\A_{q+1}$ is sharply $3$-transitive as $|H\cap\A_{q+1}|=(q+1)q(q-1)$), and let $K=\A_{q-2}<G$. If $\calO_2=2$, then let $H=\PSL_2(q).(2,q-1)$ be a $3$-transitive subgroup of $G=\Sy_{q+1}$ (thus $H$ is sharply $3$-transitive as $|H|=(q+1)q(q-1)$), and let $K$ be a group in one of the two $G$-conjugacy classes of subgroups $\A_{q-2}.2$ that intersects trivially with $H$.
\end{example}

The next five examples of exact factorizations $G=HK$ are verified by computation in \magma~\cite{BCP1997}.

\begin{example}\label{ExaRow12}
Let $L=\SL_4(4)$, let $\phi$ and $\gamma$ be the field automorphism and graph automorphism of $L$ respectively, let $G$ be an almost simple group with socle $L$ such that
\[
G/L\in\{\langle\phi\rangle,\langle\phi\gamma\rangle,\langle\phi\rangle\times\langle\gamma\rangle\},
\]
and let $\calO=\mathrm{C}_{|G/L|/2}\leqslant\mathrm{C}_2$. Let $H$ be a field extension subgroup $(\SL_2(16).4)\times\calO_1$, and let $K$ be a subgroup $\SL_3(4).(2\times\calO_2)$ that is not in $L.\langle\gamma\rangle$.
\end{example}

\begin{example}\label{ExaRow13}
Let $L=\SU_4(4)$, and let $G=\Aut(L)=L.4$. Let $H$ be a subgroup isomorphic to $\SL_2(16).4$ in a maximal subgroup $(\SL_2(16).4)\times\Sy_3$ of $G$, and let $K=\SU_3(4).4<G$.
\end{example}

\begin{example}\label{ExaRow15}
Let $G=\Sp_6(2)$, and let $\calO\leqslant\mathrm{C}_2$. Let $H$ be a subgroup $\Sy_5\times\calO$ of $G$ that is in both $\GO_6^+(2)$ and $\N_2[G]$, and let $K=\SU_3(3).(2/\calO)\leqslant\G_2(2)=\SU_3(3).2<G$.
\end{example}

\begin{example}\label{ExaRow19}
Let $G=\Omega_8^+(2)$ or $\GO_8^+(2)$, and let $\calO=G/\Omega_8^+(2)\leqslant\mathrm{C}_2$.
\begin{enumerate}[{\rm (a)}]
\item Let $K$ be a maximal core-free subgroup $\A_9.\calO_2$ of $G$, and let $H$ be a subgroup $2^4{:}\A_5.\calO_1$ that intersects trivially with $K$.
\item Let $K$ be a maximal core-free subgroup $\Sp_6(2).\calO_2$ of $G$, and let $H$ be a subgroup $\Sy_5\times\calO_1$ that intersects trivially with $K$.
\end{enumerate}
In both~(a) and~(b), $H$ stabilizes a totally singular $i$-space for some $i\in\{1,3,4\}$.
\end{example}

\begin{example}\label{ExaRow21}
Let $L=\Omega_8^+(4)$, let $\phi$ be the field automorphism of $L$, and let $\gamma$ be an involutory graph automorphism of $L$.
Let $G$ be almost simple with socle $L$ such that
\[
G/L\in\{\langle\phi\rangle,\langle\phi\rangle\times\langle\gamma\rangle\},
\]
and let $\calO=\mathrm{C}_{|G/L|/2}\leqslant\mathrm{C}_2$. Let $K$ be a maximal core-free subgroup $(\Sp_6(4).2).\calO_2$ of $G$, and let $H$ be a subgroup $(\SL_2(16).4)\times\calO_1$ that intersects trivially with $K$. Such a group $H$ stabilizes a totally singular $i$-space for some $i\in\{1,3,4\}$.
\end{example}

\subsection{Proof Theorem~\ref{ThmExact}}
\ \vspace{1mm}

For classical groups of Lie type, as exact factorizations are necessarily near-exact factorizations, the candidates of exact factorizations are provided by Theorem~\ref{ThmNearExact}.

Exact factorizations of alternating groups and symmetric groups are classified in \cite{WW1980}. We combine this result with~\cite[Tables~7.3--7.4]{Cameron1999} and~\cite[Theorem~D]{LPS1990} to formulate an explicit list of $(G,H,K)$ for alternating groups $L$.

\begin{lemma}\label{Sym-exact}
Let $G$ be an almost simple group with socle $\A_n$, and let $H$ and $K$ be nonsolvable subgroups of $G$. If $G=HK$ is an exact factorization, then $(G,H,K)$ lies in rows~\emph{1--11} of Table~$\ref{TabXia2}$ (with $H$ and $K$ possibly interchanged).
\end{lemma}

To conclude this subsection we give:

\begin{proof}[Proof of Theorem~$\ref{ThmExact}$]
Let $G$ be an almost simple group with socle $L$. If $L$ is a classical group of Lie type, then Theorem~\ref{ThmNearExact} shows that $L$ is one of:
\[
\SL_4(2),\,\SL_4(4),\,\SU_4(4),\,\Omega_8^+(2),\,\Omega_8^+(4),\,\Sp_4(4),\,\Sp_6(2),\,\Sp_6(4),\,\Sp_8(2).
\]
For these groups, computation in \magma~\cite{BCP1997} shows that $G=HK$ is an exact factorization if and only if $(G,H,K)$ lies in rows~12--21 of Table~\ref{TabXia2} (with $H$ and $K$ possibly interchanged).
If $L$ is an alternating group, then Lemma~~\ref{Sym-exact} leads to rows~1--11 of Table~\ref{TabXia2}, and conversely these rows give rise to exact factorizations $G=HK$ by~\cite[Theorems~A~and~S]{WW1980}. For sporadic groups, the exact factorizations are classified in~\cite[Theorem~1.3]{Giudici2006}, which gives $G=\M_{24}$, $H=2^4{:}\A_7$ or $\PSiL_3(4)$, and $K=\PSL_2(23)$, as in rows~22--23 of Table~$\ref{TabXia2}$. Finally, checking the condition $|G|=|H||K|$ for the factorizations $G=HK$ classified in~\cite{HLS1987} shows that $L$ is not an exceptional group of Lie type. This completes the proof.
\end{proof}

\section{Biperfect semisimple Hopf algebras}\label{SecHopf}

Let $\mathbb{F}$ be a field, and let $A$ be a bialgebra over $\mathbb{F}$ with multiplication $m\colon A\otimes A\to A$, unit $u\colon\mathbb{F}\to A$, comultiplication $\Delta\colon A\to A\otimes A$, and counit $\epsilon\colon A\to\mathbb{F}$. If there exists a $\mathbb{F}$-linear map $S\colon A\to A$ such that
\[
m\circ(S\otimes\mathrm{id})\circ\Delta=m\circ(\mathrm{id}\otimes S)\circ\Delta=u\circ\epsilon
\]
(the composition $f\circ g$ means $g$ followed by $f$), then $A$ is called a \emph{Hopf algebra} with \emph{antipode}~$S$.

\subsection{Bicrossproduct Hopf algebra}
\ \vspace{1mm}

Let $G=HK$ be an exact factorization. Then for each $x\in H$ and $y\in K$, there exist a unique element $x\triangleleft y$ in $H$ and a unique element $x\triangleright y$ in $K$ such that
\[
yx=(x\triangleleft y)(x\triangleright y).
\]
This defines a map $\triangleleft\colon H\times K\to H$ and a map $\triangleright\colon H\times K\to K$.
Now let
\[
V=\mathbb{C}[K]^*\otimes\mathbb{C}[H]
\]
be a vector space over $\mathbb{C}$. Define a $\mathbb{C}$-linear map $m\colon V\otimes V\to V$ by letting
\[
m(\varphi\otimes a,\psi\otimes b)=(\varphi(a\cdot\psi))\otimes(ab)\ \text{ for }\,\varphi,\psi\in\mathbb{C}[K]^*\text{ and }a,b\in H,
\]
where $\cdot$ denotes the associated action of $H$ on the algebra $\mathbb{C}[K]^*$ and $\varphi(a\cdot\psi)$ is the multiplication of $\varphi$ and $a\cdot\psi$ in the algebra $\mathbb{C}[K]^*$. More precisely, the associated action of $H$ on $\mathbb{C}[K]^*$ is given by
\[
\mbox{$(a\cdot\psi)(x)=\psi(a^{-1}\cdot x)$ for $a\in H$, $\psi\in\mathbb{C}[K]^*$ and $x\in\mathbb{C}[K]$,}
\]
where the second $\cdot$ denotes the action of $H$ on $\mathbb{C}[K]$, extended by $H$ acting on $K$ by $\rhd$.
Identify the vector spaces
\[
V\otimes V=(\mathbb{C}[K]\otimes\mathbb{C}[K])^*\otimes(\mathbb{C}[H]\otimes\mathbb{C}[H])=\mathrm{Hom}_\mathbb{C}(\mathbb{C}[K]\otimes\mathbb{C}[K],\mathbb{C}[H]\otimes\mathbb{C}[H])
\]
in the canonical way. Then define a $\mathbb{C}$-linear map $\Delta\colon V\to V\otimes V$ by letting
\[
(\Delta(\varphi\otimes a))(b\otimes c)=(\varphi(bc)a)\otimes(a\triangleleft b^{-1})\ \text{ for }\,\varphi\in\mathbb{C}[K]^*,\ a\in H\text{ and }b,c\in K.
\]
With $V$, $m$ and $\Delta$ defined above,
we have the following result (see~\cite[Theorem~2.1]{EGGS2000}):

\begin{theorem}[\cite{Kac1968,Takeuchi1981}]
For each exact factorization $G=HK$, there exists a unique semisimple Hopf algebra structure on $V$ with multiplication $m$ and comultiplication $\Delta$.
\end{theorem}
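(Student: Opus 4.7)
The statement is a special case of the Kac--Takeuchi bicrossproduct construction, and my strategy is to reduce it to the standard theory of matched pairs of groups. Uniqueness is essentially automatic: once $m$ is fixed, the unit must be the multiplicative identity of $(V,m)$, which one checks is $\epsilon_K \otimes 1_H$ where $\epsilon_K \in \mathbb{C}[K]^*$ is the constant function $1$; once $\Delta$ is fixed, the counit is forced to be the linear map $\varphi \otimes a \mapsto \varphi(1_K)$; and the antipode of any Hopf algebra is uniquely determined as the convolution inverse of the identity. So the substantive content lies entirely in existence and semisimplicity.

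For existence, I would first extract the matched pair of groups associated to the factorization. Exactness $H \cap K = 1$ ensures that $\triangleleft$ and $\triangleright$, defined via $yx = (x \triangleleft y)(x \triangleright y)$, are well-defined set-theoretic maps. Applying uniqueness of the factorization to both sides of $y(y'x) = (yy')x$ and $(yx)x' = y(xx')$ yields the matched pair axioms
\begin{align*}
x \triangleleft (yy') &= (x \triangleleft y') \triangleleft y, & x \triangleright (yy') &= ((x \triangleleft y') \triangleright y)(x \triangleright y'), \\
(xx') \triangleleft y &= (x \triangleleft y)(x' \triangleleft (x \triangleright y)), & (xx') \triangleright y &= x' \triangleright (x \triangleright y),
\end{align*}
together with $1 \triangleleft y = 1$ and $x \triangleright 1 = 1$. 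Associativity of $m$ now reduces to associativity of $H$ together with the last identity above, while coassociativity of $\Delta$ reduces to associativity of $K$ together with the first.

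The core of the verification is the bialgebra compatibility $\Delta \circ m = (m \otimes m) \circ (\mathrm{id} \otimes \sigma \otimes \mathrm{id}) \circ (\Delta \otimes \Delta)$, where $\sigma$ is the flip. I expect this to be the main (though routine) obstacle: after expanding on the basis $\{\delta_y \otimes x : y \in K,\, x \in H\}$, both sides become sums indexed by factorizations $y = y_1 y_2$ in $K$, and the equality reduces term-by-term to the two mixed matched pair axioms displayed above. The antipode is then defined by the natural formula inverting $yx = (x \triangleleft y)(x \triangleright y)$, concretely $S(\delta_y \otimes x) = \delta_{(x \triangleleft y^{-1})^{-1} \triangleright y^{-1}} \otimes (x \triangleleft y^{-1})^{-1}$, and its axioms are another direct matched pair calculation. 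Finally, semisimplicity follows from Maschke's theorem for Hopf algebras: the element $\Lambda = \delta_{1_K} \otimes \sum_{x \in H} x$ is a nonzero left integral with $\epsilon(\Lambda) = |H| \neq 0$, which forces semisimplicity in characteristic zero.
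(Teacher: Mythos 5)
There is nothing in the paper to compare your argument against: the paper states this theorem as an imported classical result, citing Kac and Takeuchi and pointing to \cite[Theorem~2.1]{EGGS2000}, and gives no proof of it. Your sketch is, in outline, exactly the standard proof from those sources, and it is sound. The matched-pair identities you display are the correct consequences of uniqueness of the factorization applied to $y(y'x)=(yy')x$ and $(yx)x'=y(xx')$; moreover exactness makes $y\mapsto x\triangleright y$ a bijection of $K$ (it is the right action of $H$ on $H\backslash G\cong K$), which is what you implicitly need so that the $H$-action on $\mathbb{C}[K]^*$ is by algebra automorphisms and the smash product $m$ is associative. Coassociativity of $\Delta$ and the bialgebra compatibility do reduce to the remaining axioms as you say; uniqueness of the unit, counit and antipode is automatic as you note, so uniqueness of the whole structure follows from existence; and your integral $\Lambda=\delta_{1_K}\otimes\sum_{x\in H}x$ satisfies $a\Lambda=\epsilon(a)\Lambda$ precisely because $x\triangleright y=1$ forces $y=1$ by exactness, so $\epsilon(\Lambda)=|H|\neq0$ and the Larson--Sweedler Maschke theorem gives semisimplicity (over $\mathbb{C}$ there is no characteristic issue). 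The one point to handle with care when writing this up against the paper's specific formulas is convention-sensitivity: the paper's $\Delta$ twists by $a\triangleleft b^{-1}$ and leaves the $H$-action on $\mathbb{C}[K]^*$ implicit, so your explicit antipode formula (and the precise form of the compatibility computation) should be re-derived in that convention rather than quoted; since the antipode of a bialgebra is unique if it exists, this is a bookkeeping matter, not a gap.
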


The semisimple Hopf algebra $V$ with multiplication $m$ and comultiplication $\Delta$ is called the \emph{bicrossproduct Hopf algebra} associated with the exact factorization $G=HK$. A Hopf algebra $A$ is said to be \emph{perfect} if there is no nontrivial one-dimensional $A$-module, and said to be \emph{biperfect} if both $A$ and its dual are perfect. It was shown in~\cite[Theorem~2.3]{EGGS2000} that the bicrossproduct Hopf algebra associated with an exact factorization $G=HK$ is biperfect if and only if both $H$ and $K$ are perfect and self-normalizing.

\subsection{Proof of Corollary~\ref{CorBiperfect}}
\ \vspace{1mm}

The following is an immediate consequence of Theorem~\ref{ThmExact}.

\begin{corollary}\label{ThmBiperfect}
Let $G=HK$ be an exact factorization of a finite simple group $G$ with proper subgroups $H$ and $K$. If both $H$ and $K$ are perfect, then, interchanging $H$ and $K$ if necessary, one of the following holds:
\begin{enumerate}[{\rm (a)}]
\item $G=\A_n$ with $n\geqslant60$, $H$ is a perfect group of order $n$, and $K=\A_{n-1}$;
\item $G=\A_{q+1}$ with $q=2^f\geqslant8$, $H=\PSL_2(q)$, and $K=\A_{q-2}$;
\item $(G,H,K)$ lies in row~\emph{6,~7,~9,~19} or~\emph{22} of Table~$\ref{TabXia2}$ with $\calO=1$.
\end{enumerate}
\end{corollary}

Now we can prove Corollary~\ref{CorBiperfect}, which is an answer to a question arising from the work of~\cite{EGGS2000} on constructing biperfect semisimple Hopf algebras.

\begin{proof}[Proof of Corollary~$\ref{CorBiperfect}$]
Let $G$ be a finite simple group. For an exact factorization $G=HK$ with both $H$ and $K$ perfect and self-normalizing, the triple $(G,H,K)$ lies in parts~(a)--(c) of Corollary~\ref{ThmBiperfect} since $H$ and $K$ are perfect. To complete the proof, we only need to show that none of parts~(a)--(c) of Corollary~\ref{ThmBiperfect} is possible for $H$ and $K$ to be both self-normalizing except for row~22 of Table~\ref{TabXia2} with $\calO=1$.

First suppose that part~(a) occurs. Then $H$ is a regular subgroup of $G=\A_n$, and so $\Nor_G(H)=\Hol(H)\cap G$, where $\Hol(H)=H{:}\Aut(H)$ is the holomorph of $H$. Since $\Nor_G(H)=H$, we have
\[
\frac{|H|}{|\Z(H)|}\leqslant|\Aut(H)|=\frac{|H{:}\Aut(H)|}{|H|}=\frac{|\Hol(H)|}{|\Nor_G(H)|}=\frac{|\Hol(H)|}{|\Hol(H)\cap G|}=\frac{|\Hol(H)G|}{|G|}\leqslant\frac{|\Sy_n|}{|\A_n|}=2.
\]
This implies that $\Z(H)\geqslant H'$, which contradicts the condition $H'=H$.

Next suppose that either part~(b) occurs or $(G,H,K)$ lies in row~6,~7 or~9 of Table~\ref{TabXia2} with $\calO=1$. Then $G=\A_n$ and $K=\A_{n-k}$ with $2\leqslant k\leqslant5$. This implies that $\Nor_G(K)=(\A_k\times\A_{n-k}).2$, contradicting the condition $\Nor_G(K)=K=\A_{n-k}$.

Finally, if $(G,H,K)$ lies in row~19 of Table~\ref{TabXia2} with $\calO=1$, then we conclude from~\cite{CCNPW1985} that $\Nor_G(H)=2^6{:}(3\times\A_5).2>H$, again a contradiction.
\end{proof}

\section{Transitive simple subgroups of quasiprimitive permutation groups}\label{SecXia8}

In this section, we characterize finite quasiprimitive permutation groups containing a transitive simple subgroup.
The O'Nan-Scott-Praeger Theorem \cite{Praeger92} divides quasiprimitive permutation groups into eight types.
For our purpose here, we categorize the eight types of quasiprimitive groups $X$  into four classes:
\begin{enumerate}[{\rm(i)}]
\item $\Soc(X)$ is abelian, and $X$ is affine;

\item $X$ has a subnormal regular subgroup, and $X$ is of type HS, HC, SD, CD or TW;

\item $X$ is of product action type;

\item $X$ is an almost simple group.
\end{enumerate}

Our analysis proceeds by a series of lemmas.
The first one is for affine type.

\begin{lemma}\label{HA>simple}
Let $X$ be a quasiprimitive group of affine type, and let $L$ be a transitive simple subgroup of $X$.
Then $X=\AGL_3(2)$, and $L=\PSL_2(7)$.
\end{lemma}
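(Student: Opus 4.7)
The plan is to exploit the standard structure $X=N\rtimes X_0$ of a quasiprimitive group of affine type, where $N\cong\bbF_p^d$ is the elementary abelian regular socle and $X_0\leqslant\GL_d(p)$ is a point stabilizer acting irreducibly on $N$. Since $L$ is nonabelian simple and $N$ is abelian, $L\cap N$ is a normal subgroup of $L$ that cannot equal $L$, so simplicity forces $L\cap N=1$, and the canonical projection $X\to X/N$ restricts to a faithful embedding $L\hookrightarrow X_0\leqslant\GL_d(p)$. In particular, $L$ admits a faithful $d$-dimensional representation over $\bbF_p$. Transitivity of $L$ on the set of $p^d$ points is equivalent to $[L:L_\a]=p^d$, where $L_\a=L\cap X_0$ is the stabilizer of a point $\a$; thus $L$ is a nonabelian simple group with a subgroup of prime-power index~$p^d$.

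I would then invoke Guralnick's classification of nonabelian simple groups admitting a subgroup of prime-power index, which restricts $(L,p^d)$ to four families:
\begin{enumerate}[{\rm(i)}]
\item $L=\A_{p^d}$ with $L_\a=\A_{p^d-1}$;
\item $L=\PSL_n(q)$ with $p^d=(q^n-1)/(q-1)$ and $L_\a$ a point or hyperplane stabilizer;
\item $L\in\{\PSL_2(11),\M_{11},\M_{23}\}$, and $p^d\in\{11,23\}$ is a prime;
\item $L=\PSU_4(2)$ with $p^d=27$.
\end{enumerate}
All but one case is then eliminated using the extra condition that $L$ embeds in $\GL_d(p)$ together with the order divisibility $|L|$ divides $|\AGL_d(p)|$.

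Cases~(iii) force $d=1$, so $L\leqslant\AGL_1(p)$; but $|\AGL_1(p)|=p(p-1)$ is smaller than $|\PSL_2(11)|$, $|\M_{11}|$ and $|\M_{23}|$, giving contradictions. Case~(iv) has $(p,d)=(3,3)$, while the smallest faithful representation of $\PSU_4(2)\cong\PSp_4(3)$ over $\bbF_3$ is the natural symplectic module of dimension~$4$, so $\PSU_4(2)\not\leqslant\GL_3(3)$. Case~(i) yields $\A_{p^d-1}\leqslant\GL_d(p)$, forcing $(p^d-1)!/2\leqslant|\GL_d(p)|<p^{d^2}$, which fails for $p^d\geqslant 5$ and leaves no nonabelian simple $\A_{p^d}$. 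In case~(ii), if $p$ divides $q$ then $(q^n-1)/(q-1)\equiv 1\pmod{p}$, contradicting $p^d=(q^n-1)/(q-1)$, so $p$ is coprime to $q$ and the embedding $L\hookrightarrow\GL_d(p)$ is cross-characteristic. Applying the Landazuri--Seitz--Zalesskii lower bound on faithful cross-characteristic representation degrees of $\PSL_n(q)$ and comparing with $d=\log_p((q^n-1)/(q-1))$ successively rules out every $(n,q,p,d)$ except $(2,7,2,3)$, yielding $L=\PSL_2(7)\cong\GL_3(2)$.

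Finally, I would verify that $L=\PSL_2(7)$ embeds in $X=\AGL_3(2)$ as a transitive subgroup. Via the exceptional isomorphism $\PSL_2(7)\cong\GL_3(2)$, the group $\AGL_3(2)$ has two conjugacy classes of subgroups isomorphic to $\PSL_2(7)$: one is the Levi complement $\GL_3(2)$ fixing the zero vector of $\bbF_2^3$ (hence intransitive), and the other arises from the nontrivial class in $H^1(\GL_3(2),\bbF_2^3)\neq 0$ and realises the $2$-transitive degree-$8$ action of $\PSL_2(7)$ on the projective line over $\bbF_7$. The main obstacle is the infinite family in case~(ii): one must carry out the case analysis cleanly, using the Landazuri--Seitz--Zalesskii bound together with a short direct check for the few $(n,q)$ with $(q^n-1)/(q-1)$ a prime power of small exponent.
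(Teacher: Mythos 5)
Your argument is correct, and its skeleton is the same as the paper's: both identify $\Soc(X)\cong\ZZ_p^d$ as the regular socle, use simplicity to force $L\cap\Soc(X)=1$ and hence a faithful embedding $L\hookrightarrow\GL_d(p)$, and then feed the index-$p^d$ point stabilizer into Guralnick's theorem. (Two minor remarks on your write-up: your Guralnick list is actually the accurate one, the exceptional unitary case being $\PSU_4(2)$ of index $27$ rather than the $\PSU_3(3)$ printed in the paper, and both are excluded identically; and in case~(i) you wrote $\A_{p^d-1}\leqslant\GL_d(p)$ where you mean $\A_{p^d}$, though the order comparison you run still gives the contradiction.) Where you genuinely diverge is the elimination of the family $L=\PSL_n(q)$ with $(q^n-1)/(q-1)=p^d$: the paper notes that the point stabilizer is parabolic, so $L$ is $2$-transitive on the $p^d$ points, and then quotes the classification of $2$-transitive subgroups of affine groups \cite{LPS1987} to land directly on $(L,X)=(\PSL_2(7),\AGL_3(2))$; you instead observe that the embedding is cross-characteristic (since $(q^n-1)/(q-1)\equiv1\pmod p$ when $p\mid q$) and kill everything except $(n,q)=(2,7)$ by the Landazuri--Seitz--Zalesskii degree bounds together with order checks. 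Your route is more self-contained, avoiding a second imported classification on top of Guralnick's, at the price of handling the LSZ exceptional groups and the sporadic solutions of $(q^n-1)/(q-1)=p^d$ with $d\geqslant2$ (such as $\PSL_5(3)$ with $11^2$, dispatched by order) by hand, which you acknowledge; the paper's route is shorter but leans on the cited classification. Your final step via $H^1(\GL_3(2),\bbF_2^3)\neq0$ is also sound: a complement in the second class is fixed-point-free, and since $\GL_3(2)$ has no proper subgroup of index less than $7$, its orbits on the $8$ vectors must form a single orbit, giving transitivity; combined with $X\geqslant\Soc(X)L=\AGL_3(2)$ this recovers the exact conclusion $X=\AGL_3(2)$, which the paper again obtains from the citation.
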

\begin{proof}
Assume that $M=\Soc(X)=\ZZ_p^m$ with $p$ prime.
Then $M$ is regular on $\Omega$, and so $|M|=p^m$ divides $|L|$.
Thus $L$ is a simple group and has a subgroup of index $p^m$, which is the stabilizer of a point.
By \cite[Theorem~1]{Guralnick1983}, $L$ is one of the following groups:
\begin{align*}
&\mbox{$\A_{p^m}$,\,\ $\PSL_d(q)$ with $(q^d-1)/(q-1)=p^m$,\,\ $\PSL_2(11)$ with $p^m=11$,}\\
&\mbox{$\PSU_3(3)$ with $p^m=3^3$,\,\ $\M_{11}$
with $p^m=11$\,\ or $\M_{23}$ with $p^m=23$.}
\end{align*}
In this case, $M$ is the unique minimal normal subgroup of $X$, and hence $L$ acts faithfully on $M=\ZZ_p^m$.
Thus $L\not=\PSL_2(11)$, $\PSU_3(3)$, $\M_{11}$ or $\M_{23}$.
If $L=\A_{p^m}$, then $L\leqslant\GL_m(p)$, which is not possible since $|L|=p^m!/2>|\GL_m(p)|$.
Thus $L=\PSL_d(q)$ with ${q^d-1\over q-1}=p^m$, and so $L$ is 2-transitive on $V$.
That is to say, $L$ is a 2-transitive subgroup of the affine group $\AGL_m(p)$.
By \cite{LPS1987}, we have that $L=\PSL_2(7)$ and $X=\AGL_3(2)$.
\end{proof}

The next lemma is to deal with the second class of groups.

\begin{lemma}\label{SD>simple}
Suppose that $X$ has a subnormal regular subgroup, namely, $X$ is of type {\rm HS, HC, SD, CD} or {\rm TW}, and suppose that $X$ contains a transitive simple group $L$.
Then $X$ is of type $\HS$ or {\rm SD}, $\Soc(X)=L\times L$, and $L$ is regular on $\Omega$.
\end{lemma}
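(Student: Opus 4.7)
\medskip
\noindent\textbf{Proof proposal.}
The plan is to combine the transitivity inequality $|L| \geqslant |\Omega|$ with an upper bound on $|L|$ dictated by the structure of $\Soc(X)$ in each type, excluding every type except HS and SD with socle having exactly two simple factors.

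Write $M = \Soc(X)$; in each of the five types $M \cong T^n$ for some nonabelian simple group $T$ and integer $n \geqslant 2$ (with additional block structure in HC and CD). Since $L$ is simple, $L \cap M$ is normal in $L$ and hence equals $L$ or $1$. In Case~A ($L \leqslant M$), any simple subgroup of $T^n$ is a full diagonal subgroup, so $L \cong T$ and $|L| = |T|$. In Case~B ($L \cap M = 1$), $L$ embeds into $X/M \leqslant \Out(M)$. By Schreier's conjecture $\Out(T)$ is solvable, so in every type the kernel of the natural projection of $L$ onto the top symmetric-group factor of $\Out(M)$ is solvable; simplicity of $L$ then forces this kernel to be trivial, yielding an upper bound of the shape $|L| \leqslant c\cdot n!$ for a small constant $c$ depending on the type.

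Combining these with the standard orbit sizes $|\Omega| = |T|$ in HS, $|T|^{n-1}$ in SD, $|T|^n$ in TW, $|T|^{n/2}$ in HC (where $n = 2k$), and $|T|^{k(m-1)}$ in CD (where $n = km$ with $k, m \geqslant 2$), I would check that each of TW, HC, CD, and SD with $n \geqslant 3$ is impossible: the Case~A bound $|T| \geqslant |\Omega|$ collapses the exponent of $|T|$ in $|\Omega|$ down to $1$, which fails by the definition of the type; and the Case~B bound is polynomial in $n$ while $|\Omega| \geqslant 60^{n/2 - 1}$ grows exponentially in $n$.

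This leaves HS and SD with $n = 2$, so $M = T \times T$ and $|\Omega| = |T|$. Case~B would give the impossible bound $|L| \leqslant 2$; hence Case~A holds, $L$ is a simple subgroup of $T \times T$ of order $|T| = |\Omega|$, and therefore $L$ acts regularly on $\Omega$ with $L \cong T$, giving $M \cong L \times L$.

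The main obstacle I anticipate is Case~B in the HC and CD types, where $\Out(M)$ has a nested wreath structure; some care will be needed to extract the right polynomial-in-$n$ bound on $|L|$ from the Schreier-conjecture argument in those cases. Once that bound is established, the order comparison with $|\Omega|$, and the identification of $L$ as a regular simple factor of $M$ in the surviving case, are routine.
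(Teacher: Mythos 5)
The genuine gap is in your Case~B. From $L\cap\Soc(X)=1$ and Schreier's conjecture you get an embedding $L\hookrightarrow\Sy_n$ (the action on the $n$ simple direct factors), so the best general bound is $|L|\leqslant n!$ — and $n!$ is not polynomial in $n$; it eventually dominates any exponential function of $n$. Already for $n=20$ one has $20!\approx2.4\times10^{18}>60^{10}\approx6\times10^{17}$, so your comparison of $|L|\leqslant c\cdot n!$ against $|\Omega|\geqslant60^{n/2-1}$ gives no contradiction once $n$ is moderately large, and the types TW, HC, CD and SD with many factors are not excluded by this step. The paper closes exactly this case by comparing $r$-parts instead of global orders: transitivity of $L$ forces $|\Omega|=|T|^{e}$ to divide $|L|$, where $e\geqslant n/2$ in every one of the remaining types, so for an odd prime $r$ dividing $|T|$ one gets $r^{n/2}\leqslant r^{e}\leqslant|L|_r\leqslant|\Sy_n|_r<r^{n/(r-1)}\leqslant r^{n/2}$, a contradiction. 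If you replace your ``polynomial versus exponential'' comparison by this Sylow estimate (which is the paper's key computation), your argument goes through and is in substance the same as the paper's proof; as written, the Case~B exclusion fails.

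A smaller inaccuracy: a simple subgroup of $T^n$ need not be a \emph{full} diagonal subgroup, so you cannot assert $L\cong T$ with $|L|=|T|$ at the start of Case~A; what is true (and all you need) is that some coordinate projection is injective on $L$, whence $|L|\leqslant|T|$. This weaker bound still kills TW, HC, CD and SD with $n\geqslant3$ in Case~A, and in the surviving case $n=2$ the isomorphism $L\cong T$ and the regularity of $L$ then follow, as you say, from $|T|=|\Omega|\leqslant|L|\leqslant|T|$. Your final identification of the HS/SD conclusion, and the $|L|\leqslant2$ contradiction in Case~B for $n=2$, are fine.
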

\begin{proof}
Let $M=\Soc(X)$.
Then $M$ is transitive on $\Omega$ since $X$ is quasiprimitive on $\Omega$, and
\[M=T_1\times\dots\times T_m=T^m,\]
where $m\geqslant2$, and $T_i\cong T$ is simple.
Let $N=T^\ell\lhd M$ with $\ell\geqslant1$ be regular on $\Omega$.

Suppose that $\ell\geqslant 2$.
Then $|N|=|T|^\ell$ divides $|L|$ as $L$ is transitive on $\Omega$.
In this case, $\ell\geqslant m/2$ by Praeger' theorem~\cite[Theorem~1]{Praeger92}.
%Without loss of generality, assume that $N=T_1\times\dots\times T_\ell$.
Now $L$ acts faithfully on the set of the direct factors $\{T_1,\dots,T_m\}$, and thus $L\leqslant\Sy_m$.
Let $r$ be an odd prime divisor of $|T|$.
Then $r^\ell$ divides $|N|=|T|^\ell$, and $|L|_r\leqslant|\Sy_m|_r< r^{m\over r-1}$.
Since $|N|$ divides $|L|$, we have that $r^{m\over2}\leqslant r^\ell< r^{m\over r-1}\leqslant r^{m\over2}$, which is a contradiction.

We thus have $\ell=1$, and so $X$ is of type HS or type SD with $M=\Soc(X)=T\times T$.
Moreover, $T\times T\lhd X\leqslant (T\times T).(\Out(T)\times\Sy_2)$.
Since $L$ is a nonabelian simple subgroup of $X$, we have $L< T\times T$.
As $L$ is transitive and $T$ is regular on $\Omega$, it follows that $L\cong T$.
\end{proof}

The third class of groups are dealt with in the following lemma.

\begin{lemma}\label{PA>simple}
Let $X$ be a quasiprimitive permutation group on $\Omega$ of product action type with socle $M$, and let $L\leqslant X$ be transitive and simple. Then one of the following holds:
\begin{enumerate}[{\rm (a)}]
\item
$X$ is primitive of degree $d^2$ such that $M=T\times T$ and $M_\omega=S\times S$, where $(L,T,S,d)$ lies in the following table:
\[
\begin{array}{|llll|} \hline
L & T & S & d \\ \hline
\A_6 & \A_6 & \A_5 & 6\\ \hline
\M_{12} & \M_{12} & \M_{11} &12\\
 & \A_{12} & \A_{11} &\\ \hline
\Sp_4(q) & \Sp_4(q) & \Sp_2(q^2).2 & q^2(q^2-1)/2\\
 \textup{($q$ even)} &\A_d & \A_{d-1} & \\
 &\Sp_{4r}(q^{1/r})& {\rm O}_{4r}^-(q^{1/r})&  \\ \hline
\POm_8^+(q) & \POm_8^+(q) &\Omega_7(q) & q^3(q^4-1)/(2,q-1)\\
 & \A_d &\A_{d-1} &  \\ \hline
\Omega_8^+(2) & \Sp_8(2) & {\rm O}_8^-(2) & 120 \\
\hline
\end{array}
\]
\item $X$ is imprimitive, $M_\omega=(S'\times S').2$, and $(L,T,S,L_\omega,M_\omega)$ lies in the following table:
\[
\begin{array}{|lllll|} \hline
L & T & S & L_\omega& M_\omega \\ \hline
\Sp_4(q) & \Sp_4(q) & \Sp_2(q^2).2 & \D_{2(q^2+1)}&\Sp_2(q^2)^2.2\\
 \textup{($q$ even)} &\Sp_{4r}(q^{1/r})&
 {\rm O}_{4r}^-(q^{1/r}) &\D_{2(q^2+1)} &\Omega_{4r}^-(q^{1/r})^2.2\\ \hline
\Omega_8^+(2) & \Sp_8(2) & {\rm O}_8^-(2) & \PSU_3(3)&\Omega_8^-(2)^2.2\\\hline
\end{array}
\]
\end{enumerate}
\end{lemma}

\begin{proof}
If $X$ is primitive on $\Omega$, then by \cite[Lemma\,4.2]{Baddeley-Praeger-2003}, the lemma is true.

Suppose that $X$ is not primitive on $\Omega$. Let $\calB$ be a maximal imprimitive block system for $X$ acting on $\Omega$. Then $X$ acts faithfully on $\calB$ as $X$ is quasiprimitive on $\Omega$. It follows that $X^\calB$ is a primitive group with a transitive simple subgroup $L^\calB\cong L$.
By \cite[Theorem~1]{Praeger2003} and Lemma~\ref{SD>simple}, either $X^\calB$ has type PA or $X^\calB$ is of type SD with $\Soc(X)=L\times L$.
However, the latter implies that $|\calB|=|L|\geqslant|\Omega|$, contradicting that $\calB$ is an imprimitive block system on $\Omega$.
Thus $X^\calB$ is a primitive group of type PA.
Pick a block $\Delta\in\calB$, and let $d=|\Delta|$.
By \cite[Lemma\,4.2]{Baddeley-Praeger-2003}, we have that $L<M=T_1\times T_2$ with $T_1\cong T_2\cong T$ simple, and $M_\Delta=S_1\times S_2$ with $S_1\cong S_2\cong S$ simple, and $(L,T,S,d)$, as in the table of part~(a).

Let $\omega\in\Delta$.
Then $M_\Delta=L_\Delta M_\omega$.
As $L$ is transitive on $\Omega$, we have $M=LM_\omega$, and so
\[|M_\omega|={|M|\over|L|}|L_\omega|={|M_\Delta|\over|L_\Delta|}|L_\omega|.\]

Suppose that $T_1$ is semiregular on $\Omega$.
Since $L$ is transitive, $|T_1|$ divides $|L|$.
As $L\lesssim T$, we have that $T_1\cong L$ is regular on $\Omega$, which is a contradiction since $X$ is of product action type.
Thus $T_1$ is not semiregular on $\Omega$, and $M_\omega\cap T_1\not=1$.
Similarly, $M_\omega\cap T_2\neq1$.
As $X$ is quasiprimitive, we have $MX_\omega=X$, and $X_\omega$ is transitive on $\{T_1,T_2\}$, so that
\[M_\omega\cap T_1\cong M_\omega\cap T_2.\]
Then $M_\omega\leqslant M_\Delta=S_1\times S_2$, and it follows that $M_\omega\cap T_i=M_\omega\cap S_i$ where $i\in\{1,2\}$. Consequently, $M_\omega\cap S_1\cong M_\omega\cap S_2$.

Noticing that $L\lesssim T$ and $|M_\omega|={|M|\over|L|}|L_\omega|$, we have
\[|M_\omega|={|M|\over|L|}|L_\omega|={|T_1\times T_2|\over|L|}|L_\omega|=|S_1|{|T_1|\over|S_1|}{T_2|\over|L|}|L_\omega|.\]
Thus $|M_\omega|$ is divisible by all prime divisors of $|S|$.
%
% \[{|M_\Delta|\over|M_\o|}={|S_1\times S_2|\over|M_\o|}=|S_1|.{|T_1||T_2|\over|S_1||L|}.\]
%Thus $M_\o\cap S_1$ is a subgroup of $S_1$ of order divisible by ${|T_1||T_2|\over|S_1||L|}$.
As $(M_\omega\cap S_1)\times(M_\omega\cap S_2)\unlhd M_\omega$, we have
\[1\not=M_\omega\cap S_2\unlhd M_\omega/(M_\omega\cap S_1).\]
Let $r$ be a prime divisor of $|S|$.
Then $r$ divides $|M_\omega|$, and so $r$ divides $|M_\omega\cap S_1|=|M_\omega\cap S_2|$ or $|M_\omega/(M_\omega\cap S_1)|$.
It follows that $r$ divides $|M_\omega/(M_\omega\cap S_1)|$, that is, $M_\omega/(M_\omega\cap S_1)$ is isomorphic to a subgroup of $S$ of order divisible by all prime divisors of $|S|$.
Assume that $S$ is a simple group.
By \cite[Theorem~1]{Praeger92},
$M_\omega$ is a
subdirect product of $M_\Delta$, and hence $M_\omega\cap S_i=M_\omega/(M_\omega\cap S_j)$ with $\{i,j\}=\{1,2\}$.
Consequently, $M_\omega=(M_\omega\cap S_1)\times(M_\omega\cap S_2)$,
which leads to a contradiction that $M_\omega=M_\Delta$.
Thus $S$ is not simple, namely,
$S=\Sp_2(q^2).2$, $\GO^-_{4r}(q^{1/r})$ or $\GO^-_8(2)$.
Since $M_\omega$ is a
subdirect product of $M_\Delta$, we deduce that
\[\mbox{$S'=M_\omega\cap S_i\lhd M_\omega/(M_\omega\cap S_j)=S$,}\]
where $\{i,j\}=\{1,2\}$. This implies that
$M_\omega=(S'\times S').2$.
First, let $S'=\Sp_2(q^2)$ or $\Omega^-_{4r}(q^{1/r})$.
Since $|L_\omega|=|M_\omega||L|/|T|^2$,
we calculate that $|L_\omega|=2(q^2+1)$.
Consulting \cite[Table~8.14]{BHR2013},
we deduce that $L_\omega=\D_{2(q^2+1)}$.
Now, let $S=\GO^-_8(2)$.
It follows that $|L_\omega|=|M_\omega||L|/|T|^2=6048$.
Then computation in \magma~\cite{BCP1997} shows that
$L_\omega=\PSU_3(3)$.
This completes the proof.
%Notice that $S\cong\A_5$, $\M_{11}$, $\PSL_2(q^2).2$ with $q$ even, or $\Omega_7(q)$.
%Since $1\not=M_\o\cap S_2\lhd M_\o/(M_\o\cap S_1)$, it follows that $M_\o/(M_\o\cap S_1)$ is not almost simple.
%Thus $M_\o/(M_\o\cap S_1)$ is a non-simple proper subgroup of $S$.
%By \cite[Corollary\,5]{LPS2000}, the only possibility is $S=\A_{d-1}$, and $\A_k\lhd M_\o/(M_\o\cap S_1)\leqslant\Sy_k\times\Sy_{d-1-k}$ such that there is no prime between $k$ and $d-1$.
%%{\color{red}
%This is not possible since $M_\o$ should have a normal subgroup $(M_\o\cap S_1)\times(M_\o\cap S_2)$ with $M_\o\cap S_1\cong M_\o\cap S_2$.
%Therefore, we conclude that $X$ is primitive on $\Omega$.
\end{proof}

The above three lemmas lead to a description of finite quasiprimitive permutation groups with a transitive simple subgroup as follows.

\begin{theorem}\label{qp=as}
Let $X$ be a quasiprimitive permutation group on $\Omega$, and let $L$ be a transitive simple subgroup of $X$.
Then one of the following holds:
\begin{enumerate}[{\rm (a)}]
\item $X=\AGL_3(2)$, and $L=\PSL_2(7)$;

\item $X$ is of type $\HS$ or {\rm SD}, $\Soc(X)=L\times L$, and $L$ is simple and regular on $\Omega$;

\item $X$ is of product action type, and all possibilities are listed in Lemma~$\ref{PA>simple}$;

\item $X$ is almost simple, and either $L\lhd X\leqslant\Aut(L)$, or $X=LX_\a$ is a factorization.
\end{enumerate}
\end{theorem}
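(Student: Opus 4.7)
The plan is to invoke the O'Nan--Scott--Praeger theorem~\cite{Praeger92} classifying finite quasiprimitive permutation groups into eight types, and then to dispatch these according to the four-class grouping (i)--(iv) stated at the start of the chapter, appealing to the lemmas already proved.

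First, I would treat the affine case by direct citation of Lemma~\ref{HA>simple}, which produces conclusion~(a). The class of types $\HS$, $\HC$, $\SD$, $\CD$, $\mathrm{TW}$ (those possessing a subnormal regular subgroup) is dispatched by Lemma~\ref{SD>simple}: its counting argument using the order of $\Soc(X)$ rules out $\HC$, $\CD$, $\mathrm{TW}$ and pins down the remaining configuration $\Soc(X)=L\times L$ with $L$ regular, giving conclusion~(b). The product action case is covered verbatim by Lemma~\ref{PA>simple}, yielding conclusion~(c).

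The only case requiring fresh work is that $X$ is almost simple; write $T=\Soc(X)$, so $T\trianglelefteq X\leqslant\Aut(T)$. Here I would first show that the simple subgroup $L$ is contained in $T$. Since $L\cap T$ is normal in the simple group $L$, either $L\leqslant T$ or $L\cap T=1$; in the latter case $L\cong LT/T$ embeds into $X/T\leqslant\Out(T)$, which is solvable by Schreier's conjecture, contradicting the nonabelian simplicity of $L$. With $L\leqslant T$ in hand, the quasiprimitivity of $X$ forces $T$ to be transitive on $\Omega$, and combined with the transitivity of $L$ this yields $T=LT_\alpha$, hence $X=LX_\alpha$. If $L=T$ we land in the first alternative $L\trianglelefteq X\leqslant\Aut(L)$ of~(d); otherwise $L$ is a proper subgroup of $T$ and $X=LX_\alpha$ is a genuine factorization of the almost simple group $X$, as recorded in the second alternative of~(d).

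All the substantive structural work is absorbed by the preceding lemmas, so there is no real obstacle left; the only delicate point in assembling the theorem is ensuring that the simple factor $L$ is genuinely absorbed by the socle in the almost simple case, and this is handled cleanly by Schreier's conjecture as above. The Praeger-type classification is then simply a bookkeeping device that partitions the argument into the four disjoint cases (a)--(d).
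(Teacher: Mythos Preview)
Your proposal is correct and follows essentially the same approach as the paper: invoke the O'Nan--Scott--Praeger division into the four classes (i)--(iv), cite Lemmas~\ref{HA>simple}, \ref{SD>simple}, and~\ref{PA>simple} for classes (i)--(iii), and handle the almost simple case directly. The paper's own proof is even terser in case~(d): it simply observes that if $L=\Soc(X)$ then $L\lhd X\leqslant\Aut(L)$, and otherwise the transitivity of $L$ immediately gives $X=LX_\alpha$; your Schreier argument establishing $L\leqslant\Soc(X)$ is correct and more informative, but not strictly required for the bare statement of part~(d).
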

\begin{proof}
Let $M$ be the socle of $X$.
If $M$ is not a nonabelian simple group, then one of parts~(a)--(c) of the theorem is satisfied by Lemmas~\ref{HA>simple}--\ref{PA>simple}, respectively.

Assume that $M$ is nonabelian simple.
Then $X$ is almost simple.
If $L=\Soc(X)$, then $L\lhd X\leqslant \Aut(L)$.
If $L$ is not normal in $X$, then $X=LX_\a$ is a factorization.
\end{proof}

Now we are ready to prove Theorem~\ref{qp>simple-gps}, which presents a classification of quasiprimitive groups containing a regular simple group.

\vskip0.1in
{\bf Proof of Theorem~\ref{qp>simple-gps}:}

Let $X$ be a quasiprimitive permutation group on $\Omega$, and let $L\leqslant X$ be a nonabelian simple group which is regular on $\Omega$.
Let $M=\Soc(X)$, and $\omega\in \Omega$.
Then $X$ and $L$ satisfy Theorem~\ref{qp=as}.
Moreover, as $L$ is regular on $\Omega$, $X$ is neither affine nor of product action type.

If $X$ and $L$ satisfy part~(b) of Theorem~\ref{qp=as}, then $X$ is of type HS or SD.
Thus the stabilizer $M_\omega\cong L$ acts on $L$ by conjugation, and $L$ is a regular subgroup, as in Theorem~\ref{qp>simple-gps}~(b).

Assume that $X$ is an almost simple group.
If $L\lhd X$, then $X\leqslant\Aut(L)$, as in part~(a).
Assume further that $L$ is not normal in $X$.
Then $X=LX_\a$ is a core-free exact factorization with one factor $L$ being nonabelian simple.
Such a factorization $X=LX_\a$ is classified in Theorem~\ref{ThmExact} for $X_\a$ nonsolvable and \cite[Theorem\,3]{BL2021} for $X_\a$ solvable, which leads to part~(c) of Theorem~\ref{qp>simple-gps}.
\qed

\section*{Acknowledgments}

The authors would like to thank the anonymous referee for careful reading and valuable suggestions to this paper. The first author acknowledges the support of NNSFC grants no.~11771200 and no.~11931005. The second author acknowledges the support of NNSFC grant no.~12061083, NSFC grant of Yunnan Province no.~202101AT070023, and Scientific research and innovation fund of Yunnan University no.~ST20210105.


\begin{thebibliography}{}

% \bibitem{AB2015}
% S. H. Alavi and T. C. Burness,
% Large subgroups of simple groups,
% \emph{J. Algebra}, 421 (2015), 187--233.

\bibitem{AJL1983}
H. H. Andersen, J. J{\o}rgensen and P. Landrock,
The projective indecomposable modules of $\SL_2(p^n)$, \emph{Proc. Lond. Math. Soc. (3)}, 46 (1983), no. 1, 38--52.

\bibitem{Baddeley-Praeger-2003}
R. Baddeley and C.E. Praeger,
On primitive overgroups of quasiprimitive permutation groups.
{\it J. Algebra} 263 (2003), no. 2, 294--344.

% \bibitem{Baumeister2005}
% B. Baumeister,
% Primitive permutation groups of unitary type with a regular subgroup,
% \emph{Bull. Belg. Math. Soc. Simon Stevin}, 12 (2005), no. 5, 657--673.

% \bibitem{Baumeister2007}
% B. Baumeister,
% Primitive permutation groups with a regular subgroup,
% \emph{J. Algebra}, 310 (2007), no. 2, 569--618.

\bibitem{BCP1997}
W. Bosma, J. Cannon and C. Playoust,
The magma algebra system I: The user language,
\emph{J. Symbolic Comput.}, 24 (1997), no. 3-4, 235--265.

\bibitem{BHR2013}
J. N. Bray, D. F. Holt and C. M. Roney-Dougal,
\emph{The maximal subgroups of the low-dimensional finite classical groups}, Cambridge University Press, Cambridge, 2013.

\bibitem{BL2021}
T. C. Burness and C. H. Li,
On solvable factors of almost simple groups,
\emph{Adv. Math.}, 377 (2021), 107499, 36 pp.

\bibitem{Cameron1999}
P. J. Cameron,
\emph{Permutation groups}, Cambridge University Press, Cambridge, 1999.

% \bibitem{CK1979}
% P. J. Cameron and W. M. Kantor,
% $2$-transitive and antiflag transitive collineation groups of finite projective spaces,
% \emph{J. Algebra}, 60 (1979), no. 2, 384--422.

\bibitem{CCNPW1985}
J. H. Conway, R. T. Curtis, S. P. Norton, R. A. Parker and R. A. Wilson,
\emph{Atlas of finite groups: maximal subgroups and ordinary characters for simple groups}, Clarendon Press, Oxford, 1985.

\bibitem{EG2000}
P. Etingof and S. Gelaki,
The classification of triangular semisimple and cosemisimple Hopf algebras over an algebraically closed field,
\emph{Internat. Math. Res. Notices}, 2000, no. 5, 223--234.

\bibitem{EGGS2000}
P. Etingof, S. Gelaki, R. Guralnick and J. Saxl,
Biperfect Hopf algebras,
\emph{J. Algebra}, 232 (2000), no. 1, 331--335.

%\bibitem{Feit1979}
%W. Feit, Some consequences of the classification of finite simple groups, in \emph{The Santa Cruz Conference on Finite Groups (Univ. California, Santa Cruz, Calif., 1979)}, pp. 161--173.

% \bibitem{FA1987}
% E. Fisman and Z. Arad,
% A proof of Szep's conjecture on nonsimplicity of certain finite groups,
% \emph{J. Algebra},  108 (1987),  no. 2, 340--354.

% \bibitem{FGS1993}
% M. Fried, R. Guralnick and J. Saxl,
% Schur covers and Carlitz's conjecture,
% \emph{Israel J. Math.}, 82 (1993), no. 1--3, 157--225.

\bibitem{Gorenstein1982}
D. Gorenstein, \emph{Finite Simple Groups}, Plenum Publishing Corp., New York, 1982.

% \bibitem{Gentchev1986II}
% T. R. Gentchev,
% Factorizations of the groups of Lie type of Lie rank 1 or 2,
% \emph{Arch. Math. (Basel)},  47  (1986),  no. 6, 493--499.

\bibitem{Giudici2006}
M. Giudici,
Factorisations of sporadic simple groups,
\emph{J. Algebra}, 304 (2006), no. 1, 311--323.

\bibitem{GGP}
M. Giudici, S. P. Glasby and C. E. Praeger,
Subgroups of Classical Groups that are Transitive on Subspaces,
https://arxiv.org/abs/2012.07213.

% \bibitem{GM2012}
% R. M. Guralnick, G. Malle,
% Products of conjugacy classes and fixed point spaces,
% \emph{J. Amer. Math. Soc}. {\bf25} (2012) 77--121.

\bibitem{Guralnick1983}
 R. M. Guralnick,
 Subgroups of prime power index in a simple group,
 \emph{J. Algebra}, 81 (1983), no. 2, 304--311.

% \bibitem{GM2012}
% R. M. Guralnick and G. Malle,
% Products of conjugacy classes and fixed point spaces,
% \emph{J. Amer. Math. Soc.}, 25 (2012), no. 1, 77--121.

% \bibitem{GPPS1999}
% R. Guralnick, T. Penttila, C. E. Praeger and J. Saxl,
% Linear groups with orders having certain large prime divisors,
% \emph{Proc. London Math. Soc. (3)}, 78 (1999), 167--214.

% \bibitem{GS1995}
% R. M. Guralnick and J. Saxl,
% Monodromy groups of polynomials,
% in \emph{Groups of Lie type and their geometries (Como, 1993)}, 125--150, London Math. Soc. Lecture Note Ser., 207, Cambridge Univ. Press, Cambridge, 1995.

% \bibitem{Hering1985}
% C. Hering,
% Transitive linear groups and linear groups which contain irreducible subgroups of prime order, II,
% \emph{J. Algebra}, 93 (1985), no. 1, 151--164.

\bibitem{HLS1987}
C. Hering, M. W. Liebeck and J. Saxl,
The factorizations of the finite exceptional groups of Lie type,
\emph{J. Algebra}, 106 (1987), no. 2, 517--527.

% \bibitem{HH1992}
% P. N. Hoffman and J. F. Humphreys,
% \emph{Projective representations of the symmetric groups}, Clarendon Press, Oxford, 1992.

% \bibitem{Ito1953}
% N. It\^{o},
% On the factorizations of the linear fractional group $LF(2,p^n)$,
% \emph{Acta Sci. Math. Szeged},  15 (1953), 79--84.

% \bibitem{Ito1955}
% N. It\^{o}, \"Uber das produkt von zwei abelschen gruppen,
% \emph{Math. Z.}, 62 (1955) no. 1, 400--401.

% \bibitem{CKRR95}
% {\sc C. Jansen, K. Lux, R. Parker, R. Wilson}, (1995). An {\sc Atlas} of Brauer characters. Appendix 2. by T. Breuer and S. Norton. London Mathematical Society Monographs. New Series, 11. Oxford Science Publications. New York: The Clarendon Press, Oxford University Press, xviii + 327 pp.
% ISBN: 0-19-851481-6.

% \bibitem{Jones2002}
% G. Jones,
% Cyclic regular subgroups of primitive permutation groups,
% \emph{J. Group Theory}, 5 (2002), no. 4, 403--407.

\bibitem{Kac1968}
G. I. Kac,
Extensions of groups to ring groups,
\emph{Math. USSR. Sb.}, 5 (1968), no. 3.

%\bibitem{Kantor1982}
%W. M. Kantor, Some consequences of the classification of finite simple groups, in \emph{Finite groups---coming of age (Montreal, Que., 1982)}, pp. 159--173.

% \bibitem{Kantor}
% W. M. Kantor,
% Antiflag transitive collineation groups,
% https://arxiv.org/pdf/1806.02203.

% \bibitem{Kegel1961}
% O. H. Kegel,
% Produkte nilpotenter gruppen,
% \emph{Arch. Math. (Basel)}, 12 (1961), no. 1, 90--93.

% \bibitem{Kleidman1987}
% P. B. Kleidman,
% The maximal subgroups of the finite $8$-dimensional orthogonal groups $P\Omega_8^+(q)$ and of their automorphism groups,
% \emph{J. Algebra}, 110 (1987), no. 1, 173--242.

\bibitem{KL1990}
P. B. Kleidman and M. W. Liebeck,
\emph{The subgroup structure of the finite classical groups}, Cambridge University Press, Cambridge, 1990.

% \bibitem{Knapp1995}
% W. Knapp,
% On Burnside's method,
% \emph{J. Algebra},  175  (1995),  no. 2, 644--660.

% \bibitem{Li2003}
% C. H. Li,
% The finite primitive permutation groups containing an abelian regular subgroup,
% \emph{Proc. London Math. Soc.},  87 (2003), no. 3, 725--747.

% \bibitem{Li2006}
% C. H. Li, Finite edge-transitive Cayley graphs and rotary Cayley maps,
% \textit{Tran. Amer. Math. Soc.}, 358 (2006), no. 10, 4605--4635.

% \bibitem{LP2008}
% C. H. Li and J. Pan, Finite 2-arc-transitive abelian Cayley graphs,
% \emph{European J. Combin},  29  (2008),  no. 1, 148--158.

% \bibitem{LWX(1)}
% C. H. Li, L. Wang and B. Xia,
% Factorizations of almost simple groups with nonsolvable factors,
% \emph{in preparation}.

\bibitem{LWX}
C. H. Li, L. Wang and B. Xia,
The exact factorizations of almost simple groups,
https://arxiv.org/abs/2012.09551v2.

% \bibitem{LX2019}
% C. H. Li and B. Xia,
% Factorizations of almost simple groups with a factor having many nonsolvable composition factors,
% \emph{J. Algebra}, 528 (2019), 439--473.

\bibitem{LX2022}
C. H. Li and B. Xia,
Factorizations of almost simple groups with a solvable factor, and Cayley graphs of solvable groups,
\emph{Mem. Amer. Math. Soc.}, 279 (2022), no. 1375

% \bibitem{Liebeck1985}
% M. W. Liebeck,
% On the orders of maximal subgroups of the finite classical groups,
% \emph{Proc. London Math. Soc. (3)}, 50 (1985), no. 3, 426--446.

\bibitem{LPS1987}
M. W. Liebeck, C. E. Praeger and J. Saxl,
A classification of the maximal subgroups of the finite alternating and symmetric groups,
{\it J. Algebra}, {\bf 111} (1987), 365-383.

\bibitem{LPS1990}
M. Liebeck, C. E. Praeger and J. Saxl,
The maximal factorizations of the finite simple groups and their automorphism groups,
\emph{Mem. Amer. Math. Soc.},  86  (1990),  no. 432.

\bibitem{LPS1996}
M. W. Liebeck, C. E. Praeger and J. Saxl, On factorizations of almost simple groups,
\emph{J. Algebra}, 185 (1996), no. 2, 409--419.

% \bibitem{LPS2000}
% M. W. Liebeck, C. E. Praeger and J. Saxl, Transitive subgroups of primitive permutation groups,
% \emph{J. Algebra}, 234 (2000), no. 2, 291--361.

\bibitem{LPS2010}
M. W. Liebeck, C. E. Praeger and J. Saxl, Regular subgroups of primitive permutation groups,
\emph{Mem. Amer. Math. Soc.}, 203 (2010),  no. 952.

\bibitem{LS2012}
M. W. Liebeck and G. M. Seitz,
\emph{Unipotent and nilpotent classes in simple algebraic groups and Lie algebras}, American Mathematical Society, Providence, RI, 2012.

\bibitem{Praeger92}
C. E. Praeger, An O'Nan-Scott theorem for finite quasiprimitive permutation groups and
an application to $2$-arc transitive graphs,
\emph{J. London. Math. Soc.} {\bf47} (1992) 227--239.

% \bibitem{Praeger-normal-Cay}
% C. E. Praeger, Finite normal edge-transitive Cayley graphs.
% {\it Bull. Austral. Math. Soc.} {\bf 60} (1999), 207--220.

\bibitem{Praeger1990}
C. E. Praeger,
The inclusion problem for finite primitive permutation groups,
\emph{Proc. London Math. Soc. (3)} 60 (1990), no. 1, 68--88.

\bibitem{Praeger2003}
C. E. Praeger,
Quotients and inclusions of finite quasiprimitive permutation groups,
\emph{J. Algebra} 269 (2003), no. 1, 329--346.

% \bibitem{TG1985}
% K. B. Tachakerian and T. R. Gentchev,
% Factorizations of the groups $G_2(q)$,
% \emph{Arch. Math. (Basel)},  44  (1985),  no. 3, 230--232.

\bibitem{Takeuchi1981}
M. Takeuchi,
Matched pairs of groups and bismash products of Hopf algebras,
\emph{Comm. Algebra}, 9 (1981), no. 8, 841--882.

%\bibitem{Tutte}
%W. T. Tutte, A family of cubical graphs, \emph{Proc. Cambridge Philos. Soc.} {\bf 43}(1947), 459--474.

%\bibitem{Trofimov}
%V. I. Trofimov, Graphs with projective suborbits.  Exceptional cases of characteristic 2. I, {\em Izvestiya Mathematics} {\bf 62}(1998), 1221--1279.

%\bibitem{Weiss}
%R. Weiss, $s$-transitive graphs, Algebraic methods in graph theory, \emph{Colloq. Soc. Janos Bolyai} {\bf  25}(1981), 827-847.

\bibitem{WW1980}
J. Wiegold and A. G. Williamson, The factorisation of the alternating and symmetric group,
\emph{Math. Z.}, 175  (1980), no. 2, 171--179.

% \bibitem{Wielandt1979}
% H. Wielandt,
% Zusammengesetzte Gruppen: H\"{o}lders Programm heute,
% in \emph{The Santa Cruz Conference on Finite Groups (Univ. California, Santa Cruz, Calif., 1979)}, pp. 161--173.

% \bibitem{Wielandt1958}
% H. Wielandt,
% \"Uber produkte von nilpotenten gruppen,
% \emph{Illinois J. Math.}, 2 (1958) 611--618.

% \bibitem{Wielandt1964}
% H. Wielandt,
% \emph{Finite permutation groups}, Academic Press, New Yok-London, 1964.

% \bibitem{Wilson2009}
% R. Wilson,
% \emph{The finite simple groups}, Springer, 2009.

% \bibitem{Xu-normal-Cay}
% M. Y. Xu,  Automorphism groups and isomorphisms of Cayley digraphs.
% {\it Disc. Math}. {\bf 182} (1998), 309--319.

\bibitem{Zsigmondy1892}
K. Zsigmondy,
Zur Theorie der Potenzreste
\emph{Monatsh. Math. Phys.}, 3 (1892), no. 1, 265--284.

\bibliographystyle{100}

\end{thebibliography}
\end{document}